\newcommand{\address}[1]{\gdef\@address{#1}}
\newcommand{\email}[1]{\gdef\@email{\url{#1}}}
\newcommand{\@endstuff}{\par\vspace{\baselineskip}\noindent\small
\begin{tabular}{@{}l}\scshape\@address\\\textit{E-mail address:} \@email\end{tabular}}
\author{Damien Junger\footnote{This work has been written in a great part during the author PhD thesis at the ENS Lyon. His work are currently funded by the Deutsche Forschungsgemeinschaft (DFG, German Research Foundation) under Germany's Excellence Strategy EXC 2044–390685587, Mathematics Münster: Dynamics–Geometry–Structure.}}
\address{Mathematisches Institut, Universität Münster,\\ Fachbereich Mathematik und Informatik der Universität Münster,  Orléans-Ring 10, 48149 Münster, Germany.}
\email{djunger@uni-muenster.de}
\title{Équations pour le premier revêtement de l'espace symétrique de Drinfeld}
\newtheorem{theointro}{Th\'eor\`eme}
\newtheorem{corointro}[theointro]{Corollaire}
\newtheorem{theo}{Th\'eor\`eme}[section]
\newtheorem{lem}[theo]{Lemme}
\newtheorem{coro}[theo]{Corollaire}
\newtheorem{prop}[theo]{Proposition}
\theoremstyle{definition}
\newtheorem{defi}[theo]{D\'efinition}
\theoremstyle{remark}
\newtheorem{rem}[theo]{Remarque}
\newtheorem{ex}[theo]{Exemple}
\newtheorem{notation}[theo]{Notation}
\DeclareMathOperator{\spec}{Spec}
\DeclareMathOperator{\spf}{Spf}
\DeclareMathOperator{\spg}{Sp}
\DeclareMathOperator{\gln}{GL}
\DeclareMathOperator{\pgln}{PGL}
\DeclareMathOperator{\modut}{-Mod}
\DeclareMathOperator{\hhh}{H}
\DeclareMathOperator{\en}{End}
\DeclareMathOperator{\aut}{Aut}
\DeclareMathOperator{\id}{Id}
\DeclareMathOperator{\nrm}{Nrm}
\DeclareMathOperator{\pic}{Pic}
\DeclareMathOperator{\ord}{ord}
\DeclareMathOperator{\lie}{Lie}
\DeclareMathOperator{\proj}{Proj}
 \newcommand{\iso}{\stackrel{\sim}{\fl}}
\font\tengoth=eufb10
\font\sevengoth=eufb7
\font\fivegoth=eufb5
\def\B{{\mathbb{B}}}
\def\F{{\mathbb{F}}}
\def\G{{\mathbb{G}}}
\def\H{{\mathbb{H}}}
\def\P{{\mathbb{P}}}
\def\Q{{\mathbb{Q}}}
\def\Z{{\mathbb{Z}}}
\def\BC{{\mathcal{B}}}
\def\CC{{\mathcal{C}}}
\def\FC{{\mathcal{F}}}
\def\HC{{\mathcal{H}}}
\def\IC{{\mathcal{I}}}
\def\GC{{\mathcal{G}}}
\def\MC{{\mathcal{M}}}
\def\OC{{\mathcal{O}}}
\def\PC{{\mathcal{P}}}
\def\TC{{\mathcal{T}}}
\def\UC{{\mathcal{U}}}
\def\XG{{\mathfrak{X}}}
\def\mG{{\mathfrak{m}}}
\def\pG{{\mathfrak{p}}}
\def\Ff{{\mathscr{F}}}
\def\Lf{{\mathscr{L}}}
\def\Of{{\mathscr{O}}}
\def\Pf{{\mathscr{P}}}
\def\bar#1{\overline{#1}}
\def\het#1{{\rm H}^{#1}_{\rm ét}}
\def\hgal#1{{\rm H}^{#1}_{\rm Gal}}
\def\et{\text{ et }}
\def\si{\text{Si }}
\def\sinon{\text{Sinon }}
\def\and{\text{ and }}
\def\fl{\rightarrow}
\def\fln#1#2{\xrightarrow[#2]{#1}}
\def\flinj{\hookrightarrow}
\def\fla{\mapsto}
\def\limp{\varprojlim}
\def\som#1#2#3{\sum\limits_{{\substack{#2}}}^{#3}{#1}}
\def\pro#1#2#3{\prod\limits_{{\substack{#2}}}^{#3}{#1}}
\def\uni#1#2#3{\bigcup\limits_{{\substack{#2}}}^{#3}{#1}}
\begin{document}

\maketitle

\begin{abstract}
The goal of this work is to study some aspects of the geometry of the first cover $\Sigma^1$ in the Drinfeld tower over $\mathbb{H}^d_K$ the Drinfeld symmetric space over $K$ a finite extension of $\mathbb{Q}_p$. It is a cyclic étale cover of order prime to $p$ and even of Kummer type from the vanishing of the Picard group of $\mathbb{H}^d_K$ shown in a previous work of the author. It is then completely described by a certain class of invertible functions on $\mathbb{H}^d_K$ via the Kummer exact sequence and the main result of this article gives an explicit description of this class thus providing "equations" for $\Sigma^1$. This statement extends and uses crucially the local description over a vertex obtained by Wang (and originally by Teitelbaum in dimension 1). One of the main consequence of our global equation is the description of invertible functions of $\Sigma^1$ in terms of the invertible functions of $\mathbb{H}^d_K$.
\end{abstract}

\tableofcontents
 
\section*{Introduction}

Soit $p$ un nombre premier, 
$K$ une extension finie de $\Q_p$.
Nous nous proposons  d'étudier certains aspects de la géométrie de l'espace symétrique de Drinfeld $\H^d_K$ sur $K$ de dimension $d$ et de ses rev\^etements construits par Drinfeld dans l'article monumental \cite{dr2}. 
L'étude de la cohomologie de ces revêtements s'est révélée fondamentale pour établir et comprendre les correspondances de Jacquet-Langlands et de Langlands locale et a fait l'objet d'une littérature conséquente dont les articles \cite{dr1, cara3, harrtay, falt, fargu, dat1, dat0, mied, Scholze1} ... en constituent une liste non exhaustive.

Toutefois leur géométrie reste assez mystérieuse et est très mal comprise. Le résultat principal de cet article parvient tout de même à donner une description explicite au premier revêtement et fournit une équation globale à cet espace. Notons que la géométrie se complexifie grandement pour les autres revêtements mais elle reste raisonnable en niveau $1$. Par exemple, son groupe de galois est cyclique d'ordre premier à $p$ ce qui n'est pas le cas en niveau supérieur. Une autre propriété essentielle, uniquement valable en niveau $1$, provient de l'interprétation modulaire de\footnote{avec $\breve{K}= \widehat{K^{nr}}$ une compl\'etion de l'extension maximale non ramifi\'ee de $\bar{K}$.}  $\H_{\breve{K}}^d:=\H^d_K\hat{\otimes} \breve{K}$. Plus précisément, on a un modèle entier semi-stable\footnote{avec $\OC_{\breve{K}}$ l'anneau des entiers de $\breve{K}$.} $\H_{\OC_{\breve{K}}}^d$ qui représente un problème modulaire et on note $\XG/\H_{\OC_{\breve{K}}}^d$ le groupe formel universel associé. Ce dernier est muni d'une action de $\OC_D$ l'ordre maximal  de l'algèbre à division $D$ d'invariant $1/(d+1)$ sur $K$ et $\Sigma^1$ est la fibre générique de $\XG[\Pi_D]\backslash \{0\}$ où $\Pi_D$ est une uniformisante de $D$ et $\XG[\Pi_D]$ sont les points de $\Pi_D$-torsions de $\XG$. L'observation fondamentale consiste à voir que $\XG[\Pi_D]$ est un sch\'ema en $\F_p$-espace vectoriel de Raynaud qui est une   
classe de schéma en groupe dont on connait une classification \cite{Rayn}. Ce résultat permet de comprendre la fibre sp\'eciale de $\XG[\Pi_D]$ et fait l'objet des articles \cite{teit2} et \cite[sections 2.3, 2.4]{wa}. Nous nous en servons ici pour décrire $\Sigma^1$.

Avant d'énoncer le r\'esultat, nous introduisons quelques notations. 
Si $X$ est un espace rigide, $u$ une section inversible sur $X$ et $N$ un entier premier \`a $p$, on note \[X(u^{1/N}):=\underline{\spec}_X(\Of_X[T]/(T^N-u))\] le rev\^etement de $X$ de Kummer d'ordre $N$ associ\'e \`a $u$ que l'on peut voir comme une classe\footnote{En particulier, $\kappa :\Of_X^*(X)\to \het{1}(X,\mu_N)$ corresponds à la flèche de bord dans la suite exacte longue de Kummer.} $\kappa(u)\in\het{1}(X,\mu_N)$.

Rappelons que $\H_K^d$ est le complémentaire des hyperplans $K$-rationnels dans l'espace projectif rigide de dimension $d$. En particulier, on a un recouvrement admissible affinoide croissant\footnote{où $C=\widehat{\bar{K}}$ une compl\'etion d'une clôture alg\'ebrique de $K$.}   $\H_C^d(:=\H_K^d\hat{\otimes}C)= \bigcup_n \bar{U}_n$ (voire m\^eme Stein d'après \cite[§1, Proposition 4]{scst}) et
\[  \bar{U}_n =\{z\in\P^d_{ C} : |\varpi|^{-n} \ge |\frac{\tilde{a}_0z_0+\cdots \tilde{a}_dz_d}{\tilde{b}_0z_0+\cdots \tilde{b}_dz_d}|\ge |\varpi|^n,\forall a\neq b \in \P^d(\OC_K/ \varpi^{n+1} \OC_K)\} \] où $\tilde{a}$, $\tilde{b}$ sont des relevés $a,b\in \P^d(\OC_K/ \varpi^n \OC_K)$ dans $K^{d+1}$ par des vecteurs unimodulaires ie. $\max_i | a_i | =\max_i | b_i |=1$ (ne dépend pas du choix des relevés par ultramétrie).

Le résultat principal est le suivant : 

\begin{theointro}
\label{theointroeq}
Soit $N=q^{d+1}-1=(q-1)\tilde{N}$. On a \[\Sigma^1_C(:=\Sigma^1\hat{\otimes}C)= \H_C^d((u^{q-1})^{1/N})\] pour tout $u \in \Of^{*}(\H_C^d)$ tel que
\begin{equation}\label{eq:ucong}
 u|_{\bar{U}_n} \equiv  \prod_{a \in \P^d(\OC_K/ \varpi^{n+1} \OC_K)} \left( \frac{\tilde{a}_0z_0+\cdots \tilde{a}_dz_d}{z_0}\right)^{q^{n}} \pmod{ \Of^*(\bar{U}_n)^{\tilde{N}}} 
\end{equation} 
o\`u $\tilde{a}$ sont des relevés $a$ 
 dans $K^{d+1}$ par des vecteurs unimodulaires.

\end{theointro}  

\begin{rem}
Fixons une fonction $u$ vérifiant les congruences du théorème principal.
\begin{itemize}
\item Nous utiliserons de mani\`ere cruciale les r\'esultats de \cite{J1} \`a savoir l'annulation du groupe de Picard de $\H_C^d$ et l'isomorphisme entre sections inversibles et repr\'esentation de Steinberg. La suite exacte de Kummer entraine en particulier la surjectivité de la flèche  $\kappa$.  Par calcul explicite des fonctions inversible sur $\H_C^d$, on obtient \[\het{1}(\H_C^d, \mu_N)^{\gln_{d+1}(\OC_K)}= \langle \kappa(u^{q-1}) \rangle_{\Z/N\Z\modut}\] et on utilise la description locale du premier revêtement montrée dans \cite[Lemme 2.3.7.]{wa} pour voir que $\Sigma^1_C$ correspond bien au générateur $\kappa(u^{q-1})$. 
\item On voit que $\Sigma^1_C$ est un  revêtement de type Kummer pour une fonction inversible de la forme $u^{q-1}$ et nous nous en servons pour donner une nouvelle preuve du résultat classique qui affirme que le premier revêtement a $q-1$ composantes connexes géométriques (cf \ref{proppio0} et \ref{coropi0})

\item Pour simplifier l'énoncé du théorème précédent, nous avons étendu les scalaires à $C$ mais $\Sigma^1/\breve{K}$ est encore un revêtement type Kummer associé à une fonction que nous pouvons calculer.

\item Par construction, la fonction $u$ introduite dans l'énoncé du théorème est invariante sous l'action de $\gln_{d+1}(\OC_K)$ dans $\Of^{*}(\H_C^d)/\Of^{*}(\H_C^d)^N$. Cette condition est nécessaire à l'existence d'une action de $\gln_{d+1}(\OC_K)$ sur $\Sigma^1_C$ qui commute au revêtement. Mais, une telle action n'est pas unique et chacune se déduit  de l'autre en tordant par un caractère de $\gln_{d+1}(\OC_K)$ (cf \ref{propgequiv} et \ref{remequiv}). Un travail supplémentaire doit être effectué pour déterminer le caractère associé à l'action provenant de l'interprétation modulaire de $\Sigma^1_C$. 
\end{itemize}

\end{rem}

Nous terminerons par deux applications. Dans un premier temps, nous calculerons les sections inversibles du premier rev\^etement. Pour \'enoncer le r\'esultat, reprenons la description de $\Sigma^1_C$ comme revêtement de type Kummer et donnons-nous $u\in \Of^*(\H^d_C)$ vérifiant les congruences \eqref{eq:ucong} ainsi que $t \in \Of^*(\Sigma^1_C)$ tel que $t^N=  u^{q-1}$. On note $t_0:= \frac{t^{\tilde{N}}}{u}$ et on d\'efinit $\Lf_a$, pour tout $a$ dans $\mu_{q-1}(C)$,  le polyn\^ome interpolateur de Lagrange associ\'e \`a $a$ i.e. l'unique polyn\^ome unitaire de degr\'e $q-2$ qui vaut $1$ en $a$ et $0$ sur $\mu_{q-1}(C) \setminus \{a \}$. 

\begin{theointro}\label{theointrosigst} 
Toute fonction inversible $v$ de $\Sigma^1_C$ admet une unique \'ecriture \[v= \sum_{a\in \mu_{q-1}(C)} t^{j_a}v_a \Lf_a (t_0)\] o\`u $(v_a)_a$ sont des sections inversibles de $\H^d_{C}$ et $0\le j_a\le  \tilde{N}-1$. 
\end{theointro} 

\begin{rem}

\begin{itemize}
\item Les sections inversibles de $\H^d_{C}$ ont été calculées dans \cite[Théorème 2.1.]{J2} ou \cite[Théorème 7.1.]{J1} et le théorème précédent décrit bien explicitement les sections inversibles de $\Sigma^1_C$.
\item Les fonctions $\Lf_a (t_0)$ introduites précédemment sont en fait les idempotents associés aux composantes connexes de $\Sigma^1_C$. Le théorème précédent affirme alors que, sur chaque composante, les fonctions inversibles sur le premier revêtement sont engendrées par $t$ et les fonctions de la base $\H^d_C$.
\end{itemize}

\end{rem}

Pour prouver le r\'esultat, on étudie $\nrm: \Of^*(\Sigma^1_C) \to \Of^*(\H_C^d)$ la norme galoisienne du revêtement et on se ram\`ene \`a prouver que $\ker(\nrm)= (\mu_{N}(C))^{\pi_0(\Sigma^1)}$. Pour cela, on utilise un analogue du principe du maximum pour \'etudier les sections inversibles sur chaque sommet de l'immeuble de Bruhat-Tits. Ici le r\'esultat peut \^etre \'etabli par calcul direct car on a un mod\`ele lisse dont la fibre sp\'eciale est explicite.

Enfin, nous donnons aussi une description précise de $\XG[\Pi_D]$. 
L'\'enonc\'e du th\'eor\`eme principal \ref{theointroeq} d\'ecrit un ouvert de la fibre g\'en\'erique et les articles \cite{teit2} et \cite[sections 2.3, 2.4]{wa} donnent la fibre sp\'eciale gr\^ace \`a la classification des sch\'emas de Raynaud. Ces deux donn\'ees caract\'erisent enti\`erement $\XG[\Pi_D]$. En particulier, on retrouve le Théorème \ref{theointroxsig}  et le Corollaire \ref{corointrofond}  d\'ej\`a pr\'esents dans \cite{pan} quand $d=1$, et $K=\Q_p$.

Rappelons quelques notations avant d'énoncer le résultat. Le modèle entier $\H_{\OC_K}^d$ de l'espace symétrique admet un recouvrement par de affines  formels dont les ouverts $(\H_{\OC_K,  \sigma}^d)_{\sigma}=(\spf(\hat{A}_{\sigma}))_{\sigma}$ sont indexés par les simplexes maximaux ${\sigma}$ de l'immeuble de Bruhat-Tits $\BC\TC$ associé à $\pgln_{d+1}(K)$. Ces derniers sont permutés transitivement par l'action de $\pgln_{d+1}(K)$ et leurs sections $\hat{A}_{\sigma}$ est la complétion $p$-adique d'un anneau de la forme\footnote{cf \ref{sssectionhdkcalcexp} pour l'expression de la fonction $P_{\sigma}$.} $\OC_K[x_0, \dots, x_d, \frac{1}{P_{\sigma}}]/(x_0 \dots x_d- \varpi)$. Par équivariance du problème modulaire, il suffit d'étudier $\XG[\Pi_D]_{\sigma}$ est la restriction de $\XG[\Pi_D]$ à  $\H_{\OC_K,  \sigma}^d$. Le résultat est le suivant.

\begin{theointro}\label{theointroxsig} 
Soit $\sigma$ un simplexe maximal, on a\[\XG[\Pi_D]_{\sigma} \cong \spf(\hat{A}_{\sigma} [z_0, \dots, z_d]/ \langle z_i^q-u_ix_{i+1}z_{i+1} \rangle_{i \in \Z/(d+1)\Z} )\]
où 
\[u_i=\left( \frac{\prod_{(a_j) \in \F^d} 1+ \tilde{a}_1 x_{i-1}+ \dots + \tilde{a}_d x_{i-1} \dots x_{i+d-2}}{ \prod_{(a_j) \in \F^d} 1+ \tilde{a}_1 x_{i}+ \dots + \tilde{a}_d x_{i} \dots x_{i+d-1}} \right) \in \hat{A}_{\sigma}^*.\]

\end{theointro} 

  Nous terminons cette discussion en énonçant une conséquence remarquable au théorème précédent. Pour cela, on rappelle que l'idéal d'augmentation de $\XG[\Pi_D]$ est muni d'une action de $(\OC_D/\Pi_D)^*=\F_{q^{d+1}}^*$ dont les parties isotypiques sont des fibrés en droite sur $\H_{\OC_K}^d$. On appelle $\Lf_0, \dots, \Lf_d$ les parties isotypiques associées aux caractères $\F_{q^{d+1}}^*\to \bar{\F}^*_q$ obtenu en restreignant les $d+1$ plongements  $\F_q$-linéaires  de $\F_{q^{d+1}}$ dans $\bar{\F}_q$. Nous renvoyons à la classification de Raynaud \cite{Rayn} ou à \ref{deficar} pour voir le rôle-clé joué par les faisceaux $(\Lf_i)_i$ dans la géométrie de $\XG[\Pi_D]$ et de $\Sigma^1$. Le résultat remarquable est le suivant.

\begin{corointro}\label{corointrofond} 
Le faisceau $\Lf_0 \otimes \dots \otimes \Lf_d$ est trivial sur $\H_{\OC_K}^d$. 
\end{corointro}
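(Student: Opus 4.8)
The plan is to read off the line bundle $\Lf:=\Lf_0\otimes\cdots\otimes\Lf_d$ directly from the equations of Theorem \ref{theointroxsig} and to produce a global nowhere-vanishing section. Fix a maximal simplex $\sigma$. Since the augmentation ideal of $\XG[\Pi_D]_\sigma$ is generated over $\hat{A}_\sigma$ by the coordinates $z_0,\dots,z_d$ and its $\chi_i$-isotypic part is the free rank one module $\hat{A}_\sigma\,z_i$, each $\Lf_i$ is trivialized on the chart $\H^d_{\OC_K,\sigma}$ by $z_i$; hence $\Lf$ is trivialized there by $Z_\sigma:=z_0z_1\cdots z_d$.

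The key computation is to multiply the defining relations $z_i^q=u_i\,x_{i+1}\,z_{i+1}$ over $i\in\Z/(d+1)\Z$. Using the relation $x_0x_1\cdots x_d=\varpi$ of $\hat{A}_\sigma$ together with the cyclic reindexing $\prod_i z_{i+1}=\prod_i z_i$, one gets
\[ Z_\sigma^{\,q}\;=\;\Big(\prod_{i}u_i\Big)\,\varpi\,Z_\sigma .\]
Setting $N_j:=\prod_{(a_\ell)\in\F^d}\big(1+\tilde a_1 x_j+\cdots+\tilde a_d x_j x_{j+1}\cdots x_{j+d-1}\big)$, the formula of Theorem \ref{theointroxsig} reads $u_i=N_{i-1}/N_i$, so the cyclic product telescopes and $\prod_i u_i=1$, whence $Z_\sigma^{\,q-1}=\varpi$. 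Because $\nrm=\chi_0\cdots\chi_d$ has order $q-1$, the element $Z_\sigma^{\,q-1}$ sits in the trivial isotypic part, and this identity gives a morphism $\Lf^{\otimes(q-1)}\to\OC_{\H^d_{\OC_K}}$ carrying the local generator $Z_\sigma^{\otimes(q-1)}$ to $\varpi$ on every chart. As $\varpi$ is a global nonzerodivisor independent of $\sigma$, this morphism is injective with image $\varpi\,\OC$, so $\Lf^{\otimes(q-1)}\cong\OC$ is trivial.

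It then remains to descend from $\Lf^{\otimes(q-1)}$ to $\Lf$. Writing $Z_\sigma=\gamma^{\sigma\sigma'}Z_{\sigma'}$ on overlaps, the displayed identity forces $(\gamma^{\sigma\sigma'})^{q-1}=1$ after cancelling $\varpi$; since the overlaps are connected and normal and $\mu_{q-1}\subset\OC_K^{*}$, each $\gamma^{\sigma\sigma'}$ is a constant $(q-1)$-th root of unity. The transition data of $\Lf$ thus defines a class in $\hcech{1}(\mathfrak{U},\mu_{q-1})$ for the cover $\mathfrak{U}=(\H^d_{\OC_K,\sigma})_\sigma$, whose nerve is the Bruhat–Tits building $\BC\TC$. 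As $\BC\TC$ is contractible the class vanishes: one finds constants $\lambda_\sigma\in\mu_{q-1}$ with $\lambda_\sigma Z_\sigma=\lambda_{\sigma'}Z_{\sigma'}$ on overlaps, and the resulting global section trivializes $\Lf$.

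The main obstacle is precisely this last descent. Everything upstream — the two displayed identities and the telescoping $\prod_i u_i=1$ — is a direct manipulation of the equations of Theorem \ref{theointroxsig}. The delicate points are to verify that the transition functions of $\Lf$ are genuinely \emph{constant} roots of unity (connectedness and normality of the intersections $\hat{A}_{\sigma\sigma'}$) and that the combinatorics of the cover, with nerve the contractible building and connected pairwise intersections, really annihilates the $\mu_{q-1}$-valued Čech class. Equivalently, one must know that the subgroup of $\pic(\H^d_{\OC_K})$ of classes supported on the special fibre carries no $(q-1)$-torsion — a statement which again rests on the contractibility of $\BC\TC$, the generic-fibre input $\pic(\H^d_C)=0$ of \cite{J1} guaranteeing that $[\Lf]$ already lies in that subgroup.
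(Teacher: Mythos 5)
Votre démonstration est correcte dans ses grandes lignes, mais elle diverge de celle de l'article au moment crucial du recollement. Les deux preuves partagent le même point de départ : les générateurs locaux $Z_\sigma=z_0\cdots z_d$ de $\Lf_0\otimes\cdots\otimes\Lf_d$ fournis par le Théorème \ref{theointroxsig}, le télescopage $\prod_i u_i=1$ et l'équation fonctionnelle $Z_\sigma^q=\varpi Z_\sigma$, d'où l'on tire que les fonctions de transition $\gamma^{\sigma\sigma'}$ vérifient $\varpi(\gamma^q-\gamma)=0$, donc $\gamma^{q-1}=1$, et sont des constantes de $\mu_{q-1}$ par connexité et réduction des intersections. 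C'est ensuite que les chemins se séparent : l'article produit un générateur global explicite $t_0=Y_1^{\tilde N}/u$ à partir de la fibre générique, en combinant $\pic(\H^d_{\breve K})=0$ et l'équation globale $w=\varpi u^{q-1}$ du Théorème \ref{theoeqsigma1}, puis montre que les solutions de $Z^q=\varpi Z$ sur chaque simplexe sont exactement les multiples de $Z_\sigma$ par $\mu_{q-1}\cup\{0\}$, de sorte que $t_0$ engendre partout ; vous remplacez cette construction par un argument purement cohomologique, en tuant la classe de Čech à valeurs dans $\mu_{q-1}$ grâce à la contractibilité de l'immeuble. Votre variante est plus autonome au niveau entier (elle n'invoque ni l'annulation du Picard en fibre générique ni directement le Théorème \ref{theoeqsigma1}, même si ceux-ci restent cachés dans la preuve de la formule des $u_i$), tandis que la preuve de l'article a l'avantage de fournir un générateur canonique $t_0$ relié aux idempotents et aux $q-1$ composantes connexes de $\Sigma^1$. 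Deux imprécisions à corriger chez vous : l'égalité « $Z_\sigma^{q-1}=\varpi$ » n'a pas de sens littéral dans $\hat B_\sigma$ puisque $Z_\sigma$ y est diviseur de zéro (il faut passer par le morphisme $\Lf^{\otimes q}\to\Lf$ ou travailler directement sur les $\gamma^{\sigma\sigma'}$), et le nerf du recouvrement par les simplexes maximaux n'\emph{est} pas l'immeuble mais lui est homotopiquement équivalent par le lemme du nerf (les intersections non vides étant des tubes au-dessus de faces, donc connexes) ; le dernier paragraphe sur la torsion du groupe de Picard est superflu et n'intervient pas dans l'argument effectif.
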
  

Ce r\'esultat permet en particulier de construire un mod\`ele semi-stable en dimension $1$ (voir \cite[section 8]{pan}). 

Expliquons le lien entre les deux résultats précédents. La description explicite du Théorème \ref{theointroxsig} permet de construire sur chaque simplexe des générateurs locaux de $\Lf_0 \otimes \dots \otimes \Lf_d$. Il s'agit de voir que ces derniers se recollent pour définir un générateur global. Mais il vérifie tous la même équation fonctionnelle dont les solutions  sont reliées aux idempotents en fibre générique via les polynômes interpolateurs de Lagrange (voir la discussion précédent \ref{theointrosigst}) et cela garantit que le procédé se globalise. Grâce à ces méthodes, on peut construire une bijection entre une famille de générateurs $(r_i)_i$ de $((\Lf_0 \otimes \dots \otimes \Lf_d)^{\otimes i})_{1\le i \le q-1}$ et les idempotents associés aux composantes connexes géométriques de la fibre générique. Le résultat précédent peut alors être vu comme une manifestation au niveau entier du faite que $\Sigma^1$ a $q-1$ composantes connexes géométriques.

\subsection*{Conventions\label{paragraphconv}}
Dans tout l'article, on fixe $p$ un premier. Soit $K$ une extension finie de $\Q_p$ fixée, $\mathcal{O}_K$ son anneau des entiers, $\varpi$ une uniformisante et $\F=\F_q$ son corps r\'esiduel. On note $C=\hat{\bar{K}}$ une complétion d'une clôture algébrique de $K$ et $\breve{K}$ une complétion de l'extension maximale non ramifiée de $K$. Soit $L\subset C$ une extension complète de $K$ susceptible de varier, d'anneau des entiers $\mathcal{O}_L$, d'idéal maximal  $\mG_L$ et de corps r\'esiduel $\kappa$. $L$ pourra être par exemple spécialisé en $K$, $\breve{K}$ ou $C$.

Soit $D$ l'alg\`ebre centrale simple sur $K$ d'invariant $\frac{1}{d+1}$ et $\OC_D$ l'anneau des entiers de $D$. Si $K_{(d+1)}$ est une extension non ramifi\'ee de $K$ de degr\'e $(d+1)$ contenue dans $D$ et $\OC_{(d+1)}$ l'ordre maximal associ\'e, alors il existe $\Pi_D$ dans $\OC_D$ tel que $\OC_D= \OC_{(d+1)} \{ \Pi_D \}$ avec $\Pi_D^{d+1}= \varpi$ et pour tout $a$ dans $\OC_{(d+1)}$, $\Pi_D \cdot a = \sigma(a) \cdot \Pi_D$. Si $R$ est une $\OC_K$-alg\`ebre, on note $R[ \Pi ]$ l'alg\`ebre $R[X]/(X^{d+1}- \varpi)$.

Si $X$ est un $L$-espace analytique réduit, 
$\Of^+_X$ sera le sous-faisceau des sections à  puissances bornées, $\Of^{++}_X$ le sous-faisceau des sections topologiquement nilpotentes. On note $\Of^*_X$ ou $\G_{m,X}$ le faisceau des sections inversibles de $\Of_X$, $\Of^{**}_{X}$ le sous-faisceau $1+ \Of^{++}_X$.


\subsection*{Remerciements}

Le présent travail a été, avec \cite{J1,J2,J4}, en grande partie réalisé durant ma thèse à l'ENS de lyon,  et a pu bénéficier de nombreux conseils et de l'accompagnement constant de mes deux maîtres de thèse Vincent Pilloni et Gabriel Dospinescu. Je les en remercie très chaleureusement. Ce travail s'inspire aussi de discussions très fructueuses avec Arnaud Vanhaecke (et indirectement Alain Genestier) qui m'ont permises de déterminer complètement la fonction caractérisant le revêtement de type Kummer. Je lui suis aussi reconnaissant pour ses éclairages sur les schéma de Raynaud dont l'influence se ressent tout au long de la partie \ref{secmax}.

\section{Espace symétrique de Drinfeld\label{ssectionhdk}}

 Nous rappelons quelques résultats standards concernant la géométrie de l'espace symétrique de Drinfeld 
 en renvoyant à  (\cite[section 1]{boca}, \cite[sous-sections I.1. et II.6.]{ds}, \cite[sous-section 3.1.]{dat1}, \cite[sous-sections 2.1. et 2.2]{wa}) pour les détails.  Nous ne traitons que les aspects combinatoires ici. Pour l'approche modulaire et la construction des revêtements, nous renvoyons à la section \ref{ssecdel}.
On fixe une extension finie $K$ de $\Q_p$, une uniformisante $\varpi$ de $K$ et un entier $d\geq 1$. On note $\F=\F_q$ le corps résiduel de $K$ et $G={\rm GL}_{d+1}(K)$. 

\subsection{L'immeuble de Bruhat-Tits} \label{paragraphbtgeosimpstd}

\label{paragraphbtsimp}
Notons $\BC\TC$ l'immeuble de Bruhat-Tits associé au groupe $\pgln_{d+1}(K)$. Le $0$-squelette $\BC\TC_0$ de l'immeuble est l'ensemble des réseaux de $K^{d+1}$ à  homothétie près, i.e. $\BC\TC_0$ s'identifie à  $G/K^*\gln_{d+1}(\OC_K)$. Un $(k+1)$-uplet de sommets 
$\sigma=\{s_0,\cdots, s_k\}\subset \BC\TC_{0} $ est un $k$-simplexe de $\BC\TC_k$ si et seulement si, quitte à  permuter les sommets $s_i$, on peut trouver pour tout $i$ des réseaux $M_i$   avec $s_i=[M_i]$ tels que \[\varpi M_k\subsetneq M_0\subsetneq M_1\subsetneq\cdots\subsetneq M_k.\] 


La réalisation topologique de l'immeuble sera notée $|\BC\TC|$. On confondra les simplexes avec leur réalisation topologique de telle manière que $|\BC\TC|=\uni{\sigma}{\sigma\in \BC\TC}{}$. Les différents $k$-simplexes, vus comme des compacts de la réalisation topologique, seront appelés faces. L'intérieur d'une face $\sigma$ sera noté $\mathring{\sigma}=\sigma \backslash \uni{\sigma'}{\sigma'\subsetneq\sigma}{}$ et sera appelé cellule.


 Fixons un simplexe pointé $\sigma\in\widehat{\BC\TC}_{k}$ et considérons une présentation : \[M_{-1}=\varpi M_k\subsetneq M_0\subsetneq M_1\subsetneq\cdots\subsetneq M_k.\] En posant 
 $$\bar{M}_i=M_i/ M_{-1},$$
 on obtient un drapeau $0\subsetneq\bar{M}_0\subsetneq \bar{M}_1\subsetneq\cdots\subsetneq \bar{M}_k\cong \F_q^{d+1}$.  On note 
 $d_i={\rm dim}_{\F_q} (\bar{M}_i)-1$ et $e_i=d_{i+1}-d_{i}.$
  Nous dirons que le simplexe $\sigma$ est de type $(e_0, e_1,\cdots, e_k)$.
  
  Considérons une base $(\bar{f}_0,\cdots, \bar{f}_d)$ adaptée  au drapeau i.e. telle que $\bar{M}_i=\left\langle \bar{f}_0,\cdots , \bar{f}_{d_i}\right\rangle$ pour tout $i$. Pour tout choix de relevés $(f_0,\cdots ,f_d)$ de $(\bar{f}_0,\cdots ,\bar{f}_d)$ dans $M_0$, on a
  $$M_i=\left\langle f_0,\cdots, f_{d_i }, \varpi f_{d_i+1},\cdots, \varpi f_d \right\rangle=N_0\oplus \cdots \oplus N_i\oplus \varpi (N_{i+1} \oplus \cdots\oplus N_k),$$
  où 
  $$N_i=\left\langle f_{d_{i-1}+1},\cdots, f_{d_{i}} \right\rangle$$ avec $d_{-1}=-1$

   Si $(f_0,\cdots ,f_d)$ est la base canonique de $K^{d+1}$, nous dirons que $ \sigma $ est le simplexe standard de type $(e_0, e_1,\cdots, e_k)$. 






   Considérons la projection naturelle $M_i\setminus M_{i-1}\subset M_i\to (M_i/ \varpi M_i)/\F^*$. On choisit un sous-ensemble $R_i$ de $M_i\setminus M_{i-1}$ qui intersecte chaque fibre de la projection en un point. On fait de même avec $N_i\setminus\varpi N_i$ et la projection $N_i\to (N_i/ \varpi N_i)/\F^*$, obtenant ainsi un sous-ensemble $\tilde{R}_i$ de $N_i\setminus\varpi N_i$.

\subsection{L'espace des hyperplans $K$-rationnels} 

   On note $\HC$ l'ensemble des hyperplans $K$-rationnels dans $\P^d$. 
Si $a=(a_0,\dots, a_d)\in C^{d+1}\backslash \{0\}$, $l_a$ désignera l'application \[b=(b_0,\dots, b_d)\in C^{d+1} \mapsto \left\langle a,b\right\rangle := \som{a_i b_i}{0\leq i\leq d}{}.\] Ainsi $\HC$ s'identifie à  $\{\ker (l_a),\; a\in K^{d+1}\backslash \{0\} \}$ et à  $\P^d (K)$.
   
   Le vecteur $a=(a_i)_{i}\in C^{d+1}$ est dit unimodulaire si $|a|_{\infty}(:=\max (|a_i|))=1$. L'application 
   $a\mapsto H_a:=\ker (l_a)$ induit une bijection entre le quotient de l'ensemble des vecteurs unimodulaires 
   $a\in K^{d+1}$ par l'action évidente de $\OC_K^*$ et l'ensemble $\mathcal{H}$. 
   
   Pour $a\in K^{d+1}$ unimodulaire et $n\geq 1$, 
   on considère l'application $l_a^{(n)} $ \[b\in (\OC_{C}/\varpi^n)^{d+1}\mapsto \left\langle a,b\right\rangle\in \OC_{C}/\varpi^n\] et on note $$\HC_n =\{\ker (l_a^{(n)}),\; a\in K^{d+1}\backslash \{0\} \; {\rm unimodulaire} \}\simeq\P^d(\OC_K/\varpi^n).$$ Alors $\HC = \varprojlim_n \HC_n$ et chaque 
   $\HC_n$ est fini.


\subsection{Géométrie de l'espace symétrique}
\label{paragraphhdkrectein}

Nous allons maintenant décrire l'espace symétrique de Drinfeld $\H_K^d$. Il s'agit de l'espace  analytique sur $K$ dont les $C$-points sont \[\H_K^d (C)=\P^d (C)\backslash \uni{H}{H\in \HC}{}.\]

On a deux recouvrements admissibles croissants par des ouverts admissibles 
$\H^d_K =\uni{\mathring{U}_n}{n> 0}{}=\uni{\bar{U}_n}{n \ge 0}{}$
où 
$$\mathring{U}_n= \{z\in\P^d_{  K} : |\varpi|^{-n} > |\frac{l_a(z)}{l_b(z)}|>|\varpi|^n,\forall a\neq b \in \HC_{n}\}$$
et 
$$ \bar{U}_n= \{z\in\P^d_{  K} : |\varpi|^{-n} \ge |\frac{l_a(z)}{l_b(z)}|\ge |\varpi|^n,\forall a\neq b \in \HC_{n+1}\}.
$$

Le recouvrement par les affinoïdes $\bar{U}_n$ est Stein (cf ). Le deuxième est cofinal et chaque $ \mathring{U}_n$ est Stein.  
On note $\bar{U}_{n,L}=\bar{U}_{n}\hat{\otimes}_K L$, $\mathring{U}_{n,L}=\mathring{U}_{n} \hat{\otimes}_K L$ et $\H^d_L=\H^d_K\hat{\otimes}_K L$. 

   Il nous sera très utile pour la suite de considérer d'autres recouvrements de $\H^d_K$, reliés à l'application de réduction vers l'immeuble de Bruhat-Tits. 
  On a une application $G$-équivariante \[\tau : \H^d_K(C)\to \{\text{normes sur } K^{d+1}\}/\{\text{homothéties}\} \] donnée par $$\tau (z): v\mapsto |\som{z_i v_i}{i=0}{d}|$$ si $z=[z_0,\cdots, z_d] \in \H_K^d (C)$. L'image $\tau(z)$ ne dépend pas du représentant de $z$ car les normes sont vues à  homothétie près. Le fait de prendre le complémentaire des hyperplans $K$-rationnels assure que $\tau(z)$ est séparé et donc  une norme sur $K^{d+1}$.

D’après un résultat classique de Iwahori-Goldmann \cite{iwgo} l'espace des normes sur $K^{d+1}$ à  homothétie près s'identifie bijectivement (et de manière $G$-équivariante) à  l'espace topologique $|\BC\TC|$, ce qui permet de voir $\tau$ comme une application 
$$\tau: \H_K^d(C)\to |\BC\TC|.$$ Plus précisément, pour toute norme  $|\cdot|$ sur $K^{d+1}$, on peut trouver une base $(f_i)_{0\le i\le d}$ de $K^{d+1}$ telle que
$|\sum_i a_i f_i|=\sup_i |a_i||f_i|$ pour tous $a_i$, et 
$ |\varpi|\le |f_{i+1}|\le |f_i|\le 1$ pour $i<d$.
Les boules $B(0,r)$ centrées en $0$ et de rayon $r$ forment une suite croissante de réseaux et on a $B(0,|\varpi|r)=\varpi B(0,r)$. Se donner les rayons minimaux de ces réseaux entre $|\varpi|$ et $1$ revient à se donner une famille de nombres $(t_i)_{0\le i\le k}\in ]0,1[^{k+1}$ dont la somme vaut $1$. L'identification d'Iwahori-Goldmann revient à associer à toute norme $|\cdot|$ le point $x$ de la cellule $\mathring{\sigma}$ de poids  $(t_i)_i$ avec $\sigma=\{B(0,r)\}_r$.


\subsection{Les recouvrements par les tubes au-dessus des faces et des cellules\label{paragraphhdksimpfer}}

Soit $\sigma\in\BC\TC_{k}$ un simplexe de type $(e_0, e_1,\cdots, e_k)$ de présentation : \[\varpi M_k\subsetneq M_0\subsetneq M_1\subsetneq\cdots\subsetneq M_k.\] Nous chercherons à  décrire $$\H_{K,\sigma}^d:=\tau^{-1} (\sigma), \,\, \H_{K,\mathring{\sigma}}^d:=\tau^{-1} (\mathring{\sigma})$$ en suivant \cite[Section 7.]{ds}. 

Comme dans \ref{paragraphbtgeosimpstd}, donnons-nous une base adaptée  $(f_0,\cdots, f_d)$ et les objets $N_i$, $R_i$ et $\tilde{R}_i$ qui s'en déduisent. Tous les vecteurs de $K^{d+1}$ seront écrits dans cette base. Si $x=\sum_{i} x_i f_i$ et $y=\sum_{i} y_i f_i$ on note $\langle x,y\rangle=\sum_{i} x_i y_i$.
En particulier,  $z_j=\langle z, f_{j}\rangle$. L'espace recherché $\H_{K,\sigma}^d(C)$ est l'ensemble des points $z\in\P^d_{  K}(C)$ pour lesquels l'ensemble des boules fermées de $\tau(z)$ sont contenues dans $\sigma$. Justifions l'égalité 
$$\H_{K,\sigma}^d=\{z\in \P^d_{ K} | \, \forall a\in M_i\backslash M_{i-1}, 1=|\frac{\left\langle z,a \right\rangle}{z_{d_{i}}}|\,\text{et}\,\, |\varpi| |z_{d_{k}}|=|\varpi| |z_{d}|\le|z_{d_{i}}|\le|z_{d_{1}}|\leq \cdots\leq |z_{d}|\}.$$
Par construction, les éléments $f_{d_{i}}$ sont dans $M_i\backslash M_{i-1}$. Le premier jeu d'égalités garantit que tous les éléments de $M_i$  ne rencontrant pas $M_{i-1}$ ont même norme pour $\tau (z)$. La chaîne d'inégalités entraîne que les rayons des boules $M_i$ sont cohérents. On en déduit l'inclusion en sens direct. Réciproquement, prenons $z$ vérifiant les inégalités ci-dessus et intéressons-nous à une boule de rayon $r$ de $\tau(z)$ pour $|\varpi||z_{d}|\le r< |z_{d}|$. Il existe un entier $i<k$ pour lequel $|z_{d_{i}}|\le r<|z_{d_{i+1}}|$. Deux cas de figure se présentent alors :
\begin{itemize}
\item Si $a$ est dans $M_i$, on a l'existence  de $j$ tel que $M_j\subset M_i$ et $a\in M_j\backslash M_{j-1}$. Les inégalités considérées entraînent alors   $r\geq|z_{d_{i}}|\geq |z_{d_{j}}|=|\left\langle z,a \right\rangle|$.
\item  Sinon, on a l'existence  de $j$ tel que $M_{i+1}\subset M_j$ et $a\in M_j\backslash M_{j-1}$. On en déduit que  $|\left\langle z,a \right\rangle|\geq|z_{d_{j}}|\geq|z_{d_{i+1}}|>r$.
\end{itemize}
  Ainsi, $M_i=B(0,r)$ ce qui prouve l'inclusion en sens inverse.

 Par ultramétrie, il suffit de vérifier ces inégalités pour un système de représentants modulo $\varpi$ et $\OC_K^*$. Ainsi:
$$\H_{K,\sigma}^d=\{z\in \P^d_{ K} |\, \forall a\in R_i,\, 
1=|\frac{\left\langle z,a \right\rangle}{z_{d_{i}}}|\,\text{et}\,\, |\varpi| |z_{d}|\le|z_{d_{0}}|\le|z_{d_{1}}|\leq \cdots\leq |z_{d}|\}.$$
Par finitude de $R_i$, l'espace ci-dessus est bien un ouvert rationnel affinoïde. %

 Soit $0\leq j< d$ un entier et soit $0\le i_0\le k$ le plus petit  entier tel que $ j< d_{i_0}$. On pose     
 $$X_j=\frac{z_j}{z_{d_{i_0}}},\,\, X_d= \varpi\frac{z_{d_{k}}}{z_{d_{0}}}=\varpi\frac{z_{d}}{z_{d_{0}}}.$$ On a $| X_j | \le 1$ avec \'egalit\'e si $j \neq d_{i_0-1}$. On obtient un système de coordonnées $(X_0,\cdots,X_d)$ qui vérifie $\pro{X_{d_{i}}}{i=0}{k}=\varpi$. Pour tout $a$ dans $M_i\backslash M_{i-1}$, les quantités $\frac{\left\langle z,a \right\rangle}{z_{d_{i}}}$ s'expriment comme des polynômes $P_a (X_0,\cdots, X_d)$ à  coefficients dans $\OC_K$ que nous décrivons plus précisément dans la section suivante. En posant $P_\sigma=\pro{\pro{P_a}{a\in R_i}{}}{i=0}{k}$, on obtient la description suivante:

\begin{prop} On a 
$$\H_{K,\sigma}^d=\{(X_0,\cdots,X_d)\in \B^{d+1}_{K}|\,  \pro{X_{d_{i}}}{i=0}{k}=\varpi\,\, \text{et}\,\, 
|P_{\sigma}(X)|=1\}.$$
\end{prop}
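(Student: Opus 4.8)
Le plan est de reconnaître la dernière description affinoïde de $\H_{K,\sigma}^d$ (celle écrite ci-dessus en termes des systèmes de représentants $R_i$) comme un pur changement de coordonnées vers l'espace des $(X_0,\dots,X_d)$. Je regarde d'abord $X_0,\dots,X_d$ comme des éléments de $\Of(\H_{K,\sigma}^d)$ : ce sont des quotients des coordonnées homogènes, donc bien définis sur l'ouvert projectif épointé, et la discussion qui précède montre déjà que $|X_j|\le 1$ partout et que $\prod_{i=0}^k X_{d_i}=\varpi$ par télescopage. Ces fonctions définissent donc un morphisme $\Phi:\H_{K,\sigma}^d\to Y$, où $Y:=\{X\in\B_K^{d+1} : \prod_{i=0}^k X_{d_i}=\varpi,\ |P_\sigma(X)|=1\}$, pourvu que l'on vérifie $|P_\sigma(\Phi(z))|=1$. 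Or pour $a\in R_i$ on a par construction $P_a(X)=\langle z,a\rangle/z_{d_i}$, de module $1$ sur $\H_{K,\sigma}^d$, d'où $|P_\sigma|=\prod_i\prod_{a\in R_i}|P_a|=1$.

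Ensuite je construis le morphisme réciproque de manière purement algébrique. L'observation clé est que sur $Y$ chaque $X_{d_i}$ est inversible : de la relation $\prod_{l=0}^k X_{d_l}=\varpi$ on tire $X_{d_i}^{-1}=\varpi^{-1}\prod_{l\neq i}X_{d_l}\in\Of(Y)$. On peut alors poser, en coordonnées homogènes normalisées par $z_{d_0}=1$,
$$z_{d_i}=\prod_{l=0}^{i-1}X_{d_l}^{-1},\qquad z_j=X_j\prod_{l=0}^{i_0(j)-1}X_{d_l}^{-1},$$
où $i_0(j)$ est le plus petit indice tel que $j<d_{i_0(j)}$. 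Ceci fournit une section $\Psi:Y\to\P_K^d$, et un calcul direct montre que $\Phi$ et $\Psi$ sont inverses l'un de l'autre au niveau des anneaux de fonctions ; il ne reste donc qu'à vérifier que $\Psi$ atterrit effectivement dans $\H_{K,\sigma}^d$.

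Pour cela je réutilise la description affinoïde. D'une part, comme $|X_{d_l}|\le 1$, les modules $|z_{d_i}|=\prod_{l=0}^{i-1}|X_{d_l}|^{-1}$ forment une suite croissante valant $1$ en $i=0$, et l'inégalité $|\varpi|\,|z_d|\le|z_{d_0}|$ se lit simplement $|X_d|\le 1$ grâce à $\prod_l|X_{d_l}|=|\varpi|$ ; la chaîne d'inégalités sur les $|z_{d_i}|$ est donc satisfaite. D'autre part, pour $a\in R_i$ le polynôme $P_a$ est à coefficients dans $\OC_K$, si bien que $|P_a(X)|\le 1$ en tout point de $\B_K^{d+1}$ par ultramétrie ; la condition $|P_\sigma(X)|=\prod_i\prod_a|P_a(X)|=1$ force alors chaque $|P_a(X)|=1$, c'est-à-dire $|\langle z,a\rangle/z_{d_i}|=1$. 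Les deux jeux de conditions caractérisant $\H_{K,\sigma}^d$ sont ainsi établis, ce qui conclut.

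Le point principal à surveiller n'est pas tant conceptuel que comptable : il faut s'assurer que la normalisation $z_{d_0}=1$ est compatible avec toutes les formules (en particulier pour $j=d$, où $z_d=X_d/\varpi$ doit coïncider avec $\prod_{l=0}^{k-1}X_{d_l}^{-1}$ via $\prod_l X_{d_l}=\varpi$), et surtout que l'implication \og $|P_\sigma|=1\Rightarrow$ tous les $|P_a|=1$\fg{} soit licite. Cette dernière repose de façon essentielle sur l'intégralité des coefficients des $P_a$, qui sera explicitée dans la section suivante ; une fois ce fait admis, l'argument se réduit à la bijectivité du changement de coordonnées et à la traduction des inégalités, toutes deux élémentaires.
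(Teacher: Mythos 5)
Votre preuve est correcte et suit essentiellement la même démarche que le texte, qui laisse la vérification implicite dans la discussion précédant l'énoncé : le changement de coordonnées $z\mapsto X$, le télescopage donnant $\prod_i X_{d_i}=\varpi$, et l'argument «~$|P_a|\le 1$ car les coefficients sont dans $\OC_K$, donc $|P_\sigma|=1$ force chaque $|P_a|=1$~». Vous ne faites qu'expliciter l'inverse $\Psi$ et la traduction des inégalités, ce qui est bienvenu mais n'apporte pas d'idée nouvelle par rapport au paragraphe qui précède la proposition.
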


On en déduit que $\H_{K,\sigma}^d$ admet un modèle entier $\H_{\OC_K,\sigma}^d =\spf  (\hat{A}_\sigma)$ ou $\hat{A}_\sigma$ est le complété $p$-adique de 

\begin{equation}\label{eqhatasigma}
\OC_K [X_0,\cdots, X_d,\frac{1}{P_\sigma}]/(\pro{X_{d_{i}}}{i=0}{k}-\varpi).
\end{equation}


 On obtient alors $ \H_{K,\sigma}^d=\spg (A_\sigma)$ avec $A_\sigma=\hat{A}_\sigma[\frac{1}{\varpi}]$. De même, la fibre spéciale est donnée par $ \H_{\F,\sigma}^d=\spec (\bar{A}_\sigma)$ avec \[\bar{A}_\sigma=\hat{A}_\sigma/\varpi=\F [X_0,\cdots, X_d,\frac{1}{\bar{P}_\sigma}]/(\pro{X_{d_{i}}}{i=0}{k}). \] 
 

Des arguments identiques fournissent 

$$\H_{K,\mathring{\sigma}}^d=\{z\in \P^d_{ K} |\,  \forall a\in R_i, \ 1=|\frac{\left\langle z,a \right\rangle}{z_{d_{i}}}| \,\,\text{et}\, \, 
 |\varpi| |z_{d}|<|z_{d_{0}}|<|z_{d_{1}}|< \cdots< |z_{d}|\}$$
et l'on peut remplacer $R_i$ par $\tilde{R}_i$. 

 Considérons l'affinoïde

\[C_r= \{x=(x_1,\cdots, x_r)\in \B^r_K|\, \forall a\in \OC^{r+1}_K\backslash \varpi \OC^{r+1}_K, \  1=|\left\langle (1, x),a \right\rangle| \},\]
 la polycouronne 

\[A_k=\{y=(y_1,\cdots, y_k)\in \B^k_K|\,  1>|y_1|>\cdots>|y_k|>|\varpi|\}.\]
et les morphismes 
$$ \H_{K,\mathring{\sigma}}^d \rightarrow C_{e_i -1}, \,\, 
  [z_0,\cdots, z_d]  \mapsto  (\frac{ z_{d_{i}-e_i+1} }{ z_{d_{i}} }\cdots, \frac{ z_{d_{i}-2} }{ z_{d_{i}} }, \frac{ z_{d_{i}-1} }{ z_{d_{i}} })$$
et 
 $$\H_{K,\mathring{\sigma}}^d \rightarrow A_k,\,\, 
  [z_0,\cdots, z_d]  \mapsto  (\frac{ z_{d_{0}} }{ z_{d} },\frac{ z_{d_{1}} }{ z_{d} },\cdots, \frac{ z_{d_{k-1}} }{ z_{d} }).$$

\begin{prop}[\cite{ds} 6.4]\label{propdécompsimpouv} Les morphismes ci-dessus induisent un isomorphisme
\[ \H_{K,\mathring{\sigma}}^d \cong A_k \times \pro{C_{e_i-1}}{i=0}{k}:= A_k \times C_\sigma.\]
\end{prop}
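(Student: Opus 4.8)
The plan is to exhibit an explicit inverse to the morphism $\Phi:\H_{K,\mathring{\sigma}}^d\to A_k\times C_\sigma$ obtained as the product of the two maps described above, and then to check that $\Phi$ and this inverse are mutually inverse morphisms of rigid spaces. That $\Phi$ is a morphism at all is immediate from the description of $\H_{K,\mathring{\sigma}}^d$: each pivot coordinate $z_{d_i}$ is invertible on the tube (its norm is bounded below there), so every ratio $z_j/z_{d_i}$ appearing in the two maps is a regular function; moreover the target conditions — the strict chain $|\varpi|<|z_{d_0}/z_d|<\cdots<1$ cutting out $A_k$ and the $C_{e_i-1}$-conditions cutting out the factors $C_{e_i-1}$ — hold by the very inequalities defining $\mathring{\sigma}$. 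The entire content of the statement is therefore to invert $\Phi$.

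To build the inverse $\Psi$ I would normalize a point by $z_d=1$. The annulus coordinates then recover the pivot ratios $z_{d_i}$ for $0\le i<k$, and, inside the $i$-th block (the indices grouped between two consecutive pivots), the coordinates of the factor $C_{e_i-1}$ recover the within-block ratios $z_j/z_{d_i}$. Conversely, given $\big((y_i)_i,(x^{(i)})_i\big)\in A_k\times\pro{C_{e_i-1}}{i=0}{k}$, I set $z_d=1$, reconstruct the pivots $z_{d_i}$ from the $y_i$, and set $z_j=x^{(i)}_\bullet\, z_{d_i}$ for each $j$ in the $i$-th block; this produces a well-defined point $[z_0:\cdots:z_d]$. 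Since both assignments are written as the same monomials in the coordinates, the verifications $\Psi\circ\Phi=\id$ and $\Phi\circ\Psi=\id$ are then purely formal bookkeeping, once one knows that $\Psi$ really takes values in $\H_{K,\mathring{\sigma}}^d$.

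The crux — and the step I expect to be the main obstacle — is exactly this last point: that the reconstructed $z$ satisfies $|\langle z,a\rangle/z_{d_i}|=1$ for all $a\in R_i$ and all $i$. Here I would use the \emph{strict} inequalities of the open cell together with ultrametricity to show that $\langle z,a\rangle$ is dominated by its block-$i$ part: for $a\in M_i\setminus M_{i-1}$, the contributions from blocks $i'<i$ have norm at most $|z_{d_{i-1}}|<|z_{d_i}|$, while those from blocks $i'>i$ carry a factor $\varpi$ and so have norm at most $|\varpi||z_d|<|z_{d_0}|\le|z_{d_i}|$; both are \emph{strictly} smaller than $|z_{d_i}|$. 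Hence $|\langle z,a\rangle|$ equals the norm of its block-$i$ component, and the condition $|\langle z,a\rangle/z_{d_i}|=1$ involves only the block-$i$ coordinates $x^{(i)}$ — running $a\in R_i$ over representatives of $\OC^{e_i}_K\setminus\varpi\OC^{e_i}_K$ modulo $\OC_K^*$ turns it into precisely the defining condition of $C_{e_i-1}$. This dominant-term decoupling is exactly where strictness on $\mathring{\sigma}$ is indispensable: on the closed face $\sigma$ one may have $|z_{d_{i-1}}|=|z_{d_i}|$, the adjacent blocks no longer separate, and the product description fails. Granting the decoupling, $\Phi$ and $\Psi$ are mutually inverse, and replacing $R_i$ by $\tilde R_i$ as noted before the statement changes nothing, giving the desired isomorphism $\H_{K,\mathring{\sigma}}^d\iso A_k\times C_\sigma$.
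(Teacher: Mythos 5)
Votre argument est correct, et il identifie bien le point crucial : le découplage ultramétrique des blocs, qui repose sur la stricte croissance $|\varpi||z_d|<|z_{d_0}|<\cdots<|z_d|$ propre à la cellule ouverte (le terme du bloc $i$ domine strictement les contributions des blocs $i'<i$ et, grâce au facteur $\varpi$, celles des blocs $i'>i$, ce qui ramène la condition $|\langle z,a\rangle/z_{d_i}|=1$ à la condition définissant $C_{e_i-1}$). Notez toutefois que l'article ne démontre pas cet énoncé : il le cite tel quel de \cite[6.4]{ds}, et la phrase « l'on peut remplacer $R_i$ par $\tilde R_i$ » qui précède est précisément le découplage que vous établissez ; votre vérification directe (morphisme explicite, inverse explicite après normalisation $z_d=1$, puis contrôle que l'inverse atterrit bien dans le tube) est donc un complément légitime et c'est l'argument standard attendu.
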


\subsection{Calculs explicites\label{sssectionhdkcalcexp}} 


Avant de passer à la tour de revêtement, nous donnons des expressions explicites aux variables $(X_i)_i$ et aux polynômes $P_a$ qui caractérisent $\H^d_{\OC_K, \sigma}$ pour des simplexes $\sigma$ particuliers. Nous appellerons $z=[z_0,\cdots,z_d]$ la variable de l'espace projectif dans lequel sera plongé tous ces espaces.

\begin{itemize}
\item[•]D\'ecrivons le tube au-dessus d'un sommet $s$. L'action de $\gln_{d+1}(K)$ est transitive sur $\BC\TC_0$ et le sommet $s$ correspond \`a un module $M_0$ que l'on identifie \`a $\OC_K^{d+1}$. Les variables $(X_i)_i$ sont définies par les relations
\[\forall j <d, X_j=\frac{z_j}{z_d} \et X_d=\varpi.\]
On a $N_0=M_0$ et on peut prendre $R_0= \tilde{R}_0= \{ (\tilde{a}_0, \dots, \tilde{a}_d) =(\tilde{a}_{0},\dots , \tilde{a}_{j-1},1,0,\dots , 0 )\}$. Si $a=(\tilde{a}_0, \dots, \tilde{a}_d) \in R_0$, alors \[ P_{a}= \frac{\left\langle z,\tilde{a} \right\rangle}{z_d}=\tilde{a}_0X_0+ \tilde{a}_1X_1+ \dots + \tilde{a}_d \]
et les sections globales de $\H^d_{\OC_K, s}$ sont donn\'ees par la compl\'etion $\varpi$-adique de l'alg\`ebre \[ \OC_K [ X_0, \dots, X_d, \frac{1}{\prod_{\tilde{a} \in R_0} P_{a}}]/(X_d- \varpi)=\OC_K [ X_0, \dots, X_{d-1}, \frac{1}{\prod_{a \in R_0} P_a}]. \]

\item[•] Prenons maintenant $\sigma$ un simplexe maximal, ils sont tous conjugu\'es sous l'action de $\gln_{d+1}(K)$ et correspondent \`a une cha\^ine de modules \[ \varpi M_d\subsetneq M_0\subsetneq M_1\subsetneq\cdots\subsetneq M_d. \]
 Les variables $(X_i)_i$ sont définies par les relations
\[\forall j< d, X_j=\frac{z_j}{z_{j+1}} \et X_0=\varpi\frac{z_d}{z_0}.\]
On a $N_i = \langle f_i \rangle$, $\tilde{R}_i= \{ f_i \}$ et $R_i= \{ (  \tilde{a}_0, \dots,  \tilde{a}_{i-1}, 1,\varpi\tilde{a}_{i+1}, \dots, \varpi\tilde{a}_d)\}$. Si $a\in R_i$,
\[ P_a= 1 + \tilde{a}_{i-1} X_{i-1} +\tilde{a}_{i-2}X_{i-1}X_{i-2} + \dots + \tilde{a}_{i-j} \prod_{k=1}^{j} X_{i-k}+ \dots + \tilde{a}_{i+d} \prod_{k=1}^{d} X_{i-k}. \]
Les sections globales de $\H^d_{\OC_K, \sigma}$ sont donn\'ees par la compl\'etion $\varpi$-adique de l'alg\`ebre \[ \OC_K [ X_0, \dots, X_d, \frac{1}{\prod_i \prod_{\tilde{a} \in R_i} P_{\tilde{a}}}]/(X_0\dots X_d- \varpi). \]

\item[•] Intéressons-nous au simplexe standard $\sigma$ de dimension $d-1$ de type $(1,d)$. Ils  correspondent \`a une cha\^ine de modules \[ \varpi M_1\subsetneq M_0\subsetneq M_1\] avec $\dim_{\F}(M_0/\varpi M_1)=1$. Les variables $(X_i)_i$  vérifient
\[ \forall  j< d, X_j=\frac{z_j}{z_{d}} \et  X_d=\varpi\frac{z_d}{z_0}.\] Nous ne chercherons pas à exprimer les polynômes $P_a$ dont la description n'éclairera pas l'exposition. Notons toutefois que ces derniers contiennent les monômes $X_j$ pour $0<j<d$ qui définissent alors des fonctions inversibles sur $\H^d_{\OC_K, \sigma}$. On a aussi la relation $X_0 X_d=\varpi$.

\item[•] En particulier, en dimension $1$, si $s$ est un sommet et $a$ un arête, alors $\hat{A}_s$ est la compl\'etion $p$-adique de \[ \OC_K[X_0, X_1, \frac{1}{X_0-X_0^q}]/(X_1- \varpi) = \OC_K[X, \frac{1}{X-X^q}]\]
et $\hat{A}_{ a}$ est la compl\'etion $p$-adique de  \[ \OC_K[X_0, X_1, \frac{1}{1-X_0^{q-1}}, \frac{1}{1-X_1^{q-1}}]/(X_0X_1- \varpi). \]

\end{itemize}

\section{La tour de revêtement}

\subsection{Modèle de Deligne\label{ssecdel}}

Donnons une interprétation modulaire aux calculs réalisés dans \ref{paragraphhdkrectein}. On appelle $\sigma$ le Frobenius sur $\bar{\F}_q$ i.e. $\sigma(x)=x^q$ et on notera encore $\sigma$ un relev\'e sur $\OC_{\breve{K}}$. Si $X$ est un sch\'ema alg\'ebrique sur $\bar{\F}_q$ ou un sch\'ema formel sur $\OC_{\breve{K}}$, on appellera ${\rm Fr} : \sigma^{-1}_* X \to X$ le morphisme de Frobenius. De m\^eme, si $R$ est une $\OC_{\breve{K}}$-alg\`ebre, on notera $R^{(\sigma)}$ l'alg\`ebre sur $\OC_{\breve{K}}$ tordue par $\sigma$.  

Soit $D$ l'alg\`ebre centrale simple sur $K$ d'invariant $\frac{1}{d+1}$ et $\OC_D$ l'anneau des entiers de $D$. Si $K_{(d+1)}$ est une extension non ramifi\'ee de $K$ de degr\'e $(d+1)$ contenue dans $D$ et $\OC_{(d+1)}$ l'ordre maximal associ\'e, alors il existe $\Pi_D$ dans $\OC_D$ tel que $\OC_D= \OC_{(d+1)} \{ \Pi_D \}$ avec $\Pi_D^{d+1}= \varpi$ et pour tout $a$ dans $\OC_{(d+1)}$, $\Pi_D \cdot a = \sigma(a) \cdot \Pi_D$. Si $R$ est une $\OC_K$-alg\`ebre, on note $R[ \Pi ]$ l'alg\`ebre $R[X]/(X^{d+1}- \varpi)$.

Pour tout simplexe $\sigma=\{s_0, \dots, s_k\}$ dans $\BC \TC_k$, on d\'efinit un foncteur 
\[ \FC_{\sigma} : \mathrm{Nilp} \to \mathrm{Ens} \]
o\`u $\mathrm{Nilp} $ est la cat\'egorie des $\OC_K$-alg\`ebres $R$ telles que $\varpi$ soit nilpotent dans $R$. Si $R$ est une telle alg\`ebre, $\FC_{\sigma}(R)$ est l'ensemble des classes d'isomorphisme des diagrammes : 
\begin{equation}
\label{diagrammeFsigma}
\xymatrix{
\varpi M_k \ar@{^{(}->}[r] \ar[d]^{\frac{\alpha_k}{\varpi}} & M_{0}  \ar@{^{(}->}[r] \ar[d]^{\alpha_0} & \dots \ar@{^{(}->}[r] & M_k \ar[d]^{\alpha_k} \\ 
L_k \ar[r]^-{\Pi} \ar@/_1cm/[rrr]^{\cdot \varpi} & L_0 \ar[r]^-{\Pi} & \dots \ar[r]^-{\Pi} & L_k \\
}
\end{equation} 
où les $L_i$ sont des faisceaux localement libres de rang $1$ sur $S:= \mathrm{Spec}(R)$, les applications $\Pi$ des morphismes de $\Of_S$-modules et les $\alpha_i$ des morphismes de $\OC_K$-modules qui v\'erifient la condition : 
\begin{equation}
\label{CNalpha}
\ker( \alpha_i(x) : M_i / \varpi M_i \to L_i \otimes_R k(x)) \subseteq M_{i-1}/ \varpi M_i 
\end{equation}  
pour tout $x\in |S|$ ($k(x)$ \'etant le corps r\'esiduel de $x$).

Soit $\sigma$ un simplexe, décrivons plus précisément le foncteur $\FC_\sigma$. Soit $R$ un objet de $ {\rm Nilp}$ et $((L_i)_i, (\alpha_i)_i, (\Pi))$ un point de $\FC_\sigma (R)$, donnons-nous une base $(f_0,\cdots ,f_d)$ adaptée au simplexe $\sigma$. Pour $0\le j\le d$ et $i$ l'entier tel que $f_j\in M_i\backslash M_{i-1}$, la condition \eqref{CNalpha} impose que $\alpha_i (f_j)$ engendre $L_i$. Quand $j\neq d_i$, on trouve un unique élément $x_j\in R$ tel que $\alpha_i (f_{d_{i}})=x_j \alpha_i (f_j)$. On note $x_{d_{i}}\in R$ l'élément vérifiant $\alpha_{i+1}(f_{d_{i}})=x_{d_i}\alpha_{i+1} (f_{d_{i+1}})$ pour $i\neq k$ et $x_{d_{i}}=x_d$, celui vérifiant $\alpha_{0}(\varpi f_{d})=x_{d}\alpha_{0} (f_{d_{0}})$ L'égalité $\Pi\circ\cdots\circ\Pi=\varpi$ entraîne  $\prod_{0 \le i \le k} x_{d_i}= \varpi$. La condition \eqref{CNalpha} implique l'inversibilité des éléments $P_a(x_0,\cdots,x_d)$ où $P_a$ sont les polynômes introduits dans \ref{paragraphhdkrectein}. Trouver un $(d+1)$-uplet $(x_0,\cdots,x_d)$ vérifiant ces deux conditions détermine les morphismes $(\alpha_i)_i$ et $(\Pi)$ et nous avons en fait démontré

\begin{prop}
$\FC_{\sigma}$ est repr\'esentable par le sch\'ema formel $\H_{\OC_K, \sigma}^d$ d\'efini pr\'ec\'edemment.
\end{prop}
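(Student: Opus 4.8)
The plan is to prove representability by identifying the two functors on $\mathrm{Nilp}$ through their points, the essential computation being already contained in the discussion that precedes the statement. The first observation is that the formal‑scheme side is transparent: since $\varpi$ is nilpotent in any $R \in \mathrm{Nilp}$, every $\OC_K$‑algebra map out of the $\varpi$‑adically complete $\hat{A}_\sigma$ of \eqref{eqhatasigma} factors through a quotient $\hat{A}_\sigma/\varpi^n$, so that $\H^d_{\OC_K,\sigma}(R) = \homm_{\OC_K}(\hat{A}_\sigma, R)$ is \emph{naturally} the set of tuples $(x_0,\dots,x_d) \in R^{d+1}$ with $\prod_{i} x_{d_i} = \varpi$ and $P_\sigma(x) \in R^*$. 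It therefore suffices to exhibit a bijection, natural in $R$, between $\FC_\sigma(R)$ and this set.

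First I would make precise the forward map already sketched above. Having fixed once and for all the adapted basis $(f_0,\dots,f_d)$ that serves to define the coordinates $X_i$, I attach to a point $\big((L_i)_i,(\alpha_i)_i,\Pi\big) \in \FC_\sigma(R)$ the tuple $(x_i)$ read off exactly as before the statement: condition \eqref{CNalpha} forces, for each $f_j \in M_i \setminus M_{i-1}$, the section $\alpha_i(f_j)$ to be nowhere vanishing, hence to trivialise $L_i$; the $x_j$ with $j \neq d_i$ are the resulting ratios inside $L_i$, while the $x_{d_i}$ encode the maps $\Pi$ through $\alpha_{i+1}(f_{d_i}) = x_{d_i}\,\alpha_{i+1}(f_{d_{i+1}})$ (and $\alpha_0(\varpi f_d) = x_d\,\alpha_0(f_{d_0})$ for the wrap‑around). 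I would check that this tuple is insensitive to the isomorphism class: an isomorphism of diagrams is a family of invertible maps $L_i \xrightarrow{\sim} L_i'$ compatible with the $\alpha_i$ and with $\Pi$, and the $x_i$, being ratios of images of the $\alpha_i$, are visibly invariant under it; naturality in $R$ is clear since the construction commutes with base change $R \to R'$. The two target relations then hold for structural reasons: $\prod_i x_{d_i} = \varpi$ translates $\Pi \circ \cdots \circ \Pi = {\cdot\,}\varpi$, and the invertibility of $P_\sigma(x) = \prod_i \prod_{a \in R_i} P_a(x)$ is the translation of \eqref{CNalpha}.

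For the inverse I would start from a tuple $(x_i)$ satisfying the two relations, set every $L_i := \Of_S$, and reconstruct $(\alpha_i)_i$ by declaring $\alpha_i(f_{d_i})$ to be the canonical generator and imposing the ratios $x_j$ (resp. the cross‑block comparisons $x_{d_i}$) prescribed by the tuple, together with $\Pi : L_i \to L_{i+1}$ equal to multiplication by $x_{d_i}$; the relation $\prod_i x_{d_i} = \varpi$ then yields $\Pi \circ \cdots \circ \Pi = {\cdot\,}\varpi$. That the two assignments are mutually inverse is a matter of unwinding the definitions. The one step requiring genuine care — and what I expect to be the main obstacle — is the fibrewise equivalence between \eqref{CNalpha} and the invertibility of the $P_a(x_0,\dots,x_d)$: one must verify that, over every $x \in |S|$, the inclusion $\ker(\alpha_i(x)) \subseteq M_{i-1}/\varpi M_i$ holds exactly when each $P_a(x)$ is a unit (and not merely a generator of the relevant line), which is where the explicit identification of the $P_a$ with the quantities $\langle z,a\rangle / z_{d_i}$ recorded in \ref{paragraphhdkrectein} does the real work, now transported from the analytic setting to an arbitrary object of $\mathrm{Nilp}$.
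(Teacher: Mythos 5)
Your proposal follows exactly the paper's own argument: the proposition is stated there as the summary of the paragraph that precedes it (``nous avons en fait démontré''), which extracts the tuple $(x_0,\dots,x_d)$ from a point of $\FC_\sigma(R)$ via the generating sections $\alpha_i(f_j)$ and observes that the two relations $\prod_i x_{d_i}=\varpi$ and $P_\sigma(x)\in R^*$ characterise the image, the converse being that such a tuple determines $(\alpha_i)_i$ and $\Pi$. The additional verifications you spell out (the identification of the points of $\spf(\hat{A}_\sigma)$ on $\mathrm{Nilp}$ using that $\varpi$ is nilpotent, invariance under isomorphism of diagrams, naturality, and the explicit inverse with $L_i=\Of_S$) are precisely the routine details the paper leaves implicit, so the two arguments coincide.
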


Décrivons maintenant l'élément universel  $((L_i)_i, (\alpha_i)_i, (\Pi))$ sur $\Of (\H_{\OC_K, \sigma}^d)$. Introduisons $\alpha_k :M_k\to \Of (\H_{\OC_K, \sigma}^d)$ caractérisé par \[\forall 0\le  j\le d, \alpha_{k}(f_{j})=\begin{cases} 1 &\si j=d \\ X_{d_i}\cdots X_{d_{k-1}}X_j &\si j\neq d \et d_{i-1}\le j<d_i \end{cases}.\] Le diagramme suivant correspond à l'élément universel  $\Of (\H_{\OC_K, \sigma}^d)$ :

\begin{equation}
\label{diagrammeuniv}
\xymatrix{
\varpi M_k \ar@{^{(}->}[r] \ar[d]^{\frac{\alpha_k}{\varpi}} & M_{0}  \ar@{^{(}->}[r] \ar[d]^{\alpha_0} & \dots \ar@{^{(}->}[r] & M_k \ar[d]^{\alpha_k} \\ 
\Of (\H_{\OC_K, \sigma}^d) \ar[r]^-{\times X_{d_k}=\times X_{d}} \ar@/_1cm/[rrr]^{\cdot \varpi} & \Of (\H_{\OC_K, \sigma}^d) \ar[r]^-{\times X_{d_0}} & \dots \ar[r]^-{\times X_{d_{k-1}}} & \Of (\H_{\OC_K, \sigma}^d) \\
}
\end{equation} 
où $\times f$ désigne la multiplication par $f$ dans $\Of (\H_{\OC_K, \sigma}^d)$. Notons que les flèches horizontales du diagramme précédent sont injectives et cela entraîne que les flèches $\alpha_i$ sont uniquement déterminées par $\alpha_k$.

Si on consid\`ere $\sigma'= \sigma \backslash \{ s_{j} \}$ pour $j$ dans $\left\llbracket 1,k\right\rrbracket$ alors $\FC_{\sigma'}(R)$ correspond au sous-ensemble de $\FC_{\sigma}(R)$ constitu\'e des diagrammes \eqref{diagrammeFsigma} qui v\'erifient en plus la condition que $\Pi$ est un isomorphisme entre  $L_{j}$ et ${L_{j+1}}$. On obtient notamment une immersion ouverte $\H_{\OC_K, \sigma'}^d \hookrightarrow \H_{\OC_K, \sigma}^d$ et on d\'efinit $\FC$ comme la limite sur $\BC \TC$ des $\FC_{\sigma}$.    

Le foncteur $\FC$ est en fait isomorphe au foncteur, qui \`a une alg\`ebre $R$ de $\mathrm{Nilp} $ associe l'ensemble des classes d'isomorphisme des quintuplets $(\psi, \eta, T, u,r)$ avec 
\begin{itemize}[label=\textbullet]
\item $\eta$ un faisceau en $\OC_D$-modules plats, $\Z/(d+1)\Z$-gradu\'e et constructible sur $S_{\mathrm{Zar}}$ (o\`u $S:= \mathrm{Spec}(R)$),  
\item $T$ un faisceau de $\OC_S[\Pi]$-modules, $\Z/(d+1)\Z$-gradu\'e \footnote{Dans les deux points précédents, on voit $\OC_D$ (resp. $\OC_S[\Pi]$) muni de la $\Z/(d+1)\Z$-graduation telle que les éléments  $\OC_K$ (resp. $\OC_S$) sont de degré $0$ et $\Pi_D$ (resp. $\Pi$) est de degré $1$. Les actions de ces anneaux sur $\eta$ et $T$ respectent la graduation.} et tel que les composantes homog\`enes sont des faisceaux inversibles sur $S$,
\item $u : \eta \to T$ un morphisme $\OC_D$-lin\'eaire de degr\'e 0 tel que $u \otimes \Of_S : \eta \otimes_{\OC_K} \Of_S \to T$ est surjectif,  
\item $r$ un isomorphisme $K$-lin\'eaire du faisceau constant $K^{d+1}$ vers $\eta_0 \otimes_{\OC} K$,  
\end{itemize}
qui v\'erifient les conditions : 
\begin{enumerate}
\item la restriction de $\eta_i$ au lieu d'annulation $S_i$ de $\Pi : T_i \to T_{i+1}$ dans $S$ est un faisceau constant isomorphe au faisceau constant $\OC_K^{d+1}$,   
\item pour tout $x$ de $S$, $\eta_x / \Pi \eta_x \to (T_x/ \Pi T_x) \otimes k(x)$ est injective, 
\item pour tout $i$ dans $\left\llbracket 0,d\right\rrbracket$, $\bigwedge^{d+1} (\eta_i)|_{S_i} = \varpi^{-i} \bigwedge^{d+1} (\Pi^i r\OC_K^d)|_{S_i}$.  
\end{enumerate}

\begin{theo}
Le foncteur $\FC= \varinjlim \FC_{\sigma}$ est repr\'esentable par le sch\'ema formel $\H_{\OC_K}^d$.
\end{theo}


\subsection{Interprétation modulaire de l'espace de Drinfeld }

   Pour construire le premier revêtement de l'espace de Drinfeld, nous avons besoin d'une autre interprétation modulaire, ce qui demande quelques notions et notations. 

 Si $A$ est une $\OC_K$-algèbre, un \emph{$\OC_D$-module formel} sur ${\rm Spec}(A)$ (ou, plus simplement, sur $A$) est un groupe formel $X$ sur $A$ muni d'une action de $\OC_D$, notée $\iota : \OC_D \to \mathrm{End}(X)$, qui est compatible avec l'action naturelle de $\OC_K$ sur l'espace tangent $\mathrm{Lie}(F)$, \emph{i.e.} pour $a$ dans $\OC_K$, $d\iota (a)$ est la multiplication par $a$ dans $\mathrm{Lie}(X)$. Le 
  $\OC_D$-module formel $F$ est dit \emph{spécial} si $\mathrm{Lie}(X)$ est un 
  $\OC_{(d+1)}\otimes_{\OC_K} A$-module localement libre de rang $1$. On a le r\'esultat classique suivant:




\begin{prop}
  Sur un corps alg\'ebriquement clos de caract\'eristique $p$, il existe, à isogénie près, un unique 
   $\OC_D$-module formel sp\'ecial de dimension $d+1$ et de $(\OC_K$-)hauteur $(d+1)^2$. 
\end{prop}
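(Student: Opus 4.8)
La stratégie que j'adopterais est la traduction classique en théorie de Dieudonné (relative à $\OC_K$) suivie d'une réduction à la classification des isocristaux munis d'une action de $D$. Notons $k$ le corps algébriquement clos de caractéristique $p$ (qui contient $\F_q$), $W=W_{\OC_K}(k)$ l'anneau des vecteurs de Witt ramifiés relatif à $\OC_K$ et $\breve{K}=W[1/\varpi]$ son corps des fractions (pour $k=\bar{\F}_q$ on retrouve la notation du texte). Un $\OC_K$-module formel de dimension $d+1$ et de hauteur $(d+1)^2$ sur $k$ correspond alors à un $W$-module libre $M$ de rang $(d+1)^2$ muni d'un Frobenius $F$ ($\sigma$-linéaire) et d'un Verschiebung $V$ ($\sigma^{-1}$-linéaire) vérifiant $FV=VF=\varpi$, l'algèbre de Lie s'identifiant (convention covariante) à $M/VM$. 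L'action $\iota$ de $\OC_D$ fait de $M$ un module sur $W\otimes_{\OC_K}\OC_D$. Comme $K_{(d+1)}$ est non ramifiée et se plonge dans $\breve{K}$, on a $W\otimes_{\OC_K}\OC_{(d+1)}\cong\prod_{i\in\Z/(d+1)\Z}W$ via les $d+1$ plongements, d'où une graduation $M=\bigoplus_i M_i$. Puisque $\sigma$ permute cycliquement ces plongements et que $\Pi_D\,a=\sigma(a)\,\Pi_D$, les opérateurs $F$ et $\Pi:=\iota(\Pi_D)$ envoient $M_i$ dans $M_{i+1}$, tandis que $V$ envoie $M_{i+1}$ dans $M_i$.

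J'exploiterais ensuite les contraintes numériques. La condition d'être \emph{spécial} signifie que $M/VM$ est libre de rang $1$ sur $\OC_{(d+1)}\otimes_{\OC_K}k\cong\prod_i k$, c'est-à-dire $\dim_k(M_i/VM_{i+1})=1$ pour tout $i$; jointe à la hauteur $(d+1)^2$, elle force chaque $M_i$ à être libre de rang $d+1$ sur $W$. Le cœur de la preuve consiste alors à passer à l'isocristal $N=M\otimes_W\breve{K}$ et à montrer qu'il est isocline de pente $1/(d+1)$. Sur $N$ les opérateurs $F$, $V$ et $\Pi$ sont des bijections ($\varpi$ y est inversible et $\Pi^{d+1}=\varpi$ donne à $\Pi$ la pente $1/(d+1)$), et $\iota(D)$ commute à $F$. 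En analysant $G:=\Pi^{-1}F$ (qui est $\sigma$-linéaire, préserve la graduation et commute à $\Pi$), un calcul sur les réseaux utilisant la condition $\dim_k(M_i/VM_{i+1})=1$ montre que $G$ et $G^{-1}$ préservent le réseau $M$, donc que $G$ est de pente nulle; il en résulte que $F$ a la même pente que $\Pi$, à savoir $1/(d+1)$, d'où l'isoclinicité de $N$ (ce qui est bien compatible avec dimension $d+1$ et hauteur $(d+1)^2$).

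Une fois l'isoclinicité acquise, la classification de Dieudonné--Manin (dans sa version $\OC_K$-linéaire) assure que l'isocristal $N$, isocline de pente $1/(d+1)$ et de rang $(d+1)^2$, est unique à isomorphisme près; son algèbre d'endomorphismes contient $D$, et par Skolem--Noether deux structures de $D$-module sur $N$ sont conjuguées, de sorte que toute quasi-isogénie entre les isocristaux de deux modules spéciaux peut être rendue $\OC_D$-équivariante. Cela donne l'unicité à isogénie près. Pour l'existence, j'exhiberais explicitement un tel module, par exemple en partant de l'isocristal simple de pente $1/(d+1)$ dont l'algèbre d'endomorphismes réalise $D$, en vérifiant directement que la structure graduée obtenue satisfait la condition spéciale ainsi que les valeurs voulues de dimension et de hauteur. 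Le point délicat sera l'étape d'isoclinicité : il faut à la fois fixer soigneusement les conventions de semi-linéarité de la théorie de Dieudonné relative à $\OC_K$ et vérifier que la condition spéciale interdit l'apparition d'un sous-isocristal de pente distincte stable sous $\OC_D$ --- l'action seule de $D$ (qui rend $N$ libre de rang $1$ sur $D\otimes_K\breve{K}\cong\mathrm{M}_{d+1}(\breve{K})$) ne suffisant pas à forcer une unique pente, c'est bien la spécialité qui est essentielle.
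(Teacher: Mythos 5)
Le texte ne démontre pas cette proposition : elle y est présentée comme un \emph{résultat classique} (remontant à Drinfeld), sans preuve. Votre stratégie --- théorie de Dieudonné relative à $\OC_K$, graduation $M=\bigoplus_i M_i$ par les plongements de $\OC_{(d+1)}$, isoclinicité de l'isocristal, puis Dieudonné--Manin et Skolem--Noether --- est bien la voie standard, et les étapes de réduction (chaque $M_i$ de rang $d+1$, unicité de la structure de $D$-module à conjugaison près, existence par construction explicite) sont correctes.

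En revanche, l'étape que vous identifiez vous-même comme le point délicat contient une erreur réelle. Avec vos conventions ($\lie(X)=M/VM$, donc $\mathrm{length}_W(M_i/VM_{i+1})=1$, d'où $\mathrm{length}_W(M_{i+1}/FM_i)=d$), on a $v(\det F)=d(d+1)$ tandis que $v(\det\Pi)=\mathrm{length}_W(M/\Pi M)=d+1$ : c'est $V$, et non $F$, qui a la même pente déterminantale que $\Pi$. L'opérateur $G=\Pi^{-1}F$ vérifie $[M_i:GM_i]=d-\mathrm{length}_W(M_{i+1}/\Pi M_i)$, dont la somme sur $i\in\Z/(d+1)\Z$ vaut $(d-1)(d+1)$ ; pour $d\ge 2$ ce nombre est strictement positif, donc $G$ et $G^{-1}$ ne peuvent pas préserver simultanément le réseau $M$, et $G$ n'est pas de pente nulle --- le cas $d=1$, où $F$ et $V$ ont tous deux pente $1/2$, masque le problème. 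L'opérateur de pente nulle à considérer est $\Pi V^{-1}=\Pi^{-d}F$ ; et même pour lui, l'assertion qu'il préserve $M$ est fausse en général (elle entraînerait $\mathrm{length}_W(M_i/\Pi M_{i-1})=1$ pour tout $i$, ce qui n'est pas automatique). L'argument classique consiste plutôt à montrer, en utilisant précisément la condition spéciale, que le réseau $\sum_{n\ge 0}(\Pi V^{-1})^nM$ est de type fini, d'où un réseau stable et l'isoclinicité. Enfin, votre bilan des pentes mélange deux normalisations : le groupe $p$-divisible est isocline de pente $\dim/\mathrm{ht}=1/(d+1)$, mais les pentes de $F$ valent alors $d/(d+1)$ (leur somme devant être $v(\det F)=d(d+1)$), de sorte que l'énoncé \emph{$F$ a la même pente que $\Pi$} est incompatible avec votre propre mise en place dès que $d\ge 2$. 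Une fois l'isoclinicité correctement établie, la fin de votre argument (Dieudonné--Manin, Skolem--Noether, existence) est valable.
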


  On notera $\Phi_{\bar{\F}_q}$ l'unique (à isogénie près) 
$\OC_D$-module formel sp\'ecial $\Phi_{\bar{\F}_q}$ sur $\bar{\F}_q$ de dimension $d+1$ et hauteur $(d+1)^2$ (l'entier $d$ étant fixé par la suite, nous ne le faisons pas apparaître dans la notation $\Phi_{\bar{\F}_q}$).

Consid\'erons le foncteur $\GC^{Dr} : \mathrm{Nilp} \to \mathrm{Ens}$ envoyant $A\in \mathrm{Nilp}$ sur l'ensemble des classes d'isomorphisme de triplets $(\psi, X, \rho)$ avec : 
\begin{itemize}[label= \textbullet] 
\item $\psi : \bar{\F}_q \to A/ \varpi A$ est un $\F_q$-morphisme, 
\item $X$ est un $\OC_D$-module formel sp\'ecial de dimension $d+1$ et de hauteur $(d+1)^2$ sur $A$, 
\item $\rho : \Phi_{\bar{\F}_q} \otimes_{\bar{\F}_q, \psi} A/ \varpi A \to X_{A/ \varpi A}$ est une quasi-isog\'enie de hauteur z\'ero. 
\end{itemize}
   
 Le théorème fondamental suivant, à la base de toute la théorie, est dû à Drinfeld : 

\begin{theo}[\cite{dr2}]\label{Drrep}
Le foncteur $\GC^{Dr}$ est repr\'esentable par $\H_{\OC_{\breve{K}}}^d$.
\end{theo} 

Appelons $\XG$ le module universel sur $\H_{\OC_{\breve{K}}}^d$. 

\begin{rem}\label{remlierep}
L'isomorphisme pr\'ec\'edent identifie les parties isotypiques de $\mathrm{Lie}(\mathfrak{X})$ et les faisceaux $T_i$.
\end{rem}

\begin{rem}
On d\'efinit le foncteur $\tilde{\GC}^{Dr}$ de la m\^eme manière que $\GC^{Dr}$ mais en ne fixant plus la hauteur de la quasi-isog\'enie $\rho$. Alors $\tilde{\GC}^{Dr}$ est, lui aussi, repr\'esentable par un schéma formel $\widehat{\MC}^0_{Dr}$ sur $ \spf (\OC_K)$, qui se  d\'ecompose $$\tilde{\GC}^{Dr}= \coprod_{h \in \Z} \GC^{Dr,(h)},$$ o\`u $\GC^{Dr,(h)}$ est d\'efini comme pr\'ec\'edemment en imposant que la quasi-isog\'enie $\rho$ soit de hauteur $(d+1)h$. Chacun des $\GC^{Dr,(h)}$ est alors isomorphe (non canoniquement) au foncteur $\GC^{Dr}$, ce qui induit un isomorphisme non-canonique $$\widehat{\MC}^0_{Dr}\cong \H^d_{ \OC_{\breve{K}}}\times \Z.$$ 
     
\end{rem}

\subsection{La tour de Drinfeld}

 On a $\mathfrak{X}$  le $\OC_D$-module formel sp\'ecial universel sur $\H_{\OC_{\breve{K}}}^d$ (cf. th. \ref{Drrep}) et 
on note $\tilde{\XG}$ le module formel spécial universel déduit de la représentabilité de $\tilde{\GC}^{Dr}$. Pour tout entier $n\geq 1$, l'action de $\Pi_D^n$ induit une isogénie de 
 $\XG$ et de $\tilde{\XG}$. Le schéma en groupes 
$\mathfrak{X}[ \Pi_D^n] = \ker( \mathfrak{X} \xrightarrow{\Pi_D^n} \mathfrak{X})$ (resp. $\tilde{\XG}[\Pi_D^n]$)
est fini plat, de rang $q^{n(d+1)}$ sur $\H_{\OC_{\breve{K}}}^d$ (resp. $\widehat{\MC}^0_{Dr}$). 

  On note $\Sigma^0=\H^d_{\breve{K}}$ et $\MC^0_{Dr}=(\widehat{\MC}^0_{Dr})^{\rm rig}\cong\H^d_{\breve{K}}\times \Z$. Pour $n\geq 1$ on 
 d\'efinit 
\[ \Sigma^n := (\mathfrak{X}[\Pi_D^n] \backslash \mathfrak{X}[ \Pi_D^{n-1} ])^{\text{rig}},\  \MC^n_{Dr}:= (\tilde{\XG}[\Pi^n_D]\backslash\tilde{\XG}[\Pi_D^{n-1}])^{\rm rig}.\]
Les  morphismes d'oubli $\Sigma^n \to \Sigma^0$ et $\MC^n_{Dr}\to\MC^0_{Dr}$ d\'efinissent des rev\^etements finis \'etales de groupe de Galois\footnote{De même, les morphismes intermédiaires $\Sigma^n\to\Sigma^{n-1}$ et $\MC^n_{Dr}\to\MC^{n-1}_{Dr}$ sont des revêtements finis étales de groupes de Galois $(1+\Pi^{n-1}_D \OC_D)/(1+\Pi^n_D \OC_D)$. Les tours obtenues définissent aussi des revêtements pro-étales de groupe de Galois $\OC_D^*$.} $\OC_D^{*}/(1+ \Pi^n_D \OC_D)$. On a encore des isomorphismes non-canoniques $\MC^n_{Dr}\cong \Sigma^n\times \Z$ et les revêtements respectent ces décompositions.

  Le groupe $G=\gln_{d+1}(\OC_K)$ s'identifie au groupe des quasi-isogénies de $\XG$, il agit donc naturellement sur chaque niveau de la tour $(\MC^n_{Dr})_{n\geq 0}$. De même, le groupe $\OC_D^{*}$ permute les points de $\Pi^n_D $-torsion et $\OC_D^{*}$ agit sur $\MC^n_{Dr}$ à travers son quotient $\OC_D^{*}/(1+ \Pi^n_D \OC_D)\simeq {\rm Gal}(\MC^n_{Dr}/\MC^0_{Dr})$. Ces deux actions commutent entre elle et les revêtements $\MC^n_{Dr}\to\MC^0_{Dr}$ sont $G$-équivariants. En revanche, le revêtement $\Sigma^n\to\Sigma^{0}$ est seulement  $\gln_{d+1}(\OC_K)$-équivariant\footnote{En fait, il l'est même pour le groupe  $(v\circ \det)^{-1}((d+1)\Z)\subset G$.}.

\subsection{Le premier revêtement\label{paragraphrevsigman}} 

Nous nous intéressons désormais au cas $n=1$. L'espace $\Sigma^1$ est un rev\^etement  de $\Sigma^0$, de groupe de Galois  $\F_{q^{d+1}}^*$. Ce groupe est cyclique et son  cardinal,
 $$N=q^{d+1}-1,$$ est premier \`a $p$. C'est un revêtement modérément ramifié et ces deux propriétés joueront un rôle central dans l'étude que nous voulons mener. 
  Le schéma 
 $\mathfrak{X}[ \Pi_D]$ est, en particulier, un sch\'ema en $\F_p$-espaces vectoriels et la condition que $\mathfrak{X}$ soit sp\'ecial entraîne que $\mathfrak{X}[ \Pi_D]$ est un sch\'ema de Raynaud. En utilisant la classification de ces sch\'emas, il a \'et\'e montr\'e, par Teitelbaum \cite[ théorème 5]{teit2} pour $d=1$ et par Wang  \cite[ lemme 2.3.7.]{wa} pour $d$ quelconque: 

\begin{theo}  \label{theoniventiersigmasommet}
Soit $s$ le sommet standard de $\BC\TC$ et $$u_1=\prod_{H \in \HC_1 \backslash \{ H_0 \}} (\frac{l_H}{l_{H_0}})^{q-1},$$ avec $H_0\in\HC_1$ une direction \`a l'infini privil\'egi\'ee et $l_H=:l_{a_H}$ une forme linéaire pour $a_H\in K^{d+1}$ unimodulaire tel que $H=\ker(l_{a_H}^{(1)})$. On a
 \[ \XG[\Pi_D]  |_{\H_{\OC_{\breve{K}},s}^d}=\spf (\hat{A}_s[T]/(T^{q^{d+1}}-\varpi u_1 T))  \]
avec $\hat{A}_s=\Of(\H_{\OC_{\breve{K}},s}^d)$. \end{theo}

   Nous allons rappeler les arguments dans la section suivante pour le confort du lecteur. En effet, le théorème précédent décrit le « niveau entier » du premier revêtement alors que le résultat de Wang décrit la fibre générique. Toutefois, ce résultat plus fort est utilisé dans la preuve même s'il n'est pas énoncé explicitement. Nous le préférons car il détermine tous les objets qui nous intéressent (fibre générique et fibre spéciale).
 
 On peut encore définir une flèche de réduction de $\Sigma^1$ vers l'immeuble de Bruhat-Tits $\BC\TC$ s'inscrivant dans le diagramme :
\[
\xymatrix{ 
\Sigma^1  \ar[d]^{\pi} \ar[dr]^{\nu} & \\
 \H^d_{\breve{K}} \ar[r]^-{\tau} & \BC \TC }.\]
Pour tout sous-complexe simplicial $T \subseteq \BC \TC$, on note 
$$\Sigma^1_{T}= \nu^{-1}(T),\,\,\Sigma^1_{L,T}= \Sigma^1_{T}\otimes_{\breve{K}} L.$$ Le théorème \ref{theoniventiersigmasommet} établit l'isomorphisme $$\Sigma^1_s\cong \H^d_{\breve{K},s}((\varpi u_1)^{\frac{1}{N}}).$$ 

Nous allons aussi généraliser cette description au revêtement  $\Sigma^1/\H^d_{{\breve{K}}}$ tout entier.

\section{Description de $\XG[\Pi_D]  |_{\H_{\OC_{\breve{K}},s}^d}$}

\subsection{Schémas de Raynaud\label{ssectionrayn}}

Soit $S$ un sch\'ema formel sur $\spf (\OC_{\breve{K}})$,\footnote{Nous pouvons en fait raisonner sur un anneau des entiers beaucoup plus petit que $\OC_{\breve{K}}$ appelé $D$ dans \cite{Rayn} (il est en particulier fini sur $\Z$). Toutefois l'anneau $\OC_{\breve{K}}$ est le cadre naturel pour le problème modulaire étudié.} nous nous int\'eressons aux sch\'emas $X$ en groupes finis, plats et  de présentation  finie  sur $S$, muni d'une action de $\F_{q^{d+1}}$ avec $q=p^f$. En particulier, $X$ est affine si $S$ l'est.

  Si on note $\IC$ l'id\'eal d'augmentation de $X$, il admet une d\'ecomposition  en parties isotypiques pour l'action du groupe commutatif $\F_{q^{d+1}}^*$,
\[ \IC= \bigoplus_{\chi \in (\F_{q^{d+1}}^*)^{\vee}} \Lf_{\chi}. \]

\begin{defi}
On dit que $X/S$ est un sch\'ema en $\F_{q^{d+1}}$-espaces vectoriels de Raynaud si les parties isotypiques $\Lf_{\chi}$ sont localement libres de rang $1$ sur $\Of_S$. Si, de plus, chaque $\Lf_{\chi}$ est un libre, nous dirons que $X$ est un sch\'ema de Raynaud libre. 
\end{defi}

Notons l'existence de fl\`eches pour $\chi, \chi'$ des caract\`eres :
\[ d_{\chi, \chi'} : \Lf_{\chi} \otimes \Lf_{\chi'} \to \Lf_{\chi \chi'}, \]
\[ c_{\chi, \chi'} : \Lf_{\chi \chi'} \to \Lf_{\chi} \otimes \Lf_{\chi'}, \]
provenant de la multiplication et de la comultiplication dans $\Of_X$.

\begin{defi}
On dit que $\chi$ est fondamental si le prolongement de $\chi$ \`a $\F_{q^{d+1}}$ (envoyant $0$ sur $0$) est un morphisme de corps. 
\end{defi} 

Si $\chi_0$ est un caract\`ere fondamental, on observe que la famille $(\chi_i)_{i \in \Z/f(d+1)\Z}=(\chi_0^{p^i})_{i \in \Z/f(d+1)\Z}$ parcourt l'ensemble des caract\`eres fondamentaux. De plus, tout caract\`ere $\chi$ admet une unique 
écriture \[ \chi = \prod_{i \in \Z/f(d+1)\Z} \chi_i^{\alpha_i}\]
avec $\alpha_i=0, \dots , p-1$. On construit grâce aux morphismes $(c_{\chi, \chi'})_{\chi, \chi'}$, $(d_{\chi, \chi'})_{\chi, \chi'}$ et à l'associativité de la multiplication et de la comultiplication des applications :
\[ c_{\chi}: \Lf_{\chi} \to \Lf_{\chi_0}^{\alpha_0}\otimes\dots\otimes \Lf_{\chi_{f(d+1)-1}}^{\alpha_{f(d+1)-1}},\]     
\[ d_{\chi}: \Lf_{\chi_0}^{\alpha_0}\otimes\dots\otimes \Lf_{\chi_{f(d+1)-1}}^{\alpha_{f(d+1)-1}} \to \Lf_{\chi}.\] La compos\'ee $d_\chi\circ c_\chi$ est donc dans $\en(\Lf_{\chi_{i+1}})= \Of(S)$. 

 Les parties isotypiques $\Lf_{\chi_i}$ pour les caract\`eres fondamentaux déterminent les autres. En effet, on a le résultat suivant : 
\begin{lem}[\cite{Rayn} proposition 1.3.1]\label{lemisot}

La composition $d_{\chi}\circ c_{\chi}$ est dans $\OC_{\breve{K}}^*\subset \Of^*(S)$ et $d_{\chi}$, $c_{\chi}$ sont donc inversibles.

\end{lem}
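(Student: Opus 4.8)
The plan is to prove that the scalar $d_\chi\circ c_\chi\in\en(\Lf_\chi)=\Of(S)$ is the image of a constant of $\OC_{\breve{K}}$ whose reduction modulo the maximal ideal is nonzero, hence a unit. Recall that $c_\chi$ and $d_\chi$ are obtained by iterating, along the factorisation $\chi=\prod_i\chi_i^{\alpha_i}$ with $0\le\alpha_i\le p-1$, the elementary maps $c_{\chi',\chi''}$ and $d_{\chi',\chi''}$ coming from the comultiplication and the multiplication of $\Of_X$. As these maps are $\Of_S$-linear and their formation commutes with base change, I would first descend the computation of $d_\chi\circ c_\chi$ to a single model, and then compute its reduction in characteristic $p$.

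The descent is the heart of Raynaud's classification and the step I expect to cost the most: Zariski-locally on $S$ the invertible sheaves $\Lf_{\chi_i}$ are trivial, and after trivialising them $X$ becomes the pullback of a standard Raynaud scheme over $\OC_{\breve{K}}$; the maps $c_{\chi',\chi''}$ and $d_{\chi',\chi''}$, being induced by the Hopf structure of $\Of_X$, are pulled back as well. Consequently $d_\chi\circ c_\chi$ is the image of a constant $w\in\OC_{\breve{K}}$ that is the same for every Raynaud scheme, and to prove $w\in\OC_{\breve{K}}^*$ it suffices, $\OC_{\breve{K}}$ being local, to show that $\bar w\in\bar{\F}_q$ is nonzero.

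I would compute $\bar w$ on the additive model $X_0=\spec\big(\bar{\F}_q[T]/(T^{q^{d+1}})\big)$, with comultiplication $\Delta(T)=T\otimes1+1\otimes T$ and with $\F_{q^{d+1}}^*$ acting by scaling $T$. Here $\Lf_{\chi^{(m)}}=\bar{\F}_q\,T^m$ for $1\le m\le q^{d+1}-1$, and, crucially, in characteristic $p$ one has $\Delta(T^{p^i})=T^{p^i}\otimes1+1\otimes T^{p^i}$, so the generators $T^{p^i}$ of the fundamental pieces $\Lf_{\chi_i}$ are primitive. Writing $m=\sum_i\alpha_i p^i$, the multiplication map sends the generator $\bigotimes_i(T^{p^i})^{\otimes\alpha_i}$ to $T^{m}$; since $m\le q^{d+1}-1$ the relation $T^{q^{d+1}}=0$ is never reached, so $d_\chi$ is an isomorphism. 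Expanding the iterated comultiplication and splitting off the $\alpha_i$ copies of the primitive $T^{p^i}$ one by one yields the factor $\alpha_i!$ for each $i$, whence $c_\chi$ is multiplication by $\prod_i\alpha_i!$ and $\bar w=\prod_i\alpha_i!\bmod p$.

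Finally $\prod_i\alpha_i!$ is prime to $p$, each factor being a product of integers at most $p-1$; equivalently, since every digit $\alpha_i$ is $\le p-1$ the sum $m=\sum_i\alpha_i p^i$ is carry-free, so the wrap-around relation carrying the positive valuation (the analogue of the constant $\varpi u_1$ of Theorem~\ref{theoniventiersigmasommet}) never intervenes. Thus $\bar w\neq0$, so $w\in\OC_{\breve{K}}^*\subset\Of^*(S)$; being a unit endomorphism of the invertible sheaf $\Lf_\chi$, it forces both $c_\chi$ and $d_\chi$ to be isomorphisms.
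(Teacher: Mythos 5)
Le lemme n'est pas démontré dans l'article : il est cité tel quel de \cite{Rayn} (proposition 1.3.1), il n'y a donc pas de preuve interne à laquelle comparer la vôtre. Jugée en elle-même, votre vérification sur le modèle additif $\spec\bigl(\bar{\F}_q[T]/(T^{q^{d+1}})\bigr)$ est correcte : les parties isotypiques sont les droites $\bar{\F}_q\,T^m$, les fondamentales sont engendrées par les éléments primitifs $T^{p^i}$, et $d_\chi\circ c_\chi$ vaut le coefficient multinomial $\binom{m}{p^{i_1},\dots,p^{i_k}}$ (où $m=\sum_i\alpha_i p^i$ est partitionné suivant ses chiffres en base $p$), qui est $\equiv\prod_i\alpha_i!\not\equiv 0\pmod p$ par Lucas, précisément parce que l'écriture est sans retenue. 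C'est la bonne intuition pour expliquer pourquoi ces constantes sont inversibles alors que les constantes « avec retenue » $d_i\circ c_i$ de la proposition suivante sont de valuation strictement positive.

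La lacune est l'étape de descente, que vous signalez vous-même comme le point coûteux mais que vous expédiez par une affirmation fausse. Trivialiser les $\Lf_{\chi_i}$ Zariski-localement ne fait pas de $X$ l'image réciproque d'un schéma de Raynaud sur $\OC_{\breve{K}}$ : un schéma de Raynaud libre sur $S$ est donné par des constantes de structure $v_i\in\Of(S)$ qui ne proviennent pas de $\OC_{\breve{K}}$ en général (les schémas $\XG[\Pi_D]_\sigma$ de cet article même, avec $v_i=u_i x_{d-i}\in\hat{A}_\sigma$, en sont des exemples). Ce dont vous avez besoin est que les scalaires particuliers $d_\chi\circ c_\chi$ soient des constantes \emph{universelles}, indépendantes de $X$ et de $S$ ; c'est seulement alors qu'un calcul sur un modèle les détermine partout. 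Cette universalité est le contenu substantiel de la proposition 1.3.1 de Raynaud, qui demande un argument propre (elle ne peut pas se déduire du théorème de classification \ref{theoclassicraynaud}, dont la preuve repose sur le présent lemme : ce serait circulaire). En l'état, votre argument ne démontre le lemme que pour le seul schéma en groupes $\alpha_{q^{d+1}}$ sur $\bar{\F}_q$.
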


Il reste à comprendre les relations entre les caract\`eres fondamentaux. Pour cela, nous introduisons  
\[ c_{i}: \Lf_{\chi_{i+1}} \to \Lf_{\chi_i}^{\otimes p},\]     
\[ d_{i}: \Lf_{\chi_i}^{\otimes p} \to \Lf_{\chi_{i+1}}.\] 
Comme précédemment, la compos\'ee $d_i \circ c_i$ est  dans $\en(\Lf_{\chi_{i+1}})= \Of(S)$.

\begin{prop}[\cite{Rayn} proposition 1.3.1]
Il existe $w$ une constante de $p\OC_{\breve{K}}\subset \Of(S)$ telles que pour tout  sch\'ema de Raynaud $X/S$, 
\[ \forall i, d_i \circ c_i = w \id_{\Lf_{\chi_{i+1}}}. \] 
\end{prop}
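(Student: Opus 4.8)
The plan is to show that the scalar $d_i\circ c_i$ is pulled back from one fixed arithmetic constant, and then to detect its divisibility by $p$ on the special fibre. Since $\Lf_{\chi_{i+1}}$ is locally free of rank $1$, one has $\en(\Lf_{\chi_{i+1}})=\Of(S)$, so $d_i\circ c_i$ is a well-defined scalar $w_i(X)\in\Of(S)$; the real assertion is that this scalar is a single element of $\OC_{\breve{K}}$, the same for every Raynaud scheme and every $i$, and that it lies in $p\OC_{\breve{K}}$. First I would invoke the universal $\F_{q^{d+1}}$-vector space scheme of \cite{Rayn} over the universal base (finite over $\Z$, cf.\ the footnote), together with the fact that every Raynaud scheme $X/S$ is obtained from it by base change. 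As $c_i$ and $d_i$ are manufactured functorially out of the comultiplication and the multiplication of $\Of_X$, the scalar $w_i(X)$ is the pullback of the corresponding universal scalar $w_i$; in particular $w_i(X)$ lies in the image of $\OC_{\breve{K}}\to\Of(S)$ and does not depend on $X$.

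Next I would remove the dependence on $i$. The Frobenius $\sigma$ cyclically permutes the fundamental characters, $\chi_{i+1}=\chi_i^{p}$, and transports the datum $(\Lf_{\chi_i},c_i,d_i)$ to $(\Lf_{\chi_{i+1}},c_{i+1},d_{i+1})$ compatibly with multiplication and comultiplication; this identifies $w_{i+1}$ with $\sigma(w_i)$ and, the universal constant being $\sigma$-stable, forces all the $w_i$ to coincide with a single $w$. Moreover this $w$ is purely arithmetic: it is built from the universal model and does not involve the uniformiser $\varpi$, so it lies in the maximal unramified subring $W\subset\OC_{\breve{K}}$, for which $W/pW$ is a field of characteristic $p$. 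The Proposition is thereby reduced to the single statement $\bar{w}=0$ in $W/pW$, since this is equivalent to $w\in pW\subset p\OC_{\breve{K}}$.

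To prove $\bar{w}=0$ I would test the identity $d_i\circ c_i=w\cdot\id$ on a connected Raynaud scheme in characteristic $p$, for instance the connected part of $\XG[\Pi_D]$ on the special fibre, the prototype being the additive Frobenius kernel $\spec\,\F_q[x]/(x^{p})$. There the multiplication of $\Of_X$ carries $\Lf_{\chi_i}^{\otimes p}$ into $p$-th powers, which vanish (as $x^{p}=0$ in the prototype); hence $d_i\equiv 0$, so $d_i\circ c_i=0=w\cdot\id_{\Lf_{\chi_{i+1}}}$. As $\id\neq 0$, the image of $w$ in this characteristic-$p$ ring, and a fortiori in $W/pW$, is $0$. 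This gives $w\in p\OC_{\breve{K}}$ and completes the proof.

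The step I expect to be the true obstacle is the first one: establishing that the datum $(\Lf_{\chi},c,d)$ of an arbitrary Raynaud scheme is genuinely base-changed from one universal model over a ring finite over $\Z$, so that $d_i\circ c_i$ is literally a constant independent of $X$ rather than merely a function of $X$. This is exactly the classification of \cite{Rayn} and carries all the weight; once it and the Frobenius symmetry are in hand, the fact that $W/pW$ is a field turns the membership $w\in p\OC_{\breve{K}}$ into the elementary vanishing $\bar{w}=0$, supplied by the connected example by inspection.
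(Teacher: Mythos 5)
First, a point of reference: the paper gives no proof of this statement — it is quoted directly from Raynaud (\cite{Rayn}, proposition 1.3.1) — so your attempt has to be measured against Raynaud's own argument rather than against anything in the text.

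The genuine gap is your first step, and you have in fact identified it yourself. You assert that every Raynaud scheme $X/S$ is obtained by base change from a single universal object over a ring finite over $\Z$, so that $d_i\circ c_i$ is automatically the pullback of one arithmetic constant. That is not true as stated: a Raynaud scheme over $S$ consists of the line bundles $\Lf_{\chi_i}$ \emph{and} the maps $c_i,d_i$, which are genuine moduli beyond the choice of base ring; the universal object lives over a moduli space of such data, and the assertion that the function $d_i\circ c_i$ on this moduli space is constant, equal to a fixed element of $p\OC_{\breve{K}}$, is precisely the content of the proposition to be proved. Appealing to ``the classification of \cite{Rayn}'' to supply this step is circular: the classification (Théorème \ref{theoclassicraynaud}) is stated with the condition $d_i\circ c_i=w\,\id$ already built into the data being classified, and in \cite{Rayn} it is proved \emph{after}, and by means of, Proposition 1.3.1. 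What actually carries the weight in Raynaud's proof — the analysis of the Hopf-algebra axioms (comultiplication being an algebra map, coassociativity, the counit) on the generic datum, which is what forces the composite $d_i\circ c_i$ to take a prescribed universal value — is entirely absent from your argument. The same remark applies to the Frobenius step identifying the $w_i$ with one another: it presupposes the universal setup it is meant to feed into.

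The second half of your argument is, by contrast, a correct way to pin down the constant \emph{once} constancy is established: evaluating on a connected Raynaud scheme with connected dual in characteristic $p$ (where all $y_i^p=0$) gives $d_i=0$, hence the image of $w$ in the residue field vanishes; and since $w$ comes from the unramified coefficient ring, this upgrades to $w\in p\OC_{\breve{K}}$ rather than merely $w\in\mG_{\breve{K}}$ — a distinction that matters when $K/\Q_p$ is ramified, and which you rightly do not skip. But without an honest proof of the first step the argument does not stand on its own; as written it reduces the proposition to itself.
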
 

Nous pouvons \'enoncer la classification de Raynaud.

\begin{theo}[\cite{Rayn} théorème 1.4.1]
\label{theoclassicraynaud}
Soit $S$ un sch\'ema sur $\OC_{\breve{K}}$, l'application 
\[ X/S \mapsto (\Lf_{\chi_i}, c_i, d_i)_{i} \] 
induit une bijection des schémas de Raynaud sur $S$ à isomorphisme près vers les familles de faisceaux inversibles $(\Lf_i)_i$ sur $S$ munis de morphismes $c_{i}: \Lf_{i+1} \to \Lf_{i}^{\otimes p}$ et     
$d_{i}: \Lf_{i}^{\otimes p} \to \Lf_{i+1}$ v\'erifiant $d_i \circ c_i= w \id_{\Lf_{i+1}}$.  
\end{theo}

\begin{coro}\label{cororaynlib}
Soit $S=\spf(A)$ un sch\'ema formel sur $\OC_{\breve{K}}$ et $X/S$ un sch\'ema de Raynaud libre, on peut trouver des sections $(v_i)_{i \in \Z/(d+1)\Z}$ dans $A$ telles que\footnote{Notons que nous n'avons décrit uniquement les sections globales en tant qu'algèbre. La comultiplication peut aussi être donnée explicitement caractérisant ainsi la structure d'algèbre de Hopf. Nous ne nous en servirons pas pour la suite.} 
\[ X=\spf(A[y_0, \dots, y_{f(d+1)-1}]/(y_i^p-v_i y_{i+1})). \] 
La famille $(v_i)_i$ est unique modulo la relation d'\'equivalence $(v_i)_i \sim (v'_i)_i$ si et seulement si il existe $(u_i)_i$ dans $A^*$ telle que $v'_i=v_i \frac{u_i^p}{u_{i+1}}$. 
\end{coro}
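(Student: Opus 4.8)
Le plan est de lire directement l'énoncé sur la classification de Raynaud \ref{theoclassicraynaud}, en rendant libres les faisceaux fondamentaux. Puisque $X/S$ est un schéma de Raynaud libre, chaque partie isotypique fondamentale $\Lf_i = \Lf_{\chi_i}$ est un $\Of_S$-module libre de rang $1$; je choisirais un générateur $e_i$ de $\Lf_i$ et noterais $y_i \in \IC \subset \Of(X)$ l'élément correspondant. Comme $\chi_i^p = \chi_{i+1}$, le morphisme de multiplication $d_i : \Lf_i^{\otimes p} \to \Lf_{i+1}$ envoie le générateur $e_i^{\otimes p}$ sur une section $v_i e_{i+1}$ avec $v_i \in A$, ce qui donne exactement la relation $y_i^p = v_i y_{i+1}$ (indices pris modulo $f(d+1)$).

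Je vérifierais ensuite que ces relations présentent l'algèbre $\Of(X)$. Le choix des $e_i$ fournit un morphisme de $A$-algèbres $B := A[y_0, \dots, y_{f(d+1)-1}]/(y_i^p - v_i y_{i+1}) \to \Of(X)$. Pour la surjectivité, j'invoquerais le lemme \ref{lemisot} et la construction des $d_\chi$ qui le précède : toute partie isotypique $\Lf_\chi$, pour $\chi = \prod_i \chi_i^{\alpha_i}$ avec $0 \le \alpha_i \le p-1$, est l'image par l'isomorphisme $d_\chi$ de $\bigotimes_i \Lf_i^{\otimes \alpha_i}$, de sorte que le monôme $\prod_i y_i^{\alpha_i}$ engendre $\Lf_\chi$. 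Comme $\Of(X) = \Of_S \oplus \bigoplus_\chi \Lf_\chi$, les $p^{f(d+1)}$ monômes réduits $\prod_i y_i^{\alpha_i}$ avec $0 \le \alpha_i \le p-1$ engendrent $\Of(X)$ sur $A$. Or $X$ est fini plat de rang $q^{d+1} = p^{f(d+1)}$, donc $\Of(X)$ est libre de ce rang sur $A$; une famille génératrice de cardinal égal au rang d'un module libre de type fini en est une base, ce qui force le morphisme $B \to \Of(X)$ à être un isomorphisme. En particulier les relations $y_i^p = v_i y_{i+1}$ n'en engendrent aucune superflue et l'on obtient la présentation annoncée.

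Il resterait à identifier l'ambiguïté sur $(v_i)_i$. Un autre choix de générateurs $e'_i = u_i e_i$ avec $u_i \in A^*$ remplace $y_i$ par $u_i y_i$ et, par la même formule $d_i((u_i e_i)^{\otimes p}) = u_i^p v_i e_{i+1} = (v_i u_i^p / u_{i+1})(u_{i+1} e_{i+1})$, conduit à $v'_i = v_i u_i^p / u_{i+1}$; réciproquement toute famille ainsi reliée à $(v_i)_i$ provient d'un changement de trivialisation. Puisque la bijection de \ref{theoclassicraynaud} identifie $X$ (pour sa structure d'algèbre) à la donnée des $(\Lf_i, d_i)$ à isomorphisme près, et que fixer les $e_i$ revient précisément à trivialiser les $\Lf_i$, l'ensemble des $(v_i)_i$ admissibles est exactement une classe pour la relation d'équivalence décrite. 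Le point le plus délicat est de s'assurer que seule la partie multiplicative $d_i$ de la classification intervient dans la présentation de l'algèbre : la comultiplication, encodée par les $c_i$, ne concerne que la structure de Hopf et n'impose aucune contrainte supplémentaire sur les $v_i$, comme l'indique la note de bas de page de l'énoncé.
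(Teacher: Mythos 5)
Votre démonstration suit pour l'essentiel la même route que celle du texte : trivialiser les fibrés isotypiques fondamentaux, lire les relations $y_i^p=v_i y_{i+1}$ sur les morphismes de multiplication $d_i$, utiliser le lemme \ref{lemisot} pour voir que les monômes $\prod_i y_i^{\alpha_i}$ engendrent chaque $\Lf_\chi$ et en tirer la surjection $A[y_0,\dots,y_{f(d+1)-1}]/(y_i^p-v_iy_{i+1})\to\Of(X)$, puis obtenir l'unicité par changement de trivialisation $y_i\mapsto u_iy_i$. La seule (inoffensive) divergence est que vous justifiez l'injectivité par un décompte de rang sur le $A$-module libre $\Of(X)$ de rang $p^{f(d+1)}$, là où le texte invoque directement la classification de Raynaud \ref{theoclassicraynaud}.
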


\begin{proof}
Fixons un caract\`ere $\chi= \prod_i \chi_i^{\alpha_i}$ et choisissons $y_i \in \Of(X)$ un g\'en\'erateur de $\Lf_{\chi_i}$ pour tout caract\`ere fondamental $\chi_i$. Dans ce cas, $\Lf_{\chi}$ est engendr\'e par $\prod y_i^{\alpha_i}$ d'après \ref{lemisot} et le morphisme $d_i : \Lf_{\chi_i}^{\otimes p} \to \Lf_{\chi_{i+1}}$ induit une relation $y_i^p=v_iy_{i+1}$ avec $v_i \in A$. Comme $X=\spf(A \bigoplus_{\chi} \Lf_{\chi}(S))$, on en déduit une flèche surjective :
\[ A[y_0, \dots, y_{f(d+1)-1}]/(y_i^p-v_i y_{i+1})\to \Of(X). \] 
C'est un bijection par classification de Raynaud \ref{theoclassicraynaud}. Le choix d'un autre jeu de g\'en\'erateurs $(y_i')_i=(y_iu_i)_i$ avec $u_i \in A^*$ entraîne l'\'ecriture 
\[ \Of(X)=A[y'_0, \dots, y'_{f(d+1)-1}]/({y'_i}^p-v_i \frac{u_i^p}{u_{i+1}} y'_{i+1}). \] 
 Ainsi choisir une classe d'\'equivalence des $(v_i)_i$ d\'etermine les morphismes $d_i$ et aussi les morphismes $c_i$ via la relation $d_i \circ c_i= w \id_{\Lf_{\chi_{i+1}}}$.   Ceci entraîne l'unicité des $(v_i)_i$  par classification des schémas de Raynaud.
\end{proof}

Expliquons maintenant le lien avec les $\OC_D$-modules formels sp\'eciaux. Dor\'enavant, $X$ d\'esignera un $\OC_D$-module formel. Dans ce cas, $X[\Pi_D]$ est un sch\'ema en $\F_{q^{d+1}}$-espaces vectoriels. 
 On notera encore $\IC$, $\Lf_{\chi_i}$ l'id\'eal d'augmentation et les parties isotypiques de $X[\Pi_D]$.  

\begin{defi}\label{deficar}

On écrira $\Lambda$ le sous-ensemble des caractères fondamentaux qui sont $\F$-linéaires. Quitte à réaliser une  permutation sur les caractères fondamentaux, on peut supposer $\chi_0\in\Lambda$. Dans ce cas, on a l'égalité \[\Lambda=\{\chi_0^{q^i}:0\le i\le d\}=\{\chi_{fi}:0\le i\le d\}.\]
Pour simplifier, on pourra écrire $\Lf_i=\Lf_{\chi_{fi}}$.

\end{defi}

La proposition suivante exhibe le lien entre les faisceaux $\Lf_{\chi_i}$ et la d\'ecomposition en parties isotypiques de $\lie(X)$ (voir \ref{remlierep}). 
\begin{prop}
\label{propliexpi}
On a d'une part 
\[ \omega_{X[\Pi_D]/S}= \IC/\IC^2= \bigoplus_{j\in\Z/f(d+1)\Z} \Lf_{\chi_{j}}/ d_j(\Lf_{\chi_{j-1}}^{\otimes p}) \]
et d'autre part 
\[ \omega_{X[\Pi_D]/S}= \omega_{X/S}/\Pi_{D,*}\omega_{X/S}= \bigoplus_{i\in\Z/(d+1)\Z}(\lie (X)_{d+1-i})^\vee / \Pi_D^*(\lie (X)_{d+2-i})^\vee. \] 
Ces décompositions font intervenir les parties isotypiques de $\omega_{X[\Pi_D]/S}$, et on en déduit :
\[\forall j \in \Z/f(d+1)\Z,\Lf_{\chi_{j}}/d_{j-1}(\Lf_{\chi_{j-1}}^{\otimes p})\cong \begin{cases}(\lie (X)_{d+1-i})^\vee / \Pi(\lie (X)_{d+2-i})^\vee & \si j=fi\in f\Z/f(d+1),\\ 0 &\sinon\end{cases}.\]
\end{prop}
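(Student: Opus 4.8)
Le plan est de travailler localement sur $S$, là où $X[\Pi_D]$ est un schéma de Raynaud libre (Corollaire \ref{cororaynlib}), d'établir séparément les deux descriptions de $\omega_{X[\Pi_D]/S}$, puis de les confronter pour obtenir l'isomorphisme final et l'annulation des parties non $\F$-linéaires. Comme les décompositions isotypiques sous $\F_{q^{d+1}}^*$ (resp.\ la graduation sous $\OC_{(d+1)}$) sont canoniques, les descriptions locales se recollent en l'énoncé global.

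Pour la première égalité, je rappellerais que $\omega_{X[\Pi_D]/S} = \IC/\IC^2$ est l'espace cotangent à l'origine du schéma en groupes fini et plat $X[\Pi_D]$, compatible à la décomposition isotypique. J'utiliserais alors la présentation explicite du Corollaire \ref{cororaynlib}: localement $\Of(X[\Pi_D]) = A[y_0, \dots, y_{f(d+1)-1}]/(y_i^p - v_i y_{i+1})$, où $y_i$ engendre $\Lf_{\chi_i}$ et où les monômes $\prod_i y_i^{\alpha_i}$ avec $0 \le \alpha_i \le p-1$ forment une base de $\Of(X[\Pi_D])$ sur $A$, celui associé à $\chi = \prod_i \chi_i^{\alpha_i}$ engendrant $\Lf_\chi$. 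Dans $\IC/\IC^2$, tout monôme de degré total $\geq 2$ tombe dans $\IC^2$: les parties isotypiques pour les caractères non fondamentaux s'annulent donc. De plus la relation $y_{j-1}^p = v_{j-1} y_j$ montre que $d_{j-1}(\Lf_{\chi_{j-1}}^{\otimes p}) = v_{j-1}\Lf_{\chi_j}$ est dans $\IC^2$, de sorte que la partie isotypique pour $\chi_j$ vaut exactement $\Lf_{\chi_j}/d_{j-1}(\Lf_{\chi_{j-1}}^{\otimes p})$, d'où la première décomposition.

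Pour la seconde égalité, je partirais de la suite conormale associée à l'isogénie $\Pi_D : X \to X$ de noyau $X[\Pi_D]$, soit $\omega_{X/S} \to \omega_{X/S} \to \omega_{X[\Pi_D]/S} \to 0$ où la première flèche est induite par $\Pi_D$, d'où $\omega_{X[\Pi_D]/S} = \omega_{X/S}/\Pi_{D,*}\omega_{X/S}$. Comme $X$ est spécial, $\lie(X)$ est localement libre de rang $1$ sur $\OC_{(d+1)} \otimes_{\OC_K} \Of_S$, ce qui fournit la $\Z/(d+1)\Z$-graduation $\lie(X) = \bigoplus_i \lie(X)_i$ indexée par les $d+1$ plongements $\F_q$-linéaires du corps résiduel $\F_{q^{d+1}}$ de $\OC_{(d+1)}$. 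La relation $\Pi_D a = \sigma(a)\Pi_D$ montre que $\Pi_D$ décale cette graduation d'un cran; en dualisant on obtient la décomposition annoncée $\bigoplus_i (\lie(X)_{d+1-i})^\vee / \Pi_D^*(\lie(X)_{d+2-i})^\vee$.

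La dernière étape, qui est l'obstacle principal, est la confrontation des deux décompositions d'un même faisceau. Le point-clé est d'identifier les caractères fondamentaux $\F$-linéaires $\Lambda = \{\chi_{fi}\}_{0 \le i \le d}$ de \ref{deficar} aux $d+1$ plongements $\F_q$-linéaires définissant la graduation de $\lie(X)$. Comme l'action de $\OC_{(d+1)}$ sur $\omega_{X/S}$, et donc sur son quotient $\omega_{X[\Pi_D]/S}$, ne fait intervenir que ces plongements, la décomposition isotypique de $\omega_{X[\Pi_D]/S}$ sous $\F_{q^{d+1}}^*$ est concentrée sur les $\chi_{fi}$. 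On en déduit que $\Lf_{\chi_j}/d_{j-1}(\Lf_{\chi_{j-1}}^{\otimes p})$ s'annule dès que $j \notin f\Z$ et s'identifie à $(\lie(X)_{d+1-i})^\vee/\Pi_D^*(\lie(X)_{d+2-i})^\vee$ lorsque $j = fi$. La vérification fine du décalage d'indices, et donc de l'appariement exact entre $i$ et $fi$, est la partie la plus délicate, tout le reste étant formel une fois la présentation de Raynaud et la suite conormale en main.
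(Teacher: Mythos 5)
Votre preuve est correcte et suit pour l'essentiel la même démarche que celle du texte : décomposition isotypique de $\IC/\IC^2$, identification de $\Lf_{\chi_j}\cap\IC^2$ avec $d_{j-1}(\Lf_{\chi_{j-1}}^{\otimes p})$, description de $\omega_{X[\Pi_D]/S}$ comme conoyau de $\Pi_D$ sur $\omega_{X/S}$ gradué par les plongements de $\OC_{(d+1)}$, puis confrontation des deux décompositions isotypiques. Les seules variantes (passage par la présentation libre locale de \ref{cororaynlib} plutôt que par \ref{lemisot} directement, suite conormale plutôt que dualisation de $\lie(X[\Pi_D])=\ker\Pi_D^*$) sont cosmétiques et n'affectent pas l'argument.
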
 

\begin{proof}
L'égalité $\omega_{X[\Pi_D]/S}= \IC/\IC^2$ s'obtient par définition. La décomposition en parties isotypiques passe au quotient et devient :
\[\omega_{X[\Pi_D]/S}=\bigoplus_{j\in\Z/f(d+1)\Z} \Lf_{\chi_{j}}/\Lf_{\chi_{j}}\cap \IC^2.\]
Il reste à prouver l'égalité $\Lf_{\chi_{j}}\cap \IC^2=d_{j-1}(\Lf_{\chi_{j-1}}^{\otimes p})$. L'inclusion en  sens indirect est claire. Pour le sens direct, cela découle de \ref{lemisot} et de l'associativité de la multiplication.

On rappelle que $X[\Pi_D]= \ker (\Pi_D : X \to X)$ ce qui entraîne l'égalité $\lie (X[\Pi_D])= \ker (\Pi_D^* : \lie (X) \to \lie (X))$. En passant au dual, on obtient $\omega_{X[\Pi_D]/S}= \omega_{X/S}/\Pi_{D,*}\omega_{X/S}$. On rappelle que l'endomorphisme $\Pi_D$ agit sur $\lie (X)$ en permutant de manière circulaire les parties isotypiques fondamentales $\F$-linéaires. La décomposition en parties isotypiques s'en déduit :
\[\omega_{X/S}/\Pi_{D,*}\omega_{X/S}= \bigoplus_{i\in\Z/(d+1)\Z}(\lie (X)_{d+1-i})^\vee / \Pi_D^*(\lie (X)_{d+2-i})^\vee.\]
\end{proof}

On en d\'eduit que si $X$ est sp\'ecial, $X[\Pi_D]$ est un sch\'ema de Raynaud \cite{wa}\footnote{Notons que \cite[4.15]{vh} donne une condition suffisante pour que la réciproque soit vérifiée.}. 
\begin{coro}\label{propxpidlib}

Soit $S=\spf(A)$ un sch\'ema formel sur $\OC_{\breve{K}}$ et $X/S$ un module formel spécial tel que le schéma de Raynaud $X[\Pi_D]$ soit libre, alors
\[ X[\Pi_D]=\spf(A[y_0, \dots, y_{d}]/(y_i^q-v_i y_{i+1})). \] 
La famille $(v_i)_i$ est unique modulo la relation d'\'equivalence $(v_i)_i \sim (v'_i)_i$ si et seulement si il existe $(u_i)_i$ dans $A^*$ telle que $v'_i=v_i \frac{u_i^q}{u_{i+1}}$. 

\end{coro}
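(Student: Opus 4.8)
Le plan est de déduire l'énoncé de la présentation générale obtenue au Corollaire \ref{cororaynlib}, l'idée étant que l'hypothèse de spécialité fait s'effondrer les $f(d+1)$ caractères fondamentaux sur les $d+1$ qui sont $\F$-linéaires (les $\chi_{fi}$ de \ref{deficar}). Le Corollaire \ref{cororaynlib} fournit des générateurs $y_0, \dots, y_{f(d+1)-1}$ avec $X[\Pi_D] = \spf(A[y_0, \dots, y_{f(d+1)-1}]/(y_j^p - v_j y_{j+1}))$, la famille $(v_j)_j$ étant unique modulo $v'_j = v_j u_j^p/u_{j+1}$. Je poserais $\tilde{y}_i := y_{fi}$ pour $0 \le i \le d$ comme candidats pour les générateurs de la présentation en puissances $q$-ièmes recherchée.

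Je commencerais par déterminer quelles flèches de structure sont inversibles. La Proposition \ref{propliexpi} calcule les parties isotypiques de $\omega_{X[\Pi_D]/S} = \IC/\IC^2$ et, puisque $X$ est spécial de sorte que ce faisceau est concentré sur les parties isotypiques $\F$-linéaires, elle donne en particulier $\Lf_{\chi_{j+1}}/d_j(\Lf_{\chi_j}^{\otimes p}) = 0$ dès que $j+1 \ne fi$. Une surjection de modules inversibles sur $A$ étant un isomorphisme, ceci force $v_j \in A^*$ pour tout $j \not\equiv f-1 \pmod f$ ; seules les $d+1$ flèches aboutissant aux caractères $\F$-linéaires peuvent ne pas être inversibles. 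Pour un tel $v_j$ inversible, la relation $y_{j+1} = v_j^{-1} y_j^p$ exprime une variable intermédiaire comme une unité fois une puissance $p$-ième, et en itérant à l'intérieur de chaque bloc de longueur $f$ on obtient $y_{fi+r} = (\text{unité}) \cdot \tilde{y}_i^{p^r}$ pour $0 \le r \le f-1$. En reportant dans $y_{fi+f-1}^p = v_{fi+f-1}\,\tilde{y}_{i+1}$ on produit $\tilde{y}_i^q = w_i \tilde{y}_{i+1}$ avec $w_i := \prod_{s=0}^{f-1} v_{fi+s}^{p^{f-1-s}} \in A$ ; l'élimination des $y_j$ intermédiaires identifie alors l'algèbre de \ref{cororaynlib} à $A[\tilde{y}_0, \dots, \tilde{y}_d]/(\tilde{y}_i^q - w_i \tilde{y}_{i+1})$.

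Pour l'unicité, je transporterais la relation d'équivalence de \ref{cororaynlib}. En substituant $v'_j = v_j u_j^p/u_{j+1}$ dans la formule pour $w_i$ et en télescopant les facteurs en $u$ sur chaque bloc, ceux-ci se réduisent à $u_{fi}^q/u_{f(i+1)}$, de sorte que $w'_i = w_i \tilde{u}_i^q/\tilde{u}_{i+1}$ avec $\tilde{u}_i := u_{fi}$ ; réciproquement, le changement de générateurs $\tilde{y}_i \mapsto \tilde{u}_i \tilde{y}_i$ réalise une telle transformation, et le calcul montre que les $u_j$ intermédiaires sont sans effet. C'est exactement l'équivalence annoncée, et l'unicité des $(v_j)_j$ de \ref{cororaynlib} redescend en l'unicité des $(w_i)_i$. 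Je m'attends à ce que la seule vraie difficulté soit de nature comptable : vérifier que les $v_j$ éventuellement non inversibles se trouvent exactement en $j \equiv f-1 \pmod f$ et maintenir correctement l'indexation $f$-périodique, après quoi l'élimination des variables et le télescopage sont entièrement forcés.
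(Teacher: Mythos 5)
Votre démonstration est correcte et suit essentiellement la même route que celle du texte : réduction au Corollaire \ref{cororaynlib}, invertibilité des $v_j$ intermédiaires via la Proposition \ref{propliexpi} (nullité des parties isotypiques non $\F$-linéaires de $\omega_{X[\Pi_D]/S}$), élimination des variables bloc par bloc (l'astuce de la note \eqref{foouni}), puis transport de la relation d'équivalence. Vous explicitez simplement davantage la comptabilité ($w_i = \prod_{s=0}^{f-1} v_{fi+s}^{p^{f-1-s}}$ et le télescopage en $u_{fi}^q/u_{f(i+1)}$) que le texte laisse implicite.
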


\begin{proof}

Nous savons déjà (voir \ref{cororaynlib}):
\[ X[\Pi_D]=\spf(A[\tilde{y}_0, \dots, \tilde{y}_{f(d+1)-1}]/(\tilde{y}_i^p-\tilde{v}_i \tilde{y}_{i+1})). \] 

Si $\chi_i$ n'est pas un caractère $\F$-linéaire, alors $(\omega_{X[\Pi_D]/S})_{\chi_i}=0$ d'où $\Lf_{\chi_i}=d_{i-1}(\Lf_{\chi_{i-1}}^{\otimes p})$ (d'après \ref{propliexpi}) ou, dit autrement, $\tilde{v}_{i-1}\in A^*$. 
On peut ainsi exprimer $\tilde{y}_i$ en fonction de $\tilde{y}_{i-1}$. Ce qui entraîne\footnote{Nous avons ici utilisé une astuce qui nous servira dans le reste du texte. Nous avons décidé de l'énoncer ici dans le cas le plus général pour pouvoir le citer plus tard. Donnons-nous un anneau $A$ et une algèbre sur $A$ de la forme $B=A[x_1,\cdots,x_r]/I$ avec $I$ un idéal contenant des éléments $(x_i^p-v_i x_{i+1})_{1\le i<r}$ où $v_i\in A$. Si les fonctions $v_i$ sont inversibles, on a $B=A[x_1]/I'$ avec $I'=I\cap A[x_1]$, car on peut établir  par récurrence sur $2\le s\le r$ \[x_s=\frac{1}{v_{s-1}}x_{s-1}^p=\frac{1}{v_{s-1}v_{s-2}^p\cdots v_{1}^{p^{s-1}}}x_{1}^{p^{s-1}}\] où le cas $s=2$ est donné par les hypothèses. \label{foouni} } 

\[ X[\Pi_D]=\spf(A[y_0, \dots, y_{d}]/(y_i^q-v_i y_{i+1})). \]

Pour l'unicité des sections $(v_i)_i$, il suffit d'observer que chaque variable $y_i$ est vu comme un générateur du fibré $\Lf_i$ et on peut appliquer l'argument de \ref{cororaynlib}.
\end{proof}
\subsection{Application à $\XG[\Pi_D]$}

Int\'eressons-nous maintenant \`a $\XG$ le module formel sp\'ecial universel sur $\H^d_{\OC_{\breve{K}}}$. On fixe $s$ le sommet standard et $\sigma$ l'ar\^ete standarde de type $(1,d)$ et on note $\XG[\Pi_D]_s$ et $\XG[\Pi_D]_{\sigma}$ la restriction de $\XG[\Pi_D]$ au-dessus de $s$ et de $\sigma$. Le r\'esultat pr\'ec\'edent appliqu\'e au tube au-dessus de $s$ et de $\sigma$ nous donne 

\begin{coro}\label{coropic}
Il existe une fonction $u^{(s)}\in \hat{A}^*_s$ unique modulo $(\hat{A}^*_s)^N$ et des fonctions inversibles $u_0^{(\sigma)}, u_1^{(\sigma)} \in \hat{A}^*_{\sigma}$ telles que 
\[ \XG[\Pi_D]_s= \spf(\hat{A}_s [ y_1]/(y_1^{q^{d+1}}-\varpi u^{(s)} y_1 )) \]
\[ \XG[\Pi_D]_{\sigma}= \spf(\hat{A}_{\sigma}[y_0, y_1]/(y_1^{q^d}- u_d^{(\sigma)} x_0 y_0, y_0^{q}-u_0^{(\sigma)} x_d y_1)) \]  
avec $\hat{A}_s= \Of(\H^d_{\OC_{\breve{K}},s})$ et  $\hat{A}_{\sigma}= \Of(\H^d_{\OC_{\breve{K}},\sigma})$. 
\end{coro}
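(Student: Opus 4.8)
Le plan est d'appliquer directement le Corollaire \ref{propxpidlib} aux modèles formels $S=\spf(\hat{A}_s)$ puis $S=\spf(\hat{A}_\sigma)$. La condition à vérifier pour ce faire est que le schéma de Raynaud $\XG[\Pi_D]$ soit \emph{libre} au-dessus de ces tubes, c'est-à-dire que ses parties isotypiques $\Lf_{\chi_i}$ y soient des fibrés triviaux. Je ramènerais ce point à l'annulation du groupe de Picard de $\hat{A}_s$ et de $\hat{A}_\sigma$, que l'on contrôle grâce à la description explicite \eqref{eqhatasigma} : ce sont des complétés $\varpi$-adiques de localisés d'anneaux de polynômes sur $\OC_K$ (modulo $\prod_i X_{d_i}=\varpi$), dont on maîtrise les diviseurs ; au sommet, le cas est d'ailleurs déjà couvert par le Théorème \ref{theoniventiersigmasommet} de Teitelbaum--Wang, qui fournit directement une présentation à une variable. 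La liberté acquise, le Corollaire \ref{propxpidlib} livre une présentation
\[\XG[\Pi_D]|_S=\spf\big(\Of(S)[y_0,\dots,y_d]/(y_i^q-v_iy_{i+1})\big)\]
avec des sections $(v_i)_i$ uniques modulo la relation $v_i'=v_i\,u_i^q/u_{i+1}$.

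Le cœur de l'argument est ensuite de déterminer lesquelles des $v_i$ sont inversibles et lesquelles portent les coordonnées de bord. J'invoquerais pour cela la Proposition \ref{propliexpi} : le quotient isotypique $\Lf_{\chi_{fi}}/d_{fi-1}(\Lf_{\chi_{fi-1}}^{\otimes p})$ est non nul exactement lorsque le morphisme $\Pi_D$ correspondant sur $\lie(\XG)$ n'est pas un isomorphisme, et, via le diagramme universel \eqref{diagrammeuniv}, ce défaut se lit sur les coordonnées $X_{d_j}$. Au-dessus du sommet $s$, la seule coordonnée de bord est $X_d=\varpi$ : un unique $v_i$ est non inversible et vaut $\varpi$ fois une unité, les $d$ autres étant des unités. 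Au-dessus de l'arête $\sigma$ de type $(1,d)$, ce sont les deux coordonnées $x_0$ et $x_d$ (avec $x_0x_d=\varpi$) qui apparaissent, donc exactement deux des $v_i$ sont non inversibles, séparées le long du cycle $\Z/(d+1)\Z$ par des suites d'isomorphismes de longueurs $e_0-1=0$ et $e_1-1=d-1$.

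Il reste à éliminer les variables attachées aux $v_i$ inversibles grâce à l'astuce de \ref{foouni}. Au sommet, on réduit les $d+1$ variables à la seule variable $y_1$ : le chaînage des relations $y_i^q=v_iy_{i+1}$ donne
\[y_1^{q^{d+1}}=\big(\textstyle\prod_i v_i^{q^{a_i}}\big)\,y_1=\varpi\,u^{(s)}\,y_1,\]
où le regroupement du produit absorbe l'unique facteur $\varpi$ et laisse une unité $u^{(s)}\in\hat{A}_s^*$, d'où la présentation voulue. À l'arête, on conserve les deux variables $y_0,y_1$ aux deux positions non inversibles, et le chaînage des suites d'isomorphismes intermédiaires (de longueurs $d$ et $1$ dictées par le type $(1,d)$) produit les relations $y_1^{q^d}=u_d^{(\sigma)}x_0y_0$ et $y_0^q=u_0^{(\sigma)}x_dy_1$. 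Enfin, l'unicité de $u^{(s)}$ modulo $(\hat{A}_s^*)^N$ découle de l'unicité des $(v_i)_i$ du Corollaire \ref{propxpidlib} et \ref{cororaynlib} : un changement de générateur $y_1\mapsto u\,y_1$ multiplie $u^{(s)}$ par $u^{q^{d+1}-1}=u^N$. Le principal obstacle technique sera l'étape de liberté (l'annulation du Picard de ces modèles entiers), le reste n'étant qu'une mise en forme soigneuse des résultats déjà établis.
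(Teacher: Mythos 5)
Votre démonstration suit pour l'essentiel la même démarche que celle de l'article : annulation du groupe de Picard des tubes pour obtenir la liberté du schéma de Raynaud, application de \ref{propxpidlib}, identification des $v_i$ non inversibles via \ref{propliexpi} et le diagramme universel \eqref{diagrammeuniv}, élimination des variables intermédiaires par l'astuce de la note \eqref{foouni}, et unicité déduite de celle des $(v_i)_i$ dans \ref{cororaynlib}. Deux points appellent toutefois une mise en garde. D'abord, invoquer le Théorème \ref{theoniventiersigmasommet} pour traiter le sommet serait circulaire : dans l'article, la preuve de ce théorème n'est achevée qu'\emph{après} \ref{coropic} et \ref{corouni}, qui en constituent les ingrédients. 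Ensuite, votre justification de l'annulation du Picard (des localisés d'anneaux de polynômes dont on maîtrise les diviseurs) ne suffit pas telle quelle pour l'arête : la fibre spéciale $\hat{A}_\sigma/\varpi$ n'y est pas intègre puisque $x_0x_d=0$, donc elle n'est pas factorielle, et l'article doit recoller les deux composantes irréductibles $V(x_0)$ et $V(x_d)$ --- chacune factorielle séparément --- par un argument de type Mayer--Vietoris reposant sur des suites exactes scindées de groupes d'unités, avant de conclure via la suite exacte longue. Vous identifiez bien cette étape comme l'obstacle principal, mais la raison que vous esquissez ne s'applique directement qu'au sommet ; le reste de votre argument (positions des $v_i$ non inversibles, longueurs des chaînes $d$ et $1$ à l'arête, unicité modulo $(\hat{A}_s^*)^N$) est conforme à la preuve du texte.
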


\begin{proof}
 On prouve dans un premier temps l'annulation des groupes de Picard de $\H^d_{\OC_{\breve{K},s}}$ et $\H^d_{\OC_{\breve{K},\sigma}}$. Pour cela, on rappelle succinctement les arguments de Wang \cite[Lemme (2.4.6)]{wa}. D'apr\`es \cite[3.7.4]{frvdp}, il suffit de raisonner en fibre sp\'eciale. Pour le sommet, la fibre sp\'eciale est le spectre d'une $\bar{\F}$-algèbre factorielle de type fini. Pour l'ar\^ete $\sigma$, on note  $(x_i)_i$  système de coordonnées sur $\H^d_{\OC_{\breve{K},\sigma}}$ introduit dans \ref{paragraphhdkrectein}. La fibre sp\'eciale admet deux composantes irr\'eductibles $V(x_0)=V_0$ et $V(x_d)=V_d$. On a une suite exacte scind\'ee \footnote{voir aussi la preuve de  \ref{lempic} pour plus de détails(pas de risque d'argument circulaire...).} 
\[ 0 \to A/(X_0X_d) \to  A/(X_0) \times  A/(X_d) \to A/(X_0,X_d) \to 0 \]
avec $A= \bar{\F}[X_0, \dots, X_d]$. 
 Soit $\pG$ un point ferm\'e de la fibre sp\'eciale $\H^d_{\bar{\F}, \sigma}$ (resp. $V_0$, $V_d$, $V_{0,d}:=V(X_d) \cap V(X_0)$), l'anneau local en ce point est un localis\'e de $A/(X_0X_d)$ (resp. $A/X_0$, $A/X_d$, $A/(X_0,X_d)$). On a la m\^eme propri\'et\'e pour les sections globales de $\H^d_{\bar{\F}, \sigma}$, $V_i$ et $V_{0,d}$. On en d\'eduit les suites exactes scind\'ees 
 \[ 0 \to \Of^*(\H^d_{\bar{\F}, \sigma}) \to \Of^*(V_0)\times \Of^*(V_d) \to \Of^*(V_{0,d})\to 0, \]
 \[ 0 \to \Of^*_{\H^d_{\bar{\F}, \sigma}, \pG} \to \Of^*_{V_0, \pG} \times \Of^*_{V_d, \pG} \to \Of^*_{V_{0,d}, \pG} \to 0, \]   
\[ 0 \to \Of^*_{\H^d_{\bar{\F}, \sigma}} \to \iota_* \Of^*_{V_0} \times \iota_* \Of^*_{V_d} \to \iota_* \Of^*_{V_{0,d}} \to 0, \] 
o\`u $\iota_*$ d\'esigne les immersions ferm\'ees de $V(X_i)$ ou $V(X_0) \cap V(X_d)$ dans $\H^d_{\bar{\F}, \sigma}$. Notons que la deuxième entraîne directement la troisième.
   

La suite exacte longue associ\'ee induit l'exactitude de 
\[ \Of^*(V_{0,d})/( \Of^*(V_0)\times \Of^*(V_d)) \to \pic(\H^d_{\bar{\F}, \sigma}) \to \pic(V_0) \times \pic(V_d). \]
On a montr\'e que le premier quotient est trivial et $\pic(V_0)=\pic(V_d)=0$ car les deux ferm\'es sont affines de sections globales factorielles. On en d\'eduit l'annulation du groupe de Picard recherch\'ee. 
 
Ainsi les deux modules formels $\XG[\Pi_D]_s$ et $\XG[\Pi_D]_{\sigma}$ sont des sch\'emas de Raynaud libres, on a d'apr\`es \ref{cororaynlib}, 
\[ \XG[\Pi_D]_s = \spf(\hat{A}_s[y_0, \dots, y_d]/ (y_i^q- v^{(s)}_i y_{i+1})),\] 
\[ \XG[\Pi_D]_{\sigma} = \spf(\hat{A}_{\sigma}[y_0, \dots, y_d]/ (y_i^q- v^{(\sigma)}_i y_{i+1})). \] 
On a par l'identité  \ref{propliexpi} et \eqref{diagrammeuniv}  
\[\forall j \in \Z/f(d+1)\Z,\Lf_{\chi_{j}}/d_{j-1}(\Lf_{\chi_{j-1}}^{\otimes p})\cong \begin{cases} (T_{d})^{\vee}/\Pi((T_{0})^{\vee}) \cong \Of/\varpi \Of & \si j=f,\\ 0 &\sinon\end{cases}. \]
Ainsi, $v_i^{(s)}$ est inversible si et seulement si $i\neq d$ et on  $v_0^{(s)}=\varpi u_0$   avec $u_{0} \in \hat{A}_{s}^*$.  En particulier, en reprenant le raisonnement dans \eqref{foouni}, obtient l'\'ecriture  
\[ \XG[\Pi_D]_s= \spf(\hat{A}_s [ y_1]/(y_1^{q^{d+1}}-\varpi u^{(s)} y_1 )). \]
L'unicité de $u$ découle des conditions d'unicité sur les $v_i^{(s)}$ décrites dans  \ref{cororaynlib}.

De m\^eme sur $\sigma$, $v_i^{(\sigma)}$ est inversible si et seulement si $i \neq 0,d$ d'où $v_{0}^{(\sigma)}=x_d u_{0}$ et $v_d^{(\sigma)}= x_0 u_d$ avec $u_0^{(\sigma)},u_d^{(\sigma)}\in  \hat{A}_{\sigma}^*$. Ainsi, en reprenant le raisonnement dans \eqref{foouni}, on a 
l'écriture \[ \XG[\Pi_D]_{\sigma}= \spf(\hat{A}_{\sigma}[y_0, y_1]/(y_1^{q^d}- u_d^{(\sigma)} x_0 y_0, y_0^{q}-u_0^{(\sigma)} x_d y_1)). \]   

\end{proof}

\begin{coro}\label{corouni}

En reprenant les notations ci-dessus, on a la congruence :

\[u^{(s)}\equiv(u_d^{(\sigma)})^{q} u_0^{(\sigma)} x_0^{q-1} \pmod{(\hat{A}_s^{*})^N}. \]

\end{coro}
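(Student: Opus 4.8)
The plan is to exploit that the standard vertex $s$ is a face of the edge $\sigma$, so that $\H^d_{\OC_{\breve{K}},s}=\spf(\hat{A}_s)$ sits as an \emph{open} subscheme of $\H^d_{\OC_{\breve{K}},\sigma}=\spf(\hat{A}_\sigma)$, and to compare on this common locus the two presentations of $\XG[\Pi_D]$ furnished by \ref{coropic}. Concretely, deleting one vertex from $\sigma$ produces an open immersion (cf. the inclusion of functors $\FC_{\sigma'}\subset\FC_\sigma$), and the locus corresponding to $s$ is exactly the open subset of $\spf(\hat{A}_\sigma)$ on which the coordinate $x_0$ becomes invertible; there the relation $x_0x_d=\varpi$ of the type $(1,d)$ edge forces $x_d=\varpi x_0^{-1}$ with $x_0\in\hat{A}_s^*$.

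First I would restrict the edge presentation of $\XG[\Pi_D]_\sigma$ to this open locus. As $\XG[\Pi_D]$ is a global object on $\H^d_{\OC_{\breve{K}}}$, its restriction to $s$ computed through $\sigma$ agrees with $\XG[\Pi_D]_s$. Since $u_d^{(\sigma)}x_0$ is now invertible, the first relation $y_1^{q^d}=u_d^{(\sigma)}x_0\,y_0$ lets me solve $y_0=(u_d^{(\sigma)}x_0)^{-1}y_1^{q^d}$ and eliminate $y_0$, exactly as in the reduction steps of Corollaires~\ref{cororaynlib} et~\ref{coropic}. Substituting into the second relation $y_0^q=u_0^{(\sigma)}x_d\,y_1$ yields $(u_d^{(\sigma)}x_0)^{-q}y_1^{q^{d+1}}=u_0^{(\sigma)}x_d\,y_1$, that is,
\[
y_1^{q^{d+1}}=(u_d^{(\sigma)})^q\,u_0^{(\sigma)}\,x_0^{q}x_d\,y_1.
\]

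Then I would simplify the structure constant through $x_0^{q}x_d=x_0^{q-1}(x_0x_d)=\varpi\,x_0^{q-1}$, so that restricting the edge description produces
\[
\XG[\Pi_D]_s=\spf\!\big(\hat{A}_s[y_1]/(y_1^{q^{d+1}}-\varpi\,(u_d^{(\sigma)})^q u_0^{(\sigma)}x_0^{q-1}\,y_1)\big).
\]
Comparing with the vertex presentation $\XG[\Pi_D]_s=\spf(\hat{A}_s[y_1]/(y_1^{q^{d+1}}-\varpi u^{(s)}y_1))$ of \ref{coropic}, both exhibit the same Raynaud scheme over $\spf(\hat{A}_s)$. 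By the uniqueness clause of \ref{cororaynlib} the structure constant is well defined modulo $(\hat{A}_s^*)^N$ --- a change of generator $y_1\mapsto\lambda y_1$ scales it by $\lambda^{1-q^{d+1}}=\lambda^{-N}$ --- so the two constants coincide up to an $N$-th power, which is precisely the asserted congruence $u^{(s)}\equiv(u_d^{(\sigma)})^q u_0^{(\sigma)}x_0^{q-1}\pmod{(\hat{A}_s^*)^N}$.

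The principal difficulty is not the algebra, which is routine, but the geometric bookkeeping of the first step: one must check that $s$ matches the vertex $[M_1]$ of the chain $\varpi M_1\subsetneq M_0\subsetneq M_1$ (so that the $s$-locus is where $x_0$, and not $x_d$, is a unit) and that the coordinate $x_0=z_0/z_d$ is compatible across the two tubes. The opposite choice would eliminate $y_1$ rather than $y_0$ and produce the coefficient $u_d^{(\sigma)}(u_0^{(\sigma)})^{q^d}x_d^{q^d-1}$, so this identification is what pins down the exact exponents appearing in the congruence.
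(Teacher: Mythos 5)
Votre démonstration est correcte et suit essentiellement la même voie que celle de l'article : on localise la présentation sur l'arête en inversant $x_0$ (c'est bien l'ouvert correspondant au sommet $s$), on élimine $y_0$ comme dans la note \eqref{foouni} pour obtenir $y_1^{q^{d+1}}=\varpi(u_d^{(\sigma)})^q u_0^{(\sigma)}x_0^{q-1}y_1$ via $x_0^qx_d=\varpi x_0^{q-1}$, puis on conclut par l'unicité de $u^{(s)}$ modulo $(\hat{A}_s^*)^N$. La seule différence est que vous explicitez la vérification géométrique (le sommet $s$ est le lieu où $x_0$ est inversible, et non $x_d$), que l'article laisse implicite.
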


\begin{proof}

Il s'agit d'observer que $\XG[\Pi_{D}]_{s}= \spf(\Of(\XG[\Pi_{D}]_{\sigma})[1/x_0])$. Ainsi, en reprenant le raisonnement dans \eqref{foouni}, on a 
\[
\XG[\Pi_D]_s= \spf(\hat{A}_s [ y_1]/(y_1^{q^{d+1}}-\varpi (u_d^{(\sigma)})^{q} u_0^{(\sigma)} x_0^{q-1}y_1 )) 
\]
et on conclut par unicité de $u^{(s)}$. 

\end{proof}

On peut maintenant terminer la preuve de \ref{theoniventiersigmasommet} et déterminer un représentant de $u^{(s)}$. On a par lissité de $\H^d_{\OC_{\breve{K}}, s}$, \[\hat{A}^*_s = \OC_{\breve{K}}^*(1+\hat{A}^{++}_s) \prod_{H} ({\frac{l_H}{l_{H_0}}})^{\Z}\]  où $H$ parcourt un système des représentants $S_1$ des hyperplans modulo $\varpi$. Ainsi $\hat{A}^*_s/ (\hat{A}^*_s)^N=\prod_{H} ({\frac{l_H}{l_{H_0}}})^{\Z/N\Z}$ et $u^{(s)}$ peut être choisi de la forme $ \prod_{H} l_{H}^{\alpha_H}$ avec $\sum_H \alpha_H \equiv 0 \pmod{N}$. 

Mais $\XG[\Pi_D]_{s}$ admet une action de $\gln_{d+1}(\OC_K)$ équivariante sous $\F_{q^{d+1}}^*$. Cette action doit préserver  les parties isotypiques de l'idéal d'augmentation  et $g\cdot y_1$ est encore un générateur de $\Lf_1$ pour $g\in\gln_{d+1}(\OC_K)$ d'où $g\cdot y_1=\lambda_g y_1$ avec $\lambda_g \in \hat{A}^*_s$. De plus, on a la relation $(g\cdot y_1)^{q^{d+1}}=\varpi (g\cdot u^{(s)}) g\cdot y_1$. En particulier, \[y_1^{q^{d+1}}=\varpi (g\cdot u^{(s)}) \lambda_g^{-N} y_1\]
et l'unicité dans la classification de Raynaud entraîne que $u^{(s)}=(g\cdot u^{(s)}) \lambda_g^{-N}=(g\cdot u^{(s)})$ dans $ \hat{A}^*_s/ (\hat{A}^*_s)^N$ et $u^{(s)}$ est invariante  modulo $(\hat{A}^*_s)^N$ sous l'action de $\gln_{d+1}(\OC_K)$.  Comme l'action est transitive sur les hyperplans, $\alpha:= \alpha_H$ ne dépend pas de $H$ et $|S_1|\alpha \equiv  0 \pmod{N}$. Ainsi $(q-1)|\alpha$ car $|S_1|=N/(q-1)$. 

Le reste de la preuve consiste à relier ces entiers à des ordres d'annulation pour certains hyperplans d'un espace projectif bien choisi puis de calculer cet ordre pour un hyperplan particulier. Pour cela, on se place sur l'arête $\sigma$ considérée précédemment.  Le tube en niveau entier de cette arête $\H_{\OC_K, \sigma}^d=\spf(\hat{A}_\sigma)$ admet deux composantes irréductibles en fibre spéciale $V(x_0)$ et $V(x_d)$ et on a une immersion ouverte $V(x_d)\flinj\P^d_\F:=\proj(\F[z_0,\cdots, z_d])$ qui se factorise par l'ouvert $D^+(z_d)\subset \P^d_\F$ et qui est induite au niveau des sections globales par l'application 
\[\frac{z_i}{z_d}\in \Of(D^+(z_d))\mapsto x_i \in \hat{A}_\sigma/(x_d).\]
Si on compose cette flèche par le plongement naturel $\H_{\F, s}^d\flinj V(x_d)$, on obtient une autre immersion ouverte $\H_{\F, s}^d\flinj  \P^d_\F$ induite par l'application 
\[\frac{z_i}{z_d}\in \Of(D^+(z_d))\mapsto x_i \in \hat{A}_s\]
et l'image de ce morphisme s'identifie à $\P^d_\F\backslash \bigcup_{H\in \HC_1}H$. 

Pour une fonction inversible $u=\prod_{H} l_{H}^{\beta_H}$ sur $\H_{\F, s}^d$ vu comme une fonction méromorphe de $\P^d_\F$, les entiers $\beta_H$  correspondent aux ordres $\ord_H(u)$ en l'hyperplan $H\subset \P^d_\F$. Posons $H_0=V^+(z_0)$, on a la relation d'après \ref{corouni}
\[
\ord_{H_0}(u^{(s)})\equiv (q-1)\ord_{H_0}(x_0)+q\ord_{H_0}(u_d^{(\sigma)})+\ord_{H_0}(u_0^{(\sigma)})  \pmod{N}.
\]
Par construction, on a $\ord_{H_0}(x_0)=\ord_{H_0}(z_0/z_d)=1$. Notons aussi que
\[H_0\cap V(x_d) =\spec(\bar{A}_\sigma/(x_0,x_d))=\spec(\F[x_1,\cdots, x_{d-1}, 1/P_\sigma(0,x_1,\cdots, x_{d-1},0)])\neq \emptyset.\]
Ainsi, les fonctions inversibles sur $\H_{\F, \sigma}^d\supset V(x_d)$ comme $u_0^{(\sigma)}$ et $u_d^{(\sigma)}$ ont un ordre trivial en $H_0$. On en déduit que $\alpha=q-1$ ce qui conclut la preuve de \ref{theoniventiersigmasommet}.

\section{\'Equations pour le premier revêtement de l'espace symétrique de Drinfeld}

L'objectif de cette section est de prouver le théorème \ref{theointroeq} et ainsi donner une équation pour le premier revêtement $\Sigma^1$ de $\H^d_{ \breve{K}}$. Ce dernier a un groupe de Galois cyclique d'ordre $N=q^{d+1}-1$ qui est premier à $p$ et on le voit comme un $\mu_N$-torseur ou encore comme une classe du groupe $\het{1}( \H_{ \breve{K}}^d, \mu_N)$. Cette cohomologie est bien comprise grâce aux résultats d'annulation de \cite{J1} que nous rappelons dans \ref{sssectionrecaphdk}. Nous obtenons alors une classification explicite des $\mu_N$-torseurs et nous l'appliquerons dans \ref{sssectioneqsigma1} pour établir le résultat voulu.

\subsection{Quelques faits généraux sur les $\mu_n$-torseurs}

Nous décrivons dans cette section certaines classes de revêtement sur des espaces analytiques ainsi que leur interprétation cohomologique. Soit $X$ un espace analytique lisse et $N$ un entier premier à  $p$
, toute classe de $\het{1}(X, \mu_N)$ s'identifie   à un revêtement galoisien  $\pi: \TC \to X$  de groupe de Galois $\mu_N$. Le morphisme de Kummer  sera noté $\kappa : \Of^* (X)\to  \het{1}(X,\mu_N)$ et 
le torseur $\kappa(u)$ associé à  une section inversible $u$ de $X$ est défini par \[X(u^{1/N}):=\underline{\spec}_{\Of_X}  (\Of_X[T]/(T^N -u)).\]

Nous commençons par décrire le nombre de composantes connexes géométriques d'un revêtement de type Kummer.

\begin{prop}\label{proppio0} 
Soit $X=\spg(A)$ un $C$-affinoide lisse   connexe et 
$u\in A^*$. Soit $\pi_0$ le plus grand diviseur de $N$ pour lequel $u$ poss\`ede une racine 
$\pi_0$-i\`eme dans $A$. Alors $X(u^{1/N})$ poss\`ede $\pi_0$ composantes connexes. 
\end{prop}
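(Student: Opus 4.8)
The plan is to compute the ring of global sections of the Kummer cover and count its idempotents. Writing $B=A[T]/(T^N-u)$, the number of connected components of $X(u^{1/N})=\spg(B)$ is the number of factors in a decomposition of $B$ into connected (idempotent-free) rings. The first thing I would record is that, $X$ being smooth and connected, it is irreducible, so $A$ is a normal integral domain; let $F$ be its fraction field. Normality yields, for a \emph{unit} $v\in A^*$ and any $k\mid N$, the equivalence between $v$ being a $k$-th power in $A^*$ and in $F^*$: a $k$-th root $g\in F$ satisfies the monic equation $X^k-v$ over $A$, hence is integral over $A$ and lies in $A$ by normality, and is a unit since $g^k=v\in A^*$. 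Consequently $\pi_0$ is equally the largest divisor of $N$ for which $u$ is a $\pi_0$-th power in $F^*$.

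Next I would produce the decomposition. Fix $w\in A^*$ with $u=w^{\pi_0}$ and set $m=N/\pi_0$. Since $\mu_N\subset C\subset A$, we have $\mu_{\pi_0}(C)\subset A$, so
\[ T^N-u=(T^m)^{\pi_0}-w^{\pi_0}=\prod_{\zeta\in\mu_{\pi_0}(C)}\bigl(T^m-\zeta w\bigr). \]
These factors are pairwise comaximal, because for $\zeta\neq\zeta'$ the difference $(T^m-\zeta w)-(T^m-\zeta'w)=(\zeta'-\zeta)w$ is a unit of $A$. By the Chinese remainder theorem, $B\cong\prod_{\zeta\in\mu_{\pi_0}(C)}A[T]/(T^m-\zeta w)$, a product of $\pi_0$ rings. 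It then suffices to prove that each factor $A[T]/(T^m-\zeta w)$ is connected.

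This last point is the heart of the argument, and the step I expect to be the main obstacle. Each factor is finite étale over $A$ (since $\zeta w\in A^*$ and $m$ is prime to $p$), hence normal, so it is connected if and only if $T^m-\zeta w$ is irreducible over $F$. As $\mu_m\subset F$, the classical irreducibility criterion for binomials (the exceptional $-4$ condition being automatic here) shows that this irreducibility can fail only if $\zeta w$ is an $\ell$-th power in $F^*$ for some prime $\ell\mid m$. But if $\zeta w=g^\ell$ with $g\in F^*$, then choosing an $\ell$-th root $\eta\in C$ of $\zeta$ (possible since $C$ is algebraically closed) gives $w=(\eta^{-1}g)^\ell\in(F^*)^\ell$, whence $u=w^{\pi_0}\in(F^*)^{\ell\pi_0}$. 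Since $\ell\mid m$ we have $\ell\pi_0\mid N$ and $\ell\pi_0>\pi_0$, contradicting the maximality of $\pi_0$. Hence each of the $\pi_0$ factors is connected, and $X(u^{1/N})$ has exactly $\pi_0$ connected components. The delicate part is precisely this connectedness claim: passing to irreducibility over $F$ (which uses normality of $A$ and étaleness of the cover) and then converting the maximality of $\pi_0$ into the required irreducibility through the root-of-unity extraction.
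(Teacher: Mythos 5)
Your proof is correct and follows essentially the same route as the paper's: both arguments rest on $A$ being a normal domain containing $\mu_N(C)$ and reduce the count of components to the irreducibility of a binomial over $\mathrm{Frac}(A)$, with the maximality of $\pi_0$ supplying the required non-power condition via normality. The only cosmetic difference is that the paper first reduces to $\pi_0=1$ and then proves irreducibility of $T^N-u$ by hand (looking at the constant term of a monic irreducible factor), whereas you make the CRT splitting into the factors $T^m-\zeta w$ explicit and invoke the classical binomial irreducibility criterion for each of them.
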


\begin{proof} On se ram\`ene facilement au cas $\pi_0=1$, et l'on veut montrer que 
$A[T]/(T^N-u)$ est int\`egre dans ce cas. Puisque $T^N-u$ est unitaire, 
$A[T]/(T^N-u)$ s'injecte dans $M[T]/(T^N-u)$, o\`u $M$ est le corps des fractions de 
$A$ (notons que $A$ est int\`egre par hypoth\`ese). Il suffit donc de montrer que 
$M[T]/(T^N-u)$ est un corps, i.e. que $T^N-u$ est irr\'eductible dans $M[T]$. Soit 
$P\in M[T]$ un facteur irr\'eductible unitaire et $k$ son degr\'e. Soit $t$ une racine $n$-i\`eme de 
$u$ dans une cl\^oture alg\'ebrique de $M$. Alors $T^N-u=\prod_{\zeta\in \mu_N(C)} (T-t\zeta)$, donc $P$ est un produit de $k$ facteurs de la forme $T-\zeta t$. Puisque $P(0)\in M$, on obtient $t^k\in M$. Puisque $(t^k)^N=u^k\in A^*$ et $A$ est normal, on en d\'eduit que $t^k\in A^*$ et donc $t^{{\rm pgcd}(N,k)}\in A^*$. L'hypoth\`ese 
$\pi_0=1$ force alors ${\rm pgcd}(N,k)=N$ et donc $k=N$ et $P=T^N-u$.

\end{proof}

Nous nous intéressons maintenant à des $\mu_N$-torseurs plus généraux sur $X$ que nous supposons muni d'une action d'un groupe $G$. Un $\mu_N$-torseur $\TC \to X$ est $G$-invariant si pour tout $g$ dans $G$, on a un isomorphisme de $\mu_N$-torseurs $g^{-1}\TC \iso \TC$. Le groupe des torseurs $G$-invariants s'identifie \`a $\het{1}(X, \mu_N)^G$. Nous dirons que $\TC$ est $G$-\'equivariant si les isomorphismes $g^{-1}\TC \iso \TC$ induisent une action de $G$ sur $\TC$ qui commute avec le rev\^etement. On a deux notions d'\'equivalence sur les torseurs $G$-\'equivariants. $\TC$ et $\TC'$ sont faiblement \'equivalents s'il existe un isomorphisme de $\mu_N$-torseurs $\TC \iso \TC'$ et fortement \'equivalents si on peut de plus le supposer $G$-\'equivariant. On note ${\rm Tors}^{(G)}(X)$ l'ensemble des $\mu_N$-torseurs $G$-\'equivariants \`a \'equivalence forte pr\`es.  Le r\'esultat suivant explicite le lien entre ces notions.

\begin{prop}
\label{propgequiv}
On a une suite exacte 
\[ 0 \to \hgal{1}(G, \mu_N(X)) \to  {\rm Tors}^{(G)}(X) \xrightarrow{\beta} \het{1}(X, \mu_N)^G \xrightarrow{\gamma} \hgal{2}(G, \mu_N(X)).\]
\end{prop}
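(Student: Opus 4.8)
The statement is the low-degree exact sequence relating $G$-equivariant and plain $\mu_N$-torseurs, and the plan is to establish it by explicit cocycle computations, using the single crucial fact that the group of automorphisms of any $\mu_N$-torseur over $X$ (as a $\mu_N$-torseur) is canonically $\mu_N(X)$, an abelian group on which $G$ acts through its action on $X$. First I would construct the obstruction map $\gamma$. Fixing a $G$-invariant torseur $\TC$ representing a class $c\in\het{1}(X,\mu_N)^G$, invariance lets me choose for each $g\in G$ an isomorphism $\phi_g : g^{-1}\TC \iso \TC$ of $\mu_N$-torseurs. Setting
\[ c_{g,h} := \phi_{gh}^{-1}\circ \phi_g \circ (g^{-1}\phi_h) \in \aut_{\mu_N}(\TC) = \mu_N(X), \]
where $g^{-1}\phi_h$ is the image of $\phi_h$ under the pullback functor $g^{-1}(-)$, one checks that $(c_{g,h})$ is a normalized $2$-cocycle and that replacing the $\phi_g$ by $\phi_g\cdot a_g$ with $a_g\in\mu_N(X)$ alters it by the coboundary of $(a_g)$. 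This yields a well-defined class $\gamma(c)=[c_{g,h}]\in\hgal{2}(G,\mu_N(X))$, independent of all choices.

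Next I would prove exactness at the middle term. The map $\beta$ forgets the equivariant structure; since a $G$-equivariant torseur is in particular $G$-invariant, $\beta$ does land in the invariants, and for such a torseur the $\phi_g$ may be taken to satisfy the compatibility $\phi_{gh}=\phi_g\circ(g^{-1}\phi_h)$, so that $c_{g,h}=1$ and $\gamma\circ\beta=0$. Conversely, if $\gamma(c)=0$ I would write $(c_{g,h})$ as the coboundary of some $1$-cochain and correct the $\phi_g$ accordingly; the corrected isomorphisms then satisfy the cocycle condition, which is exactly the datum of a $G$-action on $\TC$ commuting with the covering. Hence $c$ lifts to ${\rm Tors}^{(G)}(X)$, giving $\ker\gamma=\operatorname{im}\beta$.

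For exactness at the left I would analyse $\ker\beta$, namely the $G$-equivariant structures on the trivial torseur $X\times\mu_N$ modulo strong equivalence. Such a structure is a family of automorphisms $\psi_g$, i.e.\ a map $g\mapsto a_g\in\mu_N(X)$, and the cocycle condition becomes $a_{gh}=a_g\cdot g(a_h)$, so $(a_g)$ is a $1$-cocycle; two structures are strongly equivalent precisely when their cocycles differ by a $\mu_N(X)$-valued coboundary. This identifies $\ker\beta$ with $\hgal{1}(G,\mu_N(X))$ and simultaneously furnishes the injectivity of the leftmost map (together with the fact that all the sets carry abelian group structures, via tensor product of torseurs, for which $\beta,\gamma$ are homomorphisms).

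The individual verifications are routine; the delicate point is the bookkeeping. One must pin down the variance convention hidden in the symbol $g^{-1}\TC$ (left versus right action), check that the identification $\aut_{\mu_N}(\TC)\cong\mu_N(X)$ is compatible with the $G$-action so that the identities obtained are the genuine \emph{group}-cohomology cocycle relations, and keep the weak and strong equivalence relations carefully distinct so that only strong equivalence is divided out in ${\rm Tors}^{(G)}(X)$. Conceptually the whole statement is the five-term exact sequence of the descent (homotopy fixed point) spectral sequence for the $G$-action on the Picard groupoid of $\mu_N$-torseurs over $X$, whose $\pi_0$ is $\het{1}(X,\mu_N)$ and whose $\pi_1$ is $\mu_N(X)$; the cocycle argument above is simply this sequence made explicit.
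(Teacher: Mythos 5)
Votre preuve est correcte et suit essentiellement la même démarche que celle de l'article : le cocycle d'ordre $2$ mesurant l'obstruction à rendre les isomorphismes $\rho_g$ compatibles (exactitude au terme médian), puis l'identification du noyau de $\beta$ avec les actions de $G$ sur le torseur trivial, données par un $1$-cocycle $g\mapsto\lambda_g$ à valeurs dans $\mu_N(X)$, à cobord près. La remarque finale sur la suite spectrale de descente est un éclairage conceptuel supplémentaire mais ne change pas l'argument.
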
 

\begin{proof}
D\'ecrivons d'abord les fl\`eches $\beta$ et $\gamma$. Le morphisme $\beta$ est l'oubli de l'action de $G$. Pour $\gamma$, prenons $\pi : \TC \to X$ un torseur $G$-invariant et des isomorphismes $\rho_g : g^{-1}\TC \to \TC$. On consid\`ere 
\[ (g,h) \in G^2 \mapsto \rho_{gh} \circ (\rho_h \circ h^{-1} \rho_g)^{-1}\in\aut_X(\TC)=\mu_N(X).\] 
C'est un cocycle d'ordre 2 qui est un cobord si et seulement si les isomorphismes $\rho_g$ peuvent \^etre modifi\'es pour induire une action de $G$ sur $\TC$. Cela prouve l'exactitude en $ \het{1}(X, \mu_N)^G$. 

Pour terminer la preuve, il suffit de d\'ecrire le noyau de $\beta$. Ce groupe classifie les actions de $G$ qui commutent \`a $\mu_N$ sur le torseur trivial $\pi : X(1^{1/N}) \to X$. Prenons $t \in \Of^*( X(1^{1/N}))$ tel que $t^N=1$, on a la d\'ecomposition en parties isotypiques $\pi_*\Of_{X(1^{1/N})}= \bigoplus t^i \Of_X$. Une action de $G$ sera $\Of_X$-lin\'eaire et sera donc d\'etermin\'ee par sa restriction sur $t$. Si de plus cette derni\`ere commute avec celle de $\mu_N$, on pourra trouver des fonctions inversibles $\lambda_g$ pour tout $g$ telles que $g.t= \lambda_g t $. Mais on a $(g.t)^N=1$ donc $g \mapsto \lambda_g$ d\'efinit un cocycle de degr\'e $1$ de $\mu_N(X)$ qui est un cobord si et seulement si l'action est triviale. Ceci prouve l'exactitude \`a gauche.  
\end{proof}

\subsection{Récapitulatif des résultats obtenus pour $\H^d_K$\label{sssectionrecaphdk}}

Nous souhaitons ici rappeler les résultats de \cite{J1} et les réécrire de façon à pouvoir les utiliser dans la partie suivante. 

On a introduit deux recouvrements admissibles croissants par des ouverts admissibles 
$\H^d_K =\uni{\mathring{U}_n}{n> 0}{}=\uni{\bar{U}_n}{n \ge 0}{}$. On a aussi noté $\bar{U}_{n,L}=\bar{U}_{n}\hat{\otimes}_K L$, $\mathring{U}_{n,L}=\mathring{U}_{n} \hat{\otimes}_K L$ et $\H^d_L=\H^d_K\hat{\otimes}_K L$. Chacun de ces ouverts est stable sous l'action de $\gln_{d+1}(\OC_K)$.  


\begin{notation}

On pose $\Z\left\llbracket \HC\right\rrbracket^0=\limp_n \Z[\HC_n]^0$ avec $\Z[\HC_n]$  le $\Z$-module libre engendr\'e par $\HC_n$ et $\Z[\HC_n]^0$ le sous-module des fonctions de masse totale nulle\footnote{ie. les éléments $\sum_{h\in \HC_n}a_h\delta_h$ tels que $\sum_{h\in \HC_n}a_h=0$.}. Pour $l$ un entier premier à $p$ (non nécessairement premier), on définit de même les modules $\Z/l\Z\left\llbracket \HC\right\rrbracket^0$ et $\Z/l\Z[\HC_n]^0$.

\end{notation} Notons que $\Z\left\llbracket \HC\right\rrbracket^0$ et $\Z/l\Z\left\llbracket \HC\right\rrbracket^0$ admettent une action de $G$ (induite par celle   sur $\HC$) et qu'ils sont isomorphes aux duaux algébrique ${\rm St}_1 (\Z)^\vee$ et ${\rm St}_1 (\Z/l\Z)^\vee$ des représentations Steinberg associées. 

\begin{rem}\label{remfracrat}

Le choix d'un système de représentants $S_n\subset \OC_K^{d+1}$ unimodulaire de $\HC_n$ permet d'identifier le module $\Z[\HC_n]^0$ aux sous-groupes \[ \left\langle \frac{l_a}{l_b} :a\neq b\in S_n\right\rangle_{\Z\modut}\]
de $\Of^*(\H^d_{ L})$. Sous ce choix, nous pourrons voir les éléments de $\Z\left\llbracket \HC\right\rrbracket^0$ comme des suites de produit homogène de degré $0$ de formes linéaires que nous appellerons fractions rationnelles compatibles. 

De même, pour $l$ un entier premier à $p$, on peut écrire  les éléments de $\Z/l\Z[\HC_n]^0$ sous la forme \[\prod_{a\in S_n}l_a^{\beta_a}\ \ \ \text{avec } \beta_a\in \Z/l\Z \et \sum \beta_a = 0\]
que nous appellerons par abus fraction rationnelle réduite. On peut les interpréter comme des éléments de $\Of^*(\H^d_{ L})/\Of^*(\H^d_{ L})^l\subset \het{1}(\H^d_{ L},\mu_l)$ via la flèche de Kummer. De même, nous verrons les éléments de $\Z/l\Z\left\llbracket \HC\right\rrbracket^0$ comme des suites de fractions rationnelles réduites compatibles.  
\end{rem}

 Nous avons montré :
\begin{theo}[\cite{J1} théorèmes 6.8, 6.11, 7.1, 8.1 et proposition 8.2, \cite{J2} théorème 2.1]\label{theocohoanhdk}
Pour toute extension complète $L$ de $K$.
\begin{enumerate}
\item L'espace symétrique de Drinfeld $\H^d_{ L}$ et les affinoïdes $\bar{U}_{n,L}$ sont $\G_m$-acycliques en topologie analytique. En particulier,  on a $\pic (\H^d_{ L})=\pic (\bar{U}_{n,L})=0$.  
\item Les identifications dans \ref{remfracrat} induisent  des  suites exactes  $\gln_{d+1}(\OC_K)$-\'equivariantes \[ 0\to L^*\Of^{**} (\bar{U}_{n,L})\to \Of^* (\bar{U}_{n,L})\to \Z[\HC_{n+1}]^0\to 0,\]\[0\to L^*\to\Of^*(\H^d_{ L})\to\Z\left\llbracket \HC\right\rrbracket^0 \to 0.\] 
\item Soit $l$ premier à $p$, On dispose d'un isomorphisme et deux suites exactes
\[\het{1}(\bar{U}_{n,L}, \mu_l)\cong\het{1}(\mathring{U}_{n+1,L},\mu_l),\]
\[0\to L^*/(L^*)^l\fln{\kappa}{}\het{1}(\H^d_{ L},\mu_l)\to \Z/l\Z\left\llbracket \HC\right\rrbracket^0\to 0, \] \[ 0\to L^*/(L^*)^l\fln{\kappa}{}\het{1}(\bar{U}_{n,L}, \mu_l)\to\Z/l\Z[\HC_{n+1}]^0\to 0.\]

\item Les surjections précédentes s'inscrivent dans des diagrammes   commutatifs :
\[ \xymatrix{
\Of^*(\bar{U}_{n,L}) \ar[r]^-{\kappa} \ar[d] & \het{1}(\bar{U}_{n,L}, \mu_l)\ar[d] \\
 \Z[\HC_{n+1}]^0 \ar[r]_-{ } &  \Z/l\Z[\HC_{n+1}]^0}  \;\;\;\;\xymatrix{
\Of^*( \H^d_{K,L}) \ar[d]\ar[r]^-{\kappa} & \het{1}(\H^d_{ L}, \mu_l)\ar[d] \\
 \Z\left\llbracket \HC\right\rrbracket^0  \ar[r]_-{ } & \Z/l\Z\left\llbracket \HC\right\rrbracket^0 }.\]
\end{enumerate}
\end{theo}

\begin{proof}
Le point 2. a été montré dans \cite[Théorèmes 6.11, 7.1]{J1} et en partie dans \cite[Théorème 2.1]{J2}. Pour le premier point, nous renvoyons à \cite[Théorèmes 6.8,  7.1]{J1}.

Les points 3. et 4. découlent directement des points 1. et 2. par suite exacte de Kummer et théorème de comparaison \cite[Proposition 8.2]{J1}.
\end{proof}

\begin{rem}\label{remscin}

Modulo le choix d'une famille de systèmes de représentants convenables $(S_n)_n$ de $(\HC_n)_n$, on peut, d'après la remarque \ref{remfracrat} et \cite[Remarque 2.8]{J2}, scinder les suites exactes du point 2. de \ref{theocohoanhdk}. Même si ce scindage dépend des choix de représentants, il permet de donner une représentation explicite aux fonctions inversibles. Par contre, l'action de $G$ ne respecte pas la décomposition en produit obtenu car elle permute les systèmes de représentants "convenables". 

\end{rem}

Au vu de la remarque précédente, on fixe un jeu de systèmes de représentants et nous allons décrire plus concrètement les isomorphismes des points 2. et 3. de \ref{theocohoanhdk} grâce à ces scindages. En premier lieu, toute fonction inversible $u$ sur l'espace symétrique est équivalente à la donnée d'une constante $\lambda$ et d'une famille de fractions rationnelles compatibles $(u_n)_n$ qui sont  caractérisées par la congruence (on a confondu les éléments $u_n \in \Z[\HC_n]^0$ avec les fonctions inversibles associées dans $\Of^*(\H^d_{ L})$ qui découlent des  constructions de \ref{remfracrat})  \[u\equiv \lambda u_n\pmod{\Of^{**}(\bar{U}_{n-1,L})}.\] De plus, on peut exprimer $u$ en fonction de ces deux données sous la forme $u=\lim_n \lambda u_n$ (contrairement aux éléments $\lambda u_n$, la limite $u$ ne dépends du scindage).

 De même, tout $\mu_l$-torseur $\TC\to \H_{ L}^d$ est caractérisé par la donnée d'une constante $\lambda\in L^*/(L^*)^l$ et d'une famille de fractions rationnelles réduites compatibles  $(v_n)_n$. Ainsi, tout torseur $\TC\to \H_{ L}^d$ provient d'une fonction inversible $\tilde{v}=(\lambda \tilde{v}_n)$ via la flèche de Kummer et \[ \TC|_{\bar{U}_{n-1,L}}=\bar{U}_{n-1,L}((\lambda_n \tilde{v}_n)^{1/l})\] avec $ \tilde{v}_n$ un relevé de  $v_n$ dans  $\Of^*(\H^d_{ L})$ (cf \ref{remfracrat}). Enfin, tout $\mu_l$-torseur sur $\bar{U}_{n-1,L}$ se prolonge de manière unique sur $\mathring{U}_{n,L}$ et admet même un prolongement global (qui n'est pas unique cette fois-ci).

\subsection{Equation globale pour le torseur $\Sigma_1$\label{sssectioneqsigma1}}

En guise d'application, nous allons donner une description explicite du premier rev\^etement de l'espace sym\'etrique de Drinfeld $\Sigma^1$. 
On pose $N=q^{d+1}-1=(q-1)\tilde{N}$, le revêtement $\Sigma^1$ a groupe de galois $\mu_N$. D'après ce qui précède, \[\Sigma^1=\H^d_{ \breve{K}}((\varpi^k u)^{1/N})\] avec $\varpi^k u=(\varpi^k u_n)_n=(\varpi^k \prod_{H\in S_n}l_H^{\alpha_H})_n$  un \'el\'ement de \[\het{1}( \H_{ \breve{K}}^d, \mu_N)=\breve{K}^*/(\breve{K}^*)^N\times \Z/N\Z\left\llbracket \HC\right\rrbracket^0=\varpi^{\Z/N\Z}\times \Z/N\Z\left\llbracket \HC\right\rrbracket^0.\] 

\begin{rem}

Notons que l'action du groupe $\gln_{d+1} (\OC_K)$ respecte la décomposition  en produit ci-dessus de $\het{1}( \H_{ \breve{K}}^d, \mu_N)$. Plus précisément, le groupe agit trivialement sur le premier facteur $\varpi^{\Z/N\Z}$ et de manière naturelle sur le second facteur $\Z\left\llbracket \HC\right\rrbracket^0$ (ie. via l'action sur $\HC$). 

Justifions que l'action sur $\varpi^{\Z/N\Z}$ est la bonne. Prenons  $v=(\varpi^k v_n)_n\in \het{1}( \H_{ \breve{K}}^d, \mu_N)$, ainsi  que $g\in \gln_{d+1} (\OC_K)$ et décomposons \[g\cdot v =(\varpi^{k_g} g\cdot v_n)_n.\] N'importe quel relevé $\tilde{v}\in \Of^*(\H^d_{ L})$ de $v$ vérifie $\tilde{v}\in \varpi^k \tilde{v}_1\Of^{**}(\H^d_{ L,s})$ sur le sommet standard $s$ (par abus, on voit $k$ dans $\Z$) et on a $\left\Vert  v\right\Vert_{\H^d_{ L,s}}=|\varpi^k |$ d'après les équations définissant le tube au-dessus du sommet $\H^d_{ L,s}$ (cf \ref{paragraphhdksimpfer}). De plus, sur ce sommet, l'élément agit $g$ par isométrie et en en déduit \[\left\Vert v\right\Vert_{\H^d_{ L,s}}=\left\Vert g\cdot v\right\Vert_{\H^d_{ L,s}}=|\varpi^{k_g}|\] d'où $k=k_g$.

\end{rem}

Nous allons d\'eterminer les quantit\'es $k,\alpha_H\in \Z/N\Z$ pour décrire explicitement $\Sigma^1$. L'observation clé est de voir que le revêtement $\pi : \Sigma^1\to\H^d_{ \breve{K}}$ est $\gln_{d+1}(\OC_K)$-\'equivariant. Le résultat suivant montre qu'il n'y en a essentiellement qu'un seul.

\begin{lem}\label{leminth1}

Si on pose\footnote{Comme on a $|\HC_n|q^{n-1}(q-1)\in N\Z$, on  vérifie $(u_n^{(q-1)})_n\in\Z/N\Z\left\llbracket \HC\right\rrbracket^0$. On peut aussi les écrire sous la forme $u_n^{(q-1)}=\prod_{H\in S_n\backslash \{H_0\}} (\frac{l_H}{l_{H_0}})^{q^{n-1}(q-1)}$.} $  u=(u_n)_n :=(  \prod_{H\in S_n} l_H^{q^{n-1}})_n\in   \Z/\tilde{N}\Z\left\llbracket \HC\right\rrbracket^0$ alors  \[\het{1}(\H^d_{ \breve{K} }, \mu_N)^{\gln_{d+1}(\OC_K)}=\varpi^{\Z/N\Z}\times \langle \kappa(u^{(q-1)}) \rangle_{\Z/N\Z\modut}\] et un torseur de $\het{1}(\H^d_{ \breve{K} }, \mu_N)^{\gln_{d+1}(\OC_K)}$ est déterminé par sa restriction à $\H^d_{ \breve{K},s}$. 

\end{lem}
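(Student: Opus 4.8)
The plan is to take $\Gamma$-invariants (write $\Gamma:=\gln_{d+1}(\OC_K)$) in the $\Gamma$-equivariant Kummer sequence of \ref{theocohoanhdk}(3), applied with $l=N$ and $L=\breve{K}$,
\[0 \to \breve{K}^*/(\breve{K}^*)^N \xrightarrow{\kappa} \het{1}(\H^d_{\breve{K}}, \mu_N) \to \Z/N\Z\left\llbracket\HC\right\rrbracket^0 \to 0,\]
and to identify each term. First I would note that $\breve{K}^*/(\breve{K}^*)^N=\varpi^{\Z/N\Z}$: since the residue field is algebraically closed and $N$ is prime to $p$, every unit of $\OC_{\breve{K}}$ is an $N$-th power. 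As recalled in the remark preceding the statement, $\Gamma$ fixes this factor; since taking invariants is left exact we obtain the exact sequence
\[0 \to \varpi^{\Z/N\Z} \to \het{1}(\H^d_{\breve{K}}, \mu_N)^{\Gamma} \to \left(\Z/N\Z\left\llbracket\HC\right\rrbracket^0\right)^{\Gamma}.\]

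The heart of the argument is the computation of the invariants of the Steinberg module. As invariants commute with the inverse limit $\Z/N\Z\left\llbracket\HC\right\rrbracket^0=\limp_n \Z/N\Z[\HC_n]^0$, I would work at each finite level. The action of $\Gamma$ on $\HC_n\cong\P^d(\OC_K/\varpi^n)$ being transitive, the $\Gamma$-invariants of $\Z/N\Z[\HC_n]$ are the constant masses $c\,\mathbf{1}$, with $\mathbf{1}=\sum_H \delta_H$; intersecting with the mass-zero submodule imposes $c\,|\HC_n|\equiv 0 \pmod N$. Using $|\HC_n|=q^{(n-1)d}\tilde{N}$, $N=(q-1)\tilde{N}$ and $q\equiv 1 \pmod{q-1}$, this reduces to $c\equiv 0\pmod{q-1}$, so $(\Z/N\Z[\HC_n]^0)^{\Gamma}$ is cyclic of order $\tilde{N}$ generated by $(q-1)\mathbf{1}$. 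The level-$n$ component of $u^{(q-1)}$ is $q^{n-1}(q-1)\mathbf{1}$, and $q$ is invertible modulo $\tilde{N}$ (since $\tilde{N}\equiv 1\pmod q$), hence is again a generator. Passing to the limit, I conclude $\left(\Z/N\Z\left\llbracket\HC\right\rrbracket^0\right)^{\Gamma}\cong\Z/\tilde{N}\Z$ and that the image of $\kappa(u^{(q-1)})$ generates it.

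It then remains to assemble the direct product. One must first check that $\kappa(u^{(q-1)})$ is genuinely $\Gamma$-invariant in $\het{1}$, not merely in the Steinberg quotient: writing $g\cdot l_H=\mu_{g,H}\,l_{g^{-1}H}$ with $\mu_{g,H}\in\OC_{\breve{K}}^*$, one gets $g\cdot u_n=c_{g,n}u_n$ with $c_{g,n}$ a unit, hence an $N$-th power, so $u$ and $u^{q-1}$ are fixed modulo $N$-th powers and $\kappa(u^{q-1})$ is invariant. By the previous paragraph the displayed invariant sequence is therefore short exact, its last map being surjective because $\kappa(u^{(q-1)})$ hits a generator. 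Finally $\tilde{N}\,\kappa(u^{(q-1)})=\kappa(u^N)=0$, so $\kappa(u^{(q-1)})$ has order exactly $\tilde{N}$ and $\langle\kappa(u^{(q-1)})\rangle\cong\Z/\tilde{N}\Z$ maps isomorphically onto the quotient; its intersection with $\varpi^{\Z/N\Z}$ is thus trivial, which yields the claimed internal direct product decomposition.

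For the last assertion I would prove injectivity of the restriction $\het{1}(\H^d_{\breve{K}}, \mu_N)^{\Gamma}\to\het{1}(\H^d_{\breve{K},s}, \mu_N)$. Using $\pic(\H^d_{\breve{K},s})=0$ (established in the proof of \ref{coropic}), the target is $\Of^*(\H^d_{\breve{K},s})/(\Of^*(\H^d_{\breve{K},s}))^N$, on which $\varpi^{\Z/N\Z}$ still injects and the image of $u^{(q-1)}$ is $\prod_{H\in\HC_1}l_H^{q-1}=(q-1)\mathbf{1}$, again of order $\tilde{N}$. A class $\kappa(\varpi^a u^{b(q-1)})$ killed by restriction forces $b\equiv 0\pmod{\tilde{N}}$ from the Steinberg part over $s$, and then $a\equiv 0\pmod N$ from the constant part, so the restriction is injective on invariants. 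The step I expect to be the main obstacle is the explicit determination of $\left(\Z/N\Z\left\llbracket\HC\right\rrbracket^0\right)^{\Gamma}$ together with the verification that $\kappa(u^{(q-1)})$ lands in it, is truly invariant at the cohomological level, and has the exact order $\tilde{N}$ needed to split off the constant factor.
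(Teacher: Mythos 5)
Votre preuve est correcte et suit essentiellement la même démarche que celle de l'article : on identifie les invariants du facteur $\Z/N\Z\left\llbracket \HC\right\rrbracket^0$ par transitivité de l'action sur chaque $\HC_n$, la condition de masse nulle ($|\HC_n|\alpha_n\equiv 0$) et la compatibilité ($\alpha_n=q^d\alpha_{n+1}$) donnant $\alpha_n=q^{n-1}(q-1)\tilde{\alpha}_1$, puis on conclut par la restriction à $\bar{U}_{0,\breve{K}}=\H^d_{\breve{K},s}$. Votre vérification explicite de l'invariance de $\kappa(u^{(q-1)})$ dans $\het{1}$ et du scindage de la suite des invariants remplace simplement l'appel à la remarque précédant le lemme sur l'équivariance de la décomposition en produit ; le contenu est le même.
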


\begin{proof}
Soit $\varpi^k v=(\varpi^k v_n)_n=(\varpi^k\prod_{H \in \HC_n} l_H^{\alpha_H})_n$ un torseur de $\het{1}(\H^d_{ \breve{K} }, \mu_N)^{\gln_{d+1}(\OC_K)}$. 
L'action de $\gln_{d+1} (\OC_K)$ sur $\het{1} (\H^d_{ \breve{K}}, \mu_N)$ est induite par l'action naturelle sur chaque $\HC_n$ qui est transitive. Ainsi, on a $\alpha_H= \alpha_{H'}=: \alpha_n$ pour tout $H$, $H'$ dans $\HC_n$. Les fractions rationnelles réduites $v_n$ doivent former une famille compatible et être homog\`enes de degr\'e $0$, on a donc les relations $\alpha_{n}= \frac{|\HC_{n+1}|}{|\HC_n|}\alpha_{n+1}$ et $| \HC_n |\alpha_n \equiv 0 \pmod N$.

Calculons la quantité $|\HC_{n}|$. On a une identification \[\HC_n=\{\text{vecteur unimodulaire de } K^{d+1} \text{ mod } \varpi^n\}/(\OC_K/\varpi^n\OC_K)^*. \]
Le premier est de cardinal \[|(\OC_K/\varpi^n\OC_K)^{d+1}\backslash (\varpi\OC_K/\varpi^n\OC_K)^{d+1}|=q^{n(d+1)}-q^{(n-1)(d+1)}=Nq^{(n-1)(d+1)}\] et le second $(q-1)q^{n-1}$ d'où 
$|\HC_n|=\tilde{N}q^{(n-1)d}$.
On en d\'eduit alors
\[ \alpha_n = q^d \alpha_{n+1} \et \alpha_n \in (q-1) \Z/ N \Z = \mathrm{Ann}(| \HC_n | )=\mathrm{Ann}(\tilde{N}).\]
Ainsi, pour tout $n$, on a $\alpha_n= q^{n-1} (q-1) \tilde{\alpha}_1$ ie. $v=u^{(q-1)\tilde{\alpha}_1}$ avec $u$ le torseur introduit dans l'énoncé. La donné de $\tilde{\alpha}_1$ et de $k$ est équivalente à $\varpi^k v_1$ et donc à la restriction de $\varpi^k v$ sur $\bar{U}_{0,\breve{K}}=\H^d_{ \breve{K},s}$. D'après la discussion précédente, $\tilde{\alpha}_1$ et  $k$ déterminent aussi $\varpi^k v$ et donc le torseur $\TC$ ce qui termine la preuve.
\end{proof}

Par \ref{theoniventiersigmasommet}, on sait de plus  : 
 \begin{equation}\label{eqsigma1}
\Sigma^1|_{\bar{U}_{0,\breve{K}}}= \Sigma^1_{s}=\H^d_{ \breve{K},s}((\varpi \prod_{H \in \HC_1} (l_H)^{q-1})^{1/N})=\H^d_{ \breve{K},s}((\lambda u_1^{(q-1)})^{1/N})
 \end{equation}
ce qui entraîne par la discussion précédente :

\begin{theo}\label{theoeqsigma1} On reprend l'élément $u=( u_n)_n\in   \Z/N\Z\left\llbracket \HC\right\rrbracket^0$ définie dans \ref{leminth1}. On a 
\[  \Sigma^1=\H^d_{ \breve{K}}((\varpi u^{(q-1)})^{1/N}). \]
\end{theo}

\begin{rem}

La preuve montre plus précisément :
\[ \Sigma^{1}|_{\mathring{U}_{n,\breve{K}}} = \mathring{U}_{n,\breve{K}}((\varpi u_{n }^{(q-1)})^{1/N})\et\Sigma^{1}|_{\bar{U}_{n,\breve{K}}} = \bar{U}_{n,\breve{K}}((\varpi u^{(q-1)}_{n+1})^{1/N}). \]

\end{rem}

On retrouve aisément le résultat bien connu par  \ref{proppio0}
\begin{coro}\label{coropi0}

Les revêtements $\Sigma^1$ et $\Sigma^1_{|\bar{U}_{n,\breve{K}}}$ ont $q-1$ composantes connexes géométriques.

\end{coro}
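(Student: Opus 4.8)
The plan is to deduce the statement directly from Proposition \ref{proppio0}, which computes the number of geometric connected components of a Kummer covering $X(v^{1/N})$ as the largest divisor $\pi_0 \mid N$ for which $v$ admits a $\pi_0$-th root in $\Of^*(X)$. Since we are counting \emph{geometric} components, I would first base change to $C$: because $C$ is algebraically closed and complete, every nonzero constant — in particular $\varpi$ — is an $N$-th power, so $C^*/(C^*)^N = 0$ and the covering function $\varpi u^{(q-1)}$ of Theorem \ref{theoeqsigma1} becomes $u^{(q-1)}$ modulo $N$-th powers. Thus $\Sigma^1_C = \H^d_C\bigl((u^{(q-1)})^{1/N}\bigr)$, and likewise, using the refined statement of the remark following Theorem \ref{theoeqsigma1}, $\Sigma^1_{|\bar{U}_{n,C}} = \bar{U}_{n,C}\bigl((u_{n+1}^{(q-1)})^{1/N}\bigr)$. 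I would record here that $\H^d_C$ and each $\bar{U}_{n,C}$ are connected, so that Proposition \ref{proppio0} applies.

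Next I would translate $\pi_0$ into an order computation in cohomology. Over $C$ the Kummer sequence together with the vanishing $\pic(\H^d_C) = \pic(\bar{U}_{n,C}) = 0$ of point 1 of Theorem \ref{theocohoanhdk} identifies $\Of^*/(\Of^*)^N$ with $\het{1}(-,\mu_N)$; combined with point 3 and $C^*/(C^*)^N = 0$, this yields isomorphisms $\het{1}(\H^d_C, \mu_N) \cong \Z/N\Z\llbracket \HC\rrbracket^0$ and $\het{1}(\bar{U}_{n,C}, \mu_N) \cong \Z/N\Z[\HC_{n+1}]^0$ under which $\kappa(u^{(q-1)})$ corresponds to the element $u^{(q-1)}$ (via the compatibility diagrams of point 4). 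Since all roots of unity are available over $C$, the largest $\pi_0$ with $u^{(q-1)} \in (\Of^*)^{\pi_0}$ equals $N/\mathrm{ord}\bigl(\kappa(u^{(q-1)})\bigr)$, so the number of components is $N$ divided by the order of $u^{(q-1)}$ in the Steinberg-type module $\Z/N\Z\llbracket \HC\rrbracket^0$.

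Finally I would compute this order level by level. At the relevant finite level the element $\prod_{H\in S_m} l_H^{(q-1)q^{m-1}}$ has constant exponent $(q-1)q^{m-1}$, so in the free $\Z/N\Z$-module $\Z/N\Z[\HC_m]^0$ its order is $N/\gcd\bigl(N,(q-1)q^{m-1}\bigr)$. Here $N = q^{d+1}-1$ is prime to $q$, whence $\gcd(N,q^{m-1}) = 1$, and $(q-1)\mid N$, so $\gcd\bigl(N,(q-1)q^{m-1}\bigr) = q-1$ and the order is $N/(q-1) = \tilde{N}$ at every level; passing to the inverse limit, the order of $u^{(q-1)}$ is again $\tilde{N}$. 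Hence $\pi_0 = N/\tilde{N} = q-1$ for both $\H^d_C$ and $\bar{U}_{n,C}$, which is the claim.

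The computation itself is routine; the only genuine point requiring care is the passage to the geometric situation, namely checking that over $C$ the constant $\varpi$ drops out of the covering function and that the identification of $\het{1}$ with $\Z/N\Z\llbracket \HC\rrbracket^0$ is compatible with taking orders in the inverse limit (so that the order read off at each finite level indeed computes $\pi_0$). I expect this bookkeeping — rather than any deep argument — to be the main, and rather minor, obstacle.
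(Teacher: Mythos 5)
Votre démonstration est correcte et suit exactement la voie que l'article a en vue : le texte se contente d'invoquer la Proposition \ref{proppio0} appliquée à l'équation du Théorème \ref{theoeqsigma1}, et vous explicitez précisément les détails sous-entendus (disparition de la constante $\varpi$ sur $C$, identification de $\het{1}$ avec $\Z/N\Z\left\llbracket \HC\right\rrbracket^0$ via \ref{theocohoanhdk}, et calcul de l'ordre $\tilde{N}$ de $u^{(q-1)}$ niveau par niveau, d'où $\pi_0=N/\tilde{N}=q-1$). Le calcul de divisibilité est juste, et les points de vigilance que vous signalez (passage à la limite projective, application de \ref{proppio0} à l'espace non affinoïde $\H^d_C$) sont effectivement les seuls détails de routine à régler.
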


\begin{rem}\label{remequiv}

Les constantes $\kappa(\lambda)$ et le torseur $\kappa(u^{(q-1)}) $ engendrent $\het{1}(\H^d_{ \breve{K} }, \mu_N)^{\gln_{d+1}(\OC_K)}$ et sont faiblement équivariant, la suite exacte écrite dans \ref{propgequiv} devient :
\[0\to\F^*\to{\rm Tors}^{\gln_{d+1}(\OC_K)}(\H^d_{ \breve{K} })\to \het{1}(\H^d_{ \breve{K} }, \mu_N)^{\gln_{d+1}(\OC_K)}\to 0.\]

\end{rem}

\section{Calcul des fonctions inversibles du revêtement modéré\label{sssectionost}}

Comme précédemment, on note $N=q^{d+1}-1=(q-1)\tilde{N}$. Nous souhaitons calculer les fonctions inversibles de $\Sigma_L^1$. On suppose que $L/ \breve{K}$ est galoisienne et qu'elle contient une racine $N$-i\`eme de $\varpi$. Les autres cas se d\'eduiront en passant aux invariants sous le groupe de Galois. Donnons-nous une unité $u\in  \Of^*(\H^d_{ \breve{K}})$ tel que $\Sigma^{1}=\H^d_{ \breve{K}}((\varpi u^{q-1})^{1/N})$ (cf \ref{theoeqsigma1}). Ainsi, $\Sigma^1_L=\H_{ L}^d((u^{q-1})^{1/N})$ par hypothèse sur $L$.

On fixe $t$ dans $\Of^*(\Sigma^1_L)$ une racine $N$-i\`eme de $u^{q-1}$ et on pose $t_0=\frac{t^{\tilde{N}}}{u}$. Pour tout $a\in \mu_{q-1}(K)$, on introduit le polynôme interpolateur de Lagrange $\Lf_a (X)$ ie. le polynôme unitaire de degré $q-2$ qui vaut $1$ en $a$ et $0$ sur $\mu_{q-1}(K)\backslash\{a\}$. Les composantes de $\Sigma^1_L$ sont alors en bijection avec la famille d'idempotents $\{ \Lf_a (t_0) \}_{a\in \mu_{q-1}(K)}$.

Enfin, si $Y\to X$ est un revêtement de groupe de galois cyclique, nous introduisons la norme galoisienne ${\rm Nrm}: \Of(Y) \to \Of(X)$ avec ${\rm Nrm}(f)= \prod_g g \cdot f$ o\`u $g$ parcourt les automorphismes de $Y/X$.

\begin{theo}\label{theogmsigma1}
Reprenons les notations pr\'ec\'edentes, toute fonction inversible $v$ de $\Sigma^1_L$ admet une unique \'ecriture sous la forme \[v= \sum_{a\in \mu_{q-1}(K)} t^{j_a}v_a \Lf_a (t_0)\] o\`u $(v_a)_a$ sont des fonctions inversibles de $\H^d_{ L}$ et $0\le j_a\le  \tilde{N}-1$.  

De même, si $T \subset \BC\TC$ est un sous-complexe simplicial connexe, toute fonction inversible $v$ de $\Sigma^1_{L,T}$ admet une  \'ecriture sous la forme \[v= \sum_{a\in \mu_{q-1}(K)} t^{j_a}v_a (1+f_a) \Lf_a (t_0)\] o\`u $f_a$ est topologiquement nilpotente sur $\Sigma^1_{L,T}$,  $(v_a)_a$ sont des sections inversibles de $\H^d_{ L}$ et $0\le j_a\le  \tilde{N}-1$.  
\end{theo}

\begin{rem}
Dit autrement, sur chaque composante connexe géométrique  du premier rev\^etement, les fonctions inversibles du premier rev\^etement sont engendr\'ees par $\Of^*(\H^d_{ L})$ et par $t$.
\end{rem}

Comme on peut raisonner sur chaque composante connexe, nous nous concentrerons plut\^ot sur le rev\^etement $\tilde{\Sigma}^1_L=\H^d_{ L}(u^{1/ \tilde{N}})$. Par abus, nous \'ecrirons encore $t$ une racine $\tilde{N}$-i\`eme de $u$. 

\subsection{Lemmes techniques}

Nous aurons besoin des deux r\'esultats techniques suivants :

\begin{lem}[Principe du maximum] \label{lemmaxprinc}
Soit $T \subset \BC\TC$ un sous-complexe simplicial connexe, une fonction $f=\sum_{i=0}^{\tilde{N}-1}f_i t^i$ de $\tilde{\Sigma}^1_{L,T}$ est \`a puissance born\'ee (respectivement topologiquement nilpotente) si et seulement si, pour tout $i$, $u^i f^{\tilde{N}}_i$ l'est sur $\H^d_{L,T}$ 
si et seulement si pour tout sommet $s$ de $T$, $f$ l'est sur $\tilde{\Sigma}^1_{L,s}$.
\end{lem}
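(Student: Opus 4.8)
La stratégie consiste à tout traduire en termes de normes spectrales. Sur un affinoïde réduit, une section est à puissance bornée (resp. topologiquement nilpotente) si et seulement si sa norme spectrale $\|\cdot\|$ vérifie $\|\cdot\|\le 1$ (resp. $\|\cdot\|<1$), cette norme étant puissance-multiplicative. Pour les tubes $\H^d_{L,T}$ et $\tilde{\Sigma}^1_{L,T}$, qui ne sont affinoïdes que lorsque $T$ est fini, on raisonne via l'exhaustion par les sous-complexes finis $T'\subseteq T$ : une section est à puissance bornée (resp. top. nilpotente) si et seulement si elle l'est sur chaque $\tilde{\Sigma}^1_{L,T'}$. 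Je ramènerais alors les trois conditions à deux formules de normes, l'une sur le revêtement (orthogonalité de Kummer), l'autre sur la base (principe du maximum).

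Commençons par l'orthogonalité. Comme $\tilde{N}$ est premier à $p$ et que $\mu_{\tilde{N}}\subseteq\breve{K}\subseteq L$, le groupe de Galois $\mu_{\tilde{N}}$ de $\tilde{\Sigma}^1_L/\H^d_L$ agit par isométries via $t\mapsto\zeta t$, et le projecteur isotypique $f_j t^j=\frac1{\tilde{N}}\sum_{\zeta\in\mu_{\tilde{N}}}\zeta^{-j}(\zeta\cdot f)$ donne, par ultramétrie et $|\tilde{N}|=1$, l'égalité $\|f\|=\max_{0\le i\le\tilde{N}-1}\|f_i t^i\|$. Puisque $t^{\tilde{N}}=u$ et que la norme est puissance-multiplicative, $\|f_i t^i\|^{\tilde{N}}=\|u^i f_i^{\tilde{N}}\|$, de sorte que \[\|f\|^{\tilde{N}}=\max_{0\le i\le\tilde{N}-1}\|u^i f_i^{\tilde{N}}\|_{\H^d_{L,T}}.\] L'équivalence entre la condition portant sur $f$ et celle portant sur les $u^i f_i^{\tilde{N}}$ (la deuxième de l'énoncé) en découle aussitôt, pour les deux notions, en appliquant ce qui précède sur chaque affinoïde de l'exhaustion.

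Le point central est le principe du maximum sur la base : pour $g\in\Of(\H^d_{L,T})$, on veut $\|g\|_{\H^d_{L,T}}=\sup_{s}\|g\|_{\H^d_{L,s}}$, le sup portant sur les sommets $s$ de $T$. L'inégalité facile vient de $\H^d_{L,s}=\tau^{-1}(s)\subseteq\tau^{-1}(\sigma)=\H^d_{L,\sigma}$ pour $s\in\sigma$ ; l'inégalité difficile se ramène, via le recouvrement admissible de $\H^d_{L,T}$ par les tubes au-dessus des simplexes, à établir $\|g\|_{\H^d_{L,\sigma}}\le\max_{s\in\sigma}\|g\|_{\H^d_{L,s}}$ pour chaque simplexe $\sigma\subseteq T$. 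Pour cela j'utiliserais la décomposition $\H^d_{L,\mathring{\sigma}}\cong A_k\times C_\sigma$ de \ref{propdécompsimpouv} : en développant $g=\sum_{m\in\Z^k}g_m y^m$ en série de Laurent dans les coordonnées $y=(y_1,\dots,y_k)$ de la polycouronne (avec $g_m\in\Of(C_\sigma)$), la norme spectrale sur la fibre $\tau^{-1}(x)$ au-dessus d'un point $x\in\mathring{\sigma}$ de rayons $r=(r_1,\dots,r_k)$ vaut $\max_m\|g_m\|_{C_\sigma}\,r^m$. Comme fonction des $\log r_j$, qui sont des coordonnées affines sur le simplexe $|\sigma|$, elle est log-convexe ; or la polycouronne fermée $\{1\ge|y_1|\ge\dots\ge|y_k|\ge|\varpi|\}$ s'identifie au simplexe de sommets $s_0,\dots,s_k$, ses coins correspondant aux tubes $\H^d_{L,s_j}$. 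Le maximum d'une fonction log-convexe sur un simplexe étant atteint en un sommet, et la formule de la norme de Gauss étant continue jusqu'au bord (rayons dans $\{1,|\varpi|\}$), on obtient l'inégalité cherchée.

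Il reste à assembler. En appliquant ce principe du maximum aux fonctions $u^i f_i^{\tilde{N}}$ sur la base, puis l'orthogonalité au-dessus de chaque sommet, on obtient $\|f\|_{\tilde{\Sigma}^1_{L,T}}=\sup_s\|f\|_{\tilde{\Sigma}^1_{L,s}}$. L'équivalence entre la première et la troisième condition s'en déduit : pour la puissance bornée, $\|f\|_{\tilde{\Sigma}^1_{L,T}}\le 1$ équivaut à $\|f\|_{\tilde{\Sigma}^1_{L,s}}\le 1$ pour tout sommet $s$ ; pour la nilpotence topologique, on se restreint aux sous-complexes finis $T'$, sur lesquels le sup est un maximum fini, de sorte que $f$ est top. nilpotente sur $\tilde{\Sigma}^1_{L,T}$ si et seulement si elle l'est sur chaque $\tilde{\Sigma}^1_{L,T'}$, donc (cas fini) sur chaque $\tilde{\Sigma}^1_{L,s}$. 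Le principal obstacle me paraît être le principe du maximum, et plus précisément la mise en place soigneuse de la log-convexité de la norme de Gauss dans les coordonnées radiales ainsi que l'identification des coins de la polycouronne fermée aux sommets du simplexe ; une fois ceci acquis, l'orthogonalité de Kummer et la globalisation par exhaustion sont formelles.
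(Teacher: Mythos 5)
Votre traitement de la première équivalence (projecteurs isotypiques à coefficients dans $\Z[\frac{1}{\tilde{N}}]\subset\Z_p$, puis élévation à la puissance $\tilde{N}$ pour redescendre sur la base) est exactement celui du texte. Pour la seconde équivalence — le principe du maximum sur la base — vous suivez en revanche une route réellement différente. Le texte ne développe rien en série de Laurent : il invoque les résultats de Berkovich sur les morphismes pluri-nodaux pour garantir que $\hat{A}_\sigma=\Of^+(\H^d_{L,\sigma})$ et que la norme spectrale prend ses valeurs dans $|L^*|$, écrit $f=\lambda\tilde{f}$ avec $\Vert\tilde{f}\Vert=1$, puis observe que $\tilde{f}$ est non nulle dans la fibre spéciale $\bar{A}_\sigma$ et que la réunion des fibres spéciales des tubes au-dessus des sommets est dense dans $\H^d_{\kappa_L,\sigma}$ (ce sont des ouverts denses des composantes irréductibles $V(X_{d_{i_r}})$) ; donc $\tilde{f}$ reste non nulle modulo $\mG_L$ au-dessus d'un sommet, où la norme vaut encore $|\lambda|$. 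Cet argument traite d'un coup le tube fermé tout entier. Votre argument de log-convexité de la norme de Gauss aboutit aussi, mais il demande plus de soin que vous ne l'indiquez, et précisément au point que vous signalez vous-même comme l'obstacle principal : la décomposition produit $\H^d_{K,\mathring{\sigma}}\cong A_k\times C_\sigma$ ne vaut que sur la cellule \emph{ouverte}, de sorte que les coins de la polycouronne fermée ne s'identifient pas aux tubes $\H^d_{L,s_j}$ — la fibre $\tau^{-1}(s_j)$ est strictement plus grosse que l'ensemble $\{|y|=r^{\mathrm{coin}}\}\times C_\sigma$. Il faut donc, d'une part, traiter les faces du bord par récurrence sur la dimension du simplexe et, d'autre part, vérifier que la valeur limite de la formule de Gauss en un coin est \emph{majorée} par $\Vert g\Vert_{\H^d_{L,s_j}}$ ; c'est vrai, et l'inégalité tombe dans le bon sens justement parce que $\{|y|=r^{\mathrm{coin}}\}\times C_\sigma$ est contenu dans $\H^d_{L,s_j}$, mais ce n'est pas une identification. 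Une fois ces deux points mis au net, votre démonstration est complète ; l'approche du texte, par réduction à la fibre spéciale, évite entièrement cette discussion de bord, au prix de l'appel aux résultats de Berkovich.
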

Pour  $H$ un hyperplan $\F$-rationnel de $\P^d_\F$, on fixe $l_H : z \mapsto \sum_i a_i z_i$ une forme lin\'eaire telle que $H= \ker(l_H)$. On dit que $H= \infty$ si $l_H(z)=z_d$. Dans $\F[X_0, \dots, X_{d-1}]$, on note encore \[l_H=a_0X_0+ \dots+a_{d-1} X_{d-1}+a_d\] quand $H\neq \infty$. Cela revient à restreindre $\frac{l_H(z)}{z_d}$ à $\P^d_\F\backslash \infty=\spec (\F[X_0, \dots, X_{d-1}])$.

\begin{lem}\label{lemkernrmbarsigma1}
On consid\`ere le rev\^etement \[ \F[X_0, \dots, X_{d-1}, \frac{1}{P}] \to \F[X_0, \dots, X_{d-1}, \frac{1}{P}, t]/(t^{\tilde{N}}-P) \] o\`u $P= \prod_{H \in \P^d(\F_q) \backslash \infty} l_H$. Soit $v$ dans $\F[X_0, \dots, X_{d-1}, \frac{1}{P}, t]/(t^{\tilde{N}}-P)$, alors ${\rm Nrm}(v)=1$ si et seulement si $v$ est une racine $\tilde{N}$-i\`eme de l'unit\'e. 
\end{lem}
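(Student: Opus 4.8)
Le plan est de ramener l'énoncé à un calcul d'ordres d'annulation sur un modèle propre. Posons $R_0=\F[X_0,\dots,X_{d-1},\tfrac1P]$, $R=R_0[t]/(t^{\tilde{N}}-P)$ et notons $M=\F(X_0,\dots,X_{d-1})$ le corps des fractions de $R_0$, de sorte que $U:=\spec(R_0)=\A^d_\F\setminus\bigcup_{H\neq\infty}\{l_H=0\}$. Comme $\tilde{N}=|\P^d(\F_q)|$ est premier à $p$ et que $P=\prod_{H\neq\infty}l_H$ est un produit de $\tilde{N}-1$ formes linéaires \emph{distinctes}, on a $\ord_H(P)=1$, donc $P$ n'est une puissance $\ell$-ième dans $M$ pour aucun premier $\ell\mid\tilde{N}$. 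Par le même argument qu'en \ref{proppio0}, $t^{\tilde{N}}-P$ est alors irréductible sur $M$ : le revêtement est connexe, $E:=\mathrm{Frac}(R)=M[t]/(t^{\tilde{N}}-P)$ est une extension séparable de degré $\tilde{N}$ de $M$, et la même vérification après extension des scalaires à $\bar\F$ montre que $R\otimes_\F\bar\F$ reste intègre, c'est-à-dire que $\F$ est algébriquement clos dans $E$. Enfin $R$ est libre de rang $\tilde{N}$ sur $R_0$, et l'on identifie $\nrm$ à la norme $N_{E/M}(v)=\det(\text{multiplication par }v)$ (qui vaut bien $\prod_g g\cdot v$ dès que l'extension est galoisienne).

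L'implication directe est immédiate : si $v\in\mu_{\tilde{N}}(\F)$, alors $\nrm(v)=v^{\tilde{N}}=1$ puisque la norme d'une constante est sa puissance $\tilde{N}$-ième. Pour la réciproque, supposons $\nrm(v)=1$. Comme $\nrm(v)=\det(\text{multiplication par }v)\in R_0^*$, l'endomorphisme de multiplication par $v$ est inversible, donc $v\in R^*$ ; en particulier $v$ est une fonction inversible sur la variété affine lisse $V=\spec(R)$, et l'on dispose de $\ord_D(v)=0$ pour tout diviseur premier $D$ rencontrant $V$.

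Le cœur de la preuve consistera à montrer que $\mathrm{div}(v)=0$ sur un modèle propre. Je prendrais $\bar V$ la normalisée de $\P^d_\F$ dans $E$ : c'est une variété projective normale intègre de corps de fonctions $E$, et comme $\F$ est algébriquement clos dans $E$ on a $\Of^*(\bar V)=\F^*$. Tout diviseur premier de $\bar V$ domine un diviseur premier $\Delta$ de $\P^d_\F$. Si $\Delta$ n'est ni l'un des $H$ ($H\neq\infty$) ni $\infty$, alors $\Delta$ rencontre $U$, donc tout $D$ au-dessus de $\Delta$ rencontre $V$ et $\ord_D(v)=0$ par ce qui précède. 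Restent les hyperplans de ramification : comme $\ord_H(P)=1$ pour $H\neq\infty$ et $\ord_\infty(P)=-(\tilde{N}-1)$ sont premiers à $\tilde{N}$, le revêtement de Kummer $t^{\tilde{N}}=P$ est \emph{totalement ramifié} au-dessus de chacun d'eux, avec un unique diviseur $D$ au-dessus et degré résiduel $f(D/\Delta)=1$. La formule $\ord_\Delta(\nrm(v))=\sum_{D\mid\Delta}f(D/\Delta)\,\ord_D(v)$ donne alors $\ord_D(v)=\ord_\Delta(1)=0$ pour ces diviseurs également.

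On obtient ainsi $\ord_D(v)=0$ pour tout diviseur premier $D$ de $\bar V$ ; par normalité, $v$ et $v^{-1}$ sont réguliers sur $\bar V$ tout entier, d'où $v\in\Of^*(\bar V)=\F^*$, et l'égalité $1=\nrm(v)=v^{\tilde{N}}$ force $v\in\mu_{\tilde{N}}(\F)$. Le point technique principal sera la vérification de la ramification totale au-dessus des hyperplans de bord et du fait que $\F$ reste le corps des constantes de $E$ ; tout découle de ce que $P$ est sans facteur carré, ses ordres d'annulation étant premiers à $\tilde{N}$. De façon équivalente, on pourrait d'abord expliciter $R^*=\F^*\cdot t^{\Z}\cdot\prod_{H\neq\infty}l_H^{\Z}$ (soumis à $t^{\tilde{N}}=\prod_H l_H$) à l'aide du même calcul de diviseurs, puis évaluer $\nrm$ sur les générateurs, avec $\nrm(t)=\pm P$ et $\nrm(l_H)=l_H^{\tilde{N}}$, pour retrouver directement la description du noyau.
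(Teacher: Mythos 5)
Votre démonstration est correcte, mais elle emprunte une route sensiblement différente de celle du texte. Le papier construit à la main des « valuations » $v_H$ et $v_\infty$ directement sur l'anneau gradué $\bigoplus_i R_0\,t^i$, en posant $v_H(t^i f_i)=\tfrac{i}{\tilde N}+v_H(f_i)$, prouve leur multiplicativité en exploitant le fait que les parties fractionnaires $\tfrac{i}{\tilde N}$ séparent les composantes isotypiques (le minimum est atteint pour un unique indice $i$), puis conclut par un calcul direct : $v_H(u)=v_\infty(u)=0$ force les $u_i$ à être des polynômes, $v_\infty(u_i)>0$ les annule pour $i\neq 0$, et $u_0$ est une constante. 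Vous obtenez la même conclusion en géométrisant : normalisée $\bar{V}$ de $\P^d_{\F}$ dans $E$, diviseur de $v$ trivial hors du bord car $v\in R^*$, et trivial le long des diviseurs du bord grâce à la ramification totale (les ordres $1$ et $-(\tilde N-1)$ de $P$ sont premiers à $\tilde N$) combinée à la formule $\ord_\Delta(\nrm(v))=\sum_{D}f(D/\Delta)\,\ord_D(v)$ ; d'où $v\in\Of^*(\bar{V})=\F^*$ puis $v^{\tilde N}=1$. Les deux arguments reposent au fond sur les mêmes places — celles au-dessus des hyperplans $H$ et de l'infini, d'indice de ramification $\tilde N$ : vos valuations normalisées $\tfrac{1}{\tilde N}\ord_D$ sont exactement les $v_H$, $v_\infty$ du texte, et l'unicité de l'indice réalisant le minimum y est la traduction de la ramification totale. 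Votre version est plus conceptuelle et plus courte modulo les énoncés standards qu'elle invoque (finitude de la normalisation, Hartogs algébrique sur un schéma normal, compatibilité norme--valuation), tandis que celle du texte est entièrement autonome. Deux points seraient à expliciter dans une rédaction complète : (i) $\spec(R)$ est bien l'image réciproque de $U$ dans $\bar{V}$, c'est-à-dire que $R$ est la clôture intégrale de $R_0$ dans $E$ — cela découle de ce que $R/R_0$ est étale ($t$ et $\tilde N$ sont inversibles dans $R$), donc $R$ régulier donc normal ; (ii) comme dans le texte, l'identification de $\nrm$ avec la norme galoisienne et l'égalité $\nrm(c)=c^{\tilde N}$ pour une constante supposent $\mu_{\tilde N}\subset\F$, ce qui est satisfait dans l'application visée où le corps résiduel est $\bar{\F}_q$.
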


\begin{proof}[Preuve du lemme \ref{lemmaxprinc}]
Prouvons la première équivalence. Un sens est \'evident, prouvons l'autre. 
Soit $f= \sum_i f_i t^i$ une section sur $\tilde{\Sigma}^1_{L,T}$ avec $f_i$ une section de $\H^d_{ L,T}$. Chaque translat\'e de $f$ par le groupe de Galois du rev\^etement est \`a puissance born\'ee (respectivement topologiquement nilpotente) si et seulement si $f$ l'est. Mais la projection $f \mapsto f_i t^i$ est une combinaison lin\'eaire \`a coefficients dans $\Z[\frac{1}{\tilde{N}}]\subset\Z_p$ ($\tilde{N}$ premier à $p$) des translat\'es de $f$ et on en d\'eduit que $f$ est \`a puissance born\'ee (respectivement topologiquement nilpotente) si et seulement si chaque $f_it^i$ l'est si et seulement si chaque $f_i^{\tilde{N}} u^i=(f_i t^i)^{\tilde{N}}$ l'est. On s'est ramené au niveau 0 car $f_i^{\tilde{N}} u^i$ est dans $\Of(\H^d_{ L,T})$. 

Intéressons-nous maintenant à la seconde équivalence. D'après la discussion précédente, il suffit de montrer que, pour une fonction $f$ de $\H^d_{ L,T}$, le maximum de la norme spectrale est atteint sur un sommet. On peut alors raisonner sur 
$T= \sigma$ un simplexe. 
Même si la preuve qui va suivre est auto-contenue, elle s'inspire d'arguments que l'on peut retrouver dans \cite[Proposition 3.8.]{J2}. La flèche suivante est un morphisme pluri-nodal au sens de \cite[Definition 1.1]{ber7}
\[\OC_L\to\hat{A}_\sigma =\OC_L\langle X_0,\cdots, X_d, \frac{1}{P_\sigma} \rangle /(X_{d_0}\cdots X_{d_k}-\varpi))\] avec $k$ la dimension de $\sigma$. D'après \cite[Proposition 1.4]{ber7}, on a $\hat{A}_\sigma=\Of^+(\H^d_{ L,\sigma})$ et la norme spectrale sur $\H^d_{ L,\sigma}$ prend ses valeurs dans $|L^*|$. Ainsi, toute fonction $f\in {A}_\sigma$ se décompose sous la forme $\lambda \tilde{f}$ avec $\lambda$ une constante telle que $\left\Vert f\right\Vert_{\H^d_{ L,\sigma}}=|\lambda|$. En particulier, on observe les différentes relations $\left\Vert \tilde{f}\right\Vert_{\H^d_{ L,\sigma}}=1$ et $\Of^{++}(\H^d_{ L,\sigma})=\mG_L \hat{A}_\sigma$. Revenons à la preuve, il suffit de montrer que, pour une fonction $f=\lambda \tilde{f}\in {A}_\sigma$, on a $\tilde{f}\in \hat{A}_s\backslash \mG_L \hat{A}_s$ sur un sommet $s\in\sigma$ bien choisi. En effet dans ce cas, on aurait la suite d'égalité
\[\left\Vert f\right\Vert_{\H^d_{ L,s}}=|\lambda|=\left\Vert f\right\Vert_{\H^d_{ L,\sigma}}\]
qui montre que le maximum de la norme spectrale est atteint sur un sommet. Pour cela, on observe que la fonction $\tilde{f}$ est non nulle dans $\bar{A}_\sigma=\hat{A}_\sigma/ \mG_L \hat{A}_\sigma$. Les $(k+1)$ tubes au-dessus des sommets de $\sigma$ en fibre spéciale sont décrits de la manière suivante
\[\H^d_{ \kappa_L,s_r} =\spec (\bar{A}_\sigma [(\frac{1}{X_{d_{i_t}}})_{t\neq r}])=\spec (\bar{A}_{s_r})\]
quand $r$ parcourt $0,\cdots, k$. Ainsi, l'espace $\H^d_{ \kappa_L,s_r}$ est un ouvert dense de $V(X_{d_{i_r}})$ par irréductibilité. On en déduit  la densité de $\bigcup^k_{r=0}\H^d_{ \kappa_L,s_r}$ dans $\bigcup^k_{r=0}V(X_{d_{i_r}})=\H^d_{\kappa_L,\sigma}$. La fonction $\tilde{f}$ est donc non nulle sur un des ouverts $\H^d_{ \kappa_L,s_r}$ ce qui conclut la preuve.

\end{proof}

\begin{proof}[Preuve du lemme \ref{lemkernrmbarsigma1}]
Nous introduisons d'abord la notion d'ordre en un hyperplan. Prenons $H$ un hyperplan $\F$-rationnel. Comme $\F [X_0, \dots, X_{d-1}]$ est factoriel, tout polyn\^ome $Q$ s'\'ecrit comme un produit $\prod_i P_i^{\alpha_{P_i}}$ et on pose $v_H(Q)= \alpha_{l_H}$. On pose de m\^eme $v_{\infty}(Q)= -{\rm deg}(Q)$ o\`u ${\rm deg}$ d\'esigne le degr\'e total. Ces applications sont multiplicatives et v\'erifient l'in\'egalit\'e ultram\'etrique. Elles se prolongent de mani\`ere unique en des applications multiplicatives sur $\F[X_0, \dots, X_{d-1}, \frac{1}{P}]$. Nous expliquons comment les prolonger \`a $\F[X_0, \dots, X_{d-1}, \frac{1}{P}, t]/(t^{\tilde{N}}-P)$.

Prenons $f= \sum_i t^i f_i$, on pose dans un premier temps \[v_H(t^i f_i)= \frac{i}{\tilde{N}} + v_H(f_i)\et v_{\infty}(t^i f_i)=\frac{i(1-\tilde{N})}{\tilde{N}} + v_{\infty}(f_i).\] On remarque que cette d\'efinition est coh\'erente avec les identit\'es $v_H(P)= 1= \tilde{N} v_H(t)$ et $v_{\infty}(P)= 1-\tilde{N}= \tilde{N} v_{\infty}(t)$. De mani\`ere g\'en\'erale, on pose $v_H(f)= \min(v_H(t^if_i))$ et $v_{\infty}(f)= \min(v_{\infty}(t^if_i))$. Notons que $v_H(t^if_i)=\frac{i}{\tilde{N}}$ (idem pour $v_{\infty}$) dans $\Z[1/\tilde{N}]/\Z$ et sont donc tous différents dans $\Z[1/\tilde{N}]$ quand $i$ varie. Le minimum est donc atteint pour un unique indice $i$.

Les valuations $v_H$ et $v_{\infty}$ v\'erifient encore l'in\'egalit\'e ultram\'etrique et nous allons montrer qu'elles sont multiplicatives. 
Prenons $f= \sum f_i t^i$ et $g= \sum g_j t^j$, donnons-nous $i_0$, $j_0$ les uniques entiers pour lesquels $v_H(f)= v_H(f_{i_0}t^{i_0})$ et $v_H(g)= v_H(g_{j_0}t^{j_0})$. Par in\'egalit\'e ultram\'etrique, il suffit de montrer en d\'eveloppant le produit $fg$ que le minimum de $v_H(f_i t^i g_j t^j)$ est atteint uniquement pour le couple $(i_0, j_0)$. Si par exemple $i\neq i_0$, on a $v_H(f_i t^i) > v_H(f_{i_0} t^{i_0})$ car les valuations sont toutes diff\'erentes. On en d\'eduit que $v_H(f_it^i g_j t^j) > v_H(f_{i_0} t^{i_0} g_{j_0} t^{j_0})$. On raisonne de m\^eme pour $j \neq j_0$.  
On v\'erifie ais\'ement que $v_H$ et $v_{\infty}$ sont invariants sous le groupe de Galois du rev\^etement et on observe les \'egalit\'es $v_H( {\rm Nrm}(f))= \tilde{N} v_H(f)$ et $v_{\infty}( {\rm Nrm}(f))= \tilde{N} v_{\infty}(f)$.

Revenons \`a l'\'enonc\'e et prenons $u= \sum u_i t^i$ tel que de norme galoisienne $1$. D'apr\`es ce qui pr\'ec\`ede, $v_H(u)=0= v_{\infty}(u)$. On en d\'eduit $v_H(u_0)=0$ pour tout $H$ et $v_H(u_i) \ge 0$ pour $i \neq 0$. Ainsi, les $u_i$ sont des polyn\^omes. De m\^eme, $v_{\infty}(u_0)=0$ et $v_{\infty}(u_i) > 0$ pour tout $i \neq 0$. Ainsi, $u_i=0$ pour $i \neq 0$ et $u=u_0$ est une constante de norme galoisienne $1$; $u$ est donc une racine $\tilde{N}$-i\`eme de l'unit\'e. 
\end{proof}

\subsection{Conséquences}

Nous tirons des lemmes précédents les résultats suivants

\begin{coro}\label{corokernrmsigma1}
Soit $f$ une fonction sur $\tilde{\Sigma}^1_L$ telle que ${\rm Nrm}(f)=1$ alors $f$ est une racine $\tilde{N}$-i\`eme de l'unit\'e.  
\end{coro}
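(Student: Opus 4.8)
The plan is to show that the kernel of ${\rm Nrm}$ on $\Of^*(\tilde{\Sigma}^1_L)$ is exactly $\mu_{\tilde{N}}$, by first pinning down a ``semisimple part'' on the special fibre of each vertex via Lemma \ref{lemkernrmbarsigma1}, and then eliminating a residual ``unipotent part'' by a genuinely global argument. First I would observe that ${\rm Nrm}(f)=1$ forces $f$ to be a unit, with $f^{-1}=\prod_{g\neq 1}g\cdot f$, and that over each Galois fibre the relation $\prod_g|f(g^{-1}x)|=|{\rm Nrm}(f)(\pi x)|=1$ gives $\|f\|\geq 1$ and $\|f^{-1}\|\geq 1$ for the spectral norms. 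To obtain the reverse inequalities I would localise at a vertex $s$: there $\tilde{\Sigma}^1_{L,s}$ carries the smooth integral model $\spf(\hat{A}_s[t]/(t^{\tilde{N}}-u))$, whose special fibre $\bar{A}_s[t]/(t^{\tilde{N}}-\bar{u})$ is a domain, since $\bar u$ equals, up to a unit, the squarefree product $P=\prod_{H}l_H$, so that $t^{\tilde{N}}-\bar u$ is irreducible. Hence it has a unique, Galois-fixed Shilov point $\tilde{\eta}_s$ realising the spectral norm, and evaluating ${\rm Nrm}(f)=1$ there yields $|f(\tilde{\eta}_s)|^{\tilde{N}}=1$, so $\|f\|_s=\|f^{-1}\|_s=1$. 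By the maximum principle (Lemma \ref{lemmaxprinc}) these vertexwise bounds globalise to $f,f^{-1}\in\Of^+(\tilde{\Sigma}^1_L)$.

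Next I would reduce modulo $\mG_L$. On each vertex $\bar f\in\Of^*(\tilde{\Sigma}^1_{\kappa,s})$, and by the computation of $u$ in Lemma \ref{leminth1} its reduction is $\bar u=P$ up to a unit, so the special fibre is precisely the covering of Lemma \ref{lemkernrmbarsigma1} (base-changed to $\kappa\supseteq\bar{\F}_q$, where the valuation argument goes through verbatim). Since ${\rm Nrm}(\bar f)=\overline{{\rm Nrm}(f)}=1$, that lemma forces $\bar f$ to be a $\tilde{N}$-th root of unity, hence a constant $\zeta_s\in\mu_{\tilde{N}}(\kappa)$. Because $\tilde{\Sigma}^1_L=\H^d_L(u^{1/\tilde{N}})$ is connected (Proposition \ref{proppio0}, using that the level-one part $u_1=\prod_{H}l_H$ of $u$ has all exponents equal to $1$, so $u$ is not a proper power), all the $\zeta_s$ agree with a single $\zeta$. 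Lifting $\zeta$ to $\hat{\zeta}\in\mu_{\tilde{N}}(\OC_L)$ by Teichmüller ($\tilde{N}$ being prime to $p$) and replacing $f$ by $g:=\hat{\zeta}^{-1}f$, I am reduced to the statement: if $g\in\Of^{**}(\tilde{\Sigma}^1_L)$ and ${\rm Nrm}(g)=1$, then $g=1$.

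This last step is where the real difficulty lies, and it is irreducibly global. Indeed, on a single vertex the statement is \emph{false}: by Hilbert 90 every coboundary $y/\sigma(y)$ satisfies ${\rm Nrm}(y/\sigma(y))=1$, and on the affinoid $\tilde{\Sigma}^1_{L,s}$ there are many non-invariant units, for instance $g=(1+\varpi t)/(1+\varpi\zeta_0 t)$ lies in $\Of^{**}(\tilde{\Sigma}^1_{L,s})$, has trivial norm, and is not constant. Hence no purely fibrewise argument can succeed; the norm condition alone only controls the Galois-invariant part of $g$. The mechanism that rescues the global statement is that on the Stein space $\H^d_L$ the function $|u|$ is unbounded, whereas on a vertex it is identically $1$.

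Concretely, for the final step I would write $g=\sum_{i=0}^{\tilde{N}-1}g_it^i$ with $g_0\in\Of^{**}(\H^d_L)$ and $g_i\in\Of^{++}(\H^d_L)$ for $i\geq 1$ (the coefficient extraction being a $\Z[1/\tilde{N}]$-combination of Galois translates, as in the proof of Lemma \ref{lemmaxprinc}, hence compatible with reduction). By the first equivalence of Lemma \ref{lemmaxprinc} each $g_it^i$ is topologically nilpotent on \emph{all} of $\H^d_L$, that is $|g_i|\,|u|^{i/\tilde{N}}<1$ on every $\bar{U}_{n}$; since the linear forms $l_H$ acquire arbitrarily large order along the tower of hyperplanes $\HC_n$, the factor $|u|^{-i/\tilde{N}}$ decays and dominates $|g_i|$ with growing strength at the boundary. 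A Liouville-type growth argument on the Stein space — equivalently the explicit description of $\Of^*(\H^d_L)$ in Theorem \ref{theocohoanhdk}, which forbids a nonzero global section dominated by a vanishing power of $u$ — then forces $g_i=0$ for all $i\geq 1$ and $g_0=1$, whence $g=1$ and $f\in\mu_{\tilde{N}}$. The hard part will be exactly this global rigidity: controlling the growth of $u$ along the increasing family $(\bar{U}_n)_n$ and turning the vertexwise topological nilpotence into the vanishing of the higher coefficients. Everything preceding it is formal once Lemmas \ref{lemmaxprinc} and \ref{lemkernrmbarsigma1} and Theorem \ref{theoeqsigma1} are in hand.
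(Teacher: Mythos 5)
Your argument coincides with the paper's for the first half: the spectral norm of $f$ is computed vertex by vertex using the integrality of the special fibre of the smooth model (hence multiplicativity of the norm), and the resulting bound $\Vert f\Vert_{\tilde{\Sigma}^1_{L,s}}=1$ is globalised by Lemma \ref{lemmaxprinc}. The detour through Lemma \ref{lemkernrmbarsigma1} and the Teichmüller normalisation $g=\hat{\zeta}^{-1}f$ is harmless but unnecessary here (the paper reserves that lemma for Corollary \ref{corokernrmsigma1T}). The genuine gap is in the step you yourself flag as the hard part: killing the coefficients $g_i$ for $i\ge 1$. The mechanism you propose — the bound $|g_i|<|u|^{-i/\tilde{N}}$ combined with the growth of $|u|$ along the tower $(\bar{U}_n)_n$ — cannot succeed, because the functions it is supposed to exclude satisfy that bound identically: if $u^{i}g_i^{\tilde{N}}$ were a nonzero constant $c$ with $|c|<1$, one would have $|g_i|=|c|^{1/\tilde{N}}|u|^{-i/\tilde{N}}$ everywhere, which is perfectly compatible with topological nilpotence and with any boundary behaviour of $|u|$. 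Nor can you fall back on the description of $\Of^*(\H^d_L)$ in Theorem \ref{theocohoanhdk}, since a priori $g_i$ is not invertible.

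The paper closes this step with two inputs absent from your sketch. First, a genuine Liouville theorem for the Stein space $\H^d_L$ (\cite[lemme 3]{ber5}): a power-bounded global section is a constant. Applied to $u^{i}f_i^{\tilde{N}}$, which is power-bounded by Lemma \ref{lemmaxprinc}, this yields $u^{i}f_i^{\tilde{N}}=c\in L$. Second, the dichotomy that rules out $c\neq 0$: if $c\neq 0$ for some $i\neq 0$, then $f_i$ is invertible and $u^{i}\in L^*\bigl(\Of^*(\H^d_L)\bigr)^{\tilde{N}}$, contradicting the geometric connectedness of $\tilde{\Sigma}^1_L$ via Proposition \ref{proppio0}; hence $c=0$, $f_i=0$ for $i\neq 0$, and $f=f_0$ is a constant of Galois norm $1$, i.e.\ a $\tilde{N}$-th root of unity. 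Your observation that the statement fails over a single vertex (Hilbert~90 coboundaries such as $(1+\varpi t)/(1+\varpi\zeta_0 t)$) is correct and explains why \ref{corokernrmsigma1T} is weaker, but the global rigidity that rescues the present corollary is constancy of bounded functions plus connectedness, not the growth of $|u|$ at the boundary.
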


\begin{coro}\label{corokernrmsigma1T}
Si $T \subset \BC\TC$ est un sous-complexe simplicial connexe, $f \in \Of^*(\tilde{\Sigma}_{L,T}^1)$ telle que ${\rm Nrm}(f)=1$ alors \[f\in\Of^*(\H^d_{ L,T})(1+\Of^{++}(\tilde{\Sigma}_{L,T}^1)). \] 
\end{coro}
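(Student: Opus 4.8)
The plan is to adapt the proof of Corollary \ref{corokernrmsigma1} to the relative setting over $T$: I would run the vertex-by-vertex analysis to show that $f$ reduces to a constant $\tilde{N}$-th root of unity on each vertex, then glue these constants using the connectedness of $T$, and finally conclude with the maximum principle.

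First I would work over a single vertex $s$ of $T$. Since $\H^d_{\OC_{\breve{K}},s}$ is smooth, the cover $\tilde{\Sigma}_{L,s}^1=\H^d_{L,s}(u^{1/\tilde{N}})$ has good reduction: its spectral norm takes values in $|L^*|$ and its special fibre $\bar{R}_s=\bar{A}_s[t]/(t^{\tilde{N}}-\bar{u})$ is the integral domain appearing in Lemma \ref{lemkernrmbarsigma1} (after base change to $\kappa$), on which the Galois group $\mu_{\tilde{N}}$ acts by $t\mapsto \zeta t$. I would first check that $\|f|_s\|=\|f^{-1}|_s\|=1$: writing $f|_s=\lambda g$ with $\lambda\in L^*$, $\|g\|=1$ and $\bar{g}\neq 0$, the identity ${\rm Nrm}(g)=\lambda^{-\tilde{N}}$ would force $\overline{{\rm Nrm}(g)}={\rm Nrm}(\bar{g})$ to vanish whenever $|\lambda|>1$; but ${\rm Nrm}(\bar{g})$ is a product of the Galois conjugates of the nonzero element $\bar{g}$ in a domain, hence nonzero, so $|\lambda|=1$, and symmetrically for $f^{-1}$. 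Thus $f|_s$ admits a nonzero reduction $\bar{f}|_s\in \bar{R}_s^*$ with ${\rm Nrm}(\bar{f}|_s)=\overline{{\rm Nrm}(f)}=1$, and Lemma \ref{lemkernrmbarsigma1} gives $\bar{f}|_s=\zeta_s\in\mu_{\tilde{N}}(\kappa)$, a constant root of unity.

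Next I would patch. For an edge $\sigma=\{s_0,s_1\}$ of $T$, the special fibre $\H^d_{\kappa,\sigma}$ has the two irreducible components $V(x_0)$ and $V(x_d)$, meeting along the nonempty locus $V(x_0,x_d)$, and the vertex fibres $\H^d_{\kappa,s_r}$ are dense in them (as in the proof of Lemma \ref{lemmaxprinc}). The reduction $\bar{f}$ is then a single function on the reduced special fibre of $\tilde{\Sigma}_{L,\sigma}^1$, constant equal to $\zeta_{s_0}$ on the component over $s_0$ and to $\zeta_{s_1}$ on the one over $s_1$; since these components meet along the nonempty cover of $V(x_0,x_d)$ (where $\bar{u}$ is still invertible), equality there forces $\zeta_{s_0}=\zeta_{s_1}$. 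Because $T$ is a connected simplicial complex, propagating this equality along edges shows that $\zeta_s=\zeta$ is independent of $s$.

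Finally, since $\tilde{N}$ is prime to $p$ and $\mu_{\tilde{N}}\subset\breve{K}\subset L$, I would lift $\zeta$ to a root of unity $\tilde{\zeta}\in\mu_{\tilde{N}}(\OC_L)\subset\Of^*(\H^d_{L,T})$. Then $f/\tilde{\zeta}$ reduces to $1$ on every vertex fibre, i.e.\ $f/\tilde{\zeta}-1$ is topologically nilpotent on each $\tilde{\Sigma}_{L,s}^1$; by the maximum principle (Lemma \ref{lemmaxprinc}) it is topologically nilpotent on all of $\tilde{\Sigma}_{L,T}^1$, whence $f=\tilde{\zeta}\,(1+(f/\tilde{\zeta}-1))\in\Of^*(\H^d_{L,T})(1+\Of^{++}(\tilde{\Sigma}_{L,T}^1))$. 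I expect the patching step to be the main obstacle, since it requires knowing that the components of the special fibre over an edge are glued along a nonempty locus carrying the cover; the norm-one computation at a vertex, relying on the integrality of the reduced special fibre and on the Galois action, is the other delicate point.
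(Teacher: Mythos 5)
Your vertex-by-vertex analysis and your concluding appeal to Lemma \ref{lemmaxprinc} are sound and agree with the paper's; the genuine gap is in the patching step, which as written rests on a false geometric picture. Since $\tau^{-1}(s_0)\cap\tau^{-1}(s_1)=\emptyset$ for distinct vertices, the comparison of $\zeta_{s_0}$ and $\zeta_{s_1}$ must take place on the tube over the closed edge $\sigma=\{s_0,s_1\}$. But over that tube the covering $\tilde{\Sigma}^1_{L,\sigma}=\H^d_{L,\sigma}(u^{1/\tilde{N}})$ does \emph{not} extend to a $\mu_{\tilde{N}}$-torsor over the formal model $\spf(\hat{A}_\sigma)$: the order of $u$ along the irreducible components of the special fibre is not divisible by $\tilde{N}$ (for a maximal simplex, the computation of Lemme \ref{lemeqsigsig} gives $\ord_{V(x_i)}(u)\equiv 1+q+\cdots+q^{i}\pmod{\tilde{N}}$, nonzero for $i<d$; for the type-$(1,d)$ edge the paper's order computation at the end of la section 3 gives order $1$ along $V(x_0)$), so $\bar{u}$ vanishes on a component and no modification of $u$ by $\tilde{N}$-th powers of units can make it a unit of $\hat{A}_\sigma$. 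Your parenthetical ``where $\bar u$ is still invertible'' is therefore false: there is no reduced special fibre of the cover over the edge made of two components glued along a nonempty locus carrying the $\mu_{\tilde{N}}$-cover --- the model degenerates precisely over $V(x_0)\cap V(x_d)$, which is why semistable models of $\Sigma^1$ are a nontrivial problem in the first place.

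The step can be repaired by descending to the base before patching: writing $f=\sum_i t^i f_i$, your vertex result says $f_0\equiv\zeta_s\pmod{\Of^{++}(\H^d_{L,s})}$ and $t^if_i\in\Of^{++}(\tilde{\Sigma}^1_{L,s})$ for $i\neq 0$; the base $\H^d_{L,\sigma}$ \emph{does} have a model whose special fibre has two integral components containing the vertex fibres as dense opens and meeting nontrivially (exactly as in the proof of Lemma \ref{lemmaxprinc}), so $\bar{f}_0$ equals $\zeta_{s_0}$ and $\zeta_{s_1}$ on the respective components and hence $\zeta_{s_0}=\zeta_{s_1}$ on their intersection. The paper in fact dispenses with patching altogether: by the maximum principle $f-f_0=\sum_{i\neq 0}t^if_i\in\Of^{++}(\tilde{\Sigma}^1_{L,T})$, hence $f_0=f\bigl(1-f^{-1}(f-f_0)\bigr)$ is simultaneously a unit (being $f$ times an element of $1+\Of^{++}$) and a function on the base, and $f=f_0\bigl(1+f_0^{-1}(f-f_0)\bigr)$ already has the required form without knowing that $f_0$ is constant. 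Your stronger conclusion --- that the unit can be taken to be a constant root of unity --- is true, but it requires the base-level patching just described, not the one you propose.
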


\begin{proof}[Preuve de \ref{corokernrmsigma1}]
On prend $f=\sum^{\tilde{N}-1}_{i=0} f_i t^i$ de norme galoisienne $1$, on raisonne sommet par sommet grâce au lemme \ref{lemmaxprinc}. Fixons $s$ dans $\BC\TC_0$, le tube $\tilde{\Sigma}^1_s$ admet un mod\`ele entier lisse sur $\OC_L$ de fibre sp\'eciale $\spec(\F[X_0, \dots, X_{d-1}, \frac{1}{P}, t]/(t^{\tilde{N}}-P))$ \cite[2.3.8]{wa}. Comme la fibre sp\'eciale est int\`egre, la norme spectrale est multiplicative et \[\Vert {\rm Nrm}(f) \Vert_{\tilde{\Sigma}^1_{L,s}}=\Vert f \Vert^{\tilde{N}}_{\tilde{\Sigma}^1_{L,s}}=1.\] 
On en d\'eduit que $f$ est \`a puissance born\'ee sur tout $\tilde{\Sigma}^1_L$ par \ref{lemmaxprinc} et chaque $u^i f_i^{\tilde{N}}$ est constante toujours d'apr\`es \ref{lemmaxprinc} et \cite[lemme 3]{ber5}. Si cette constante est non nulle quand  $i\neq 0$, alors $f_i$ est inversible sur $\H^d_{ L}$ et on a $u^i\in L^*(\Of^*(\H^d_{ L}))^{\tilde{N}}\subset (\Of^*(\H^d_{ C}))^{\tilde{N}}$ ce qui contredit l'hypothèse de connexité  géométrique (cf \ref{proppio0}) de $\tilde{\Sigma}^1_L$. Donc $f_i=0$ quand $i\neq 0$ et $f=f_0\in L^*$ de norme galoisienne $1$, c'est une racine $\tilde{N}$-ième de l'unité.
\end{proof}

\begin{proof}[Preuve de \ref{corokernrmsigma1T}]
Comme dans le raisonnement précédent, les restrictions de $f$ et $f^{-1}$ \`a chaque sommet sont de norme spectrale $1$. D'après \ref{lemmaxprinc}, on a $f, f^{-1}\in \Of^+(\tilde{\Sigma}^1_{ L,T})$ 
et il existe une racine de l'unit\'e $\lambda_s$ et une fonction $g_s$ dans $\Of^{++}(\tilde{\Sigma}^1_{ L,s})$ telles que $f= \lambda_s (1+g_s)$ au-dessus de chaque sommet $s$ d'apr\`es \ref{lemkernrmbarsigma1}. On écrit $f= \sum_i t^i f_i$, si bien que sur chaque sommet $s$, on a  $f_0-\lambda_s\in \Of^{++}(\H^d_{ L,s})$ et $t^i f_i\in \Of^{++}(\tilde{\Sigma}^1_{L,s})$ si $i\neq 0$. On a par principe du maximum, $f-f_0=\sum_{i\neq 0}t^i f_i\in \Of^{++}(\tilde{\Sigma}^1_{L,T})$.  Il s'ensuit que
\begin{align*}
f_0 & = f- (f-f_0) = f(1- f^{-1}(f-f_0))\in f\Of^{**}(\tilde{\Sigma}^1_{L,T})\cap\Of(\H^d_{ L,T})\subset (\Of^{+}(\H^d_{ L,T})^*.
\end{align*}
Ce dernier point entra\^ine que \[f= f_0(1+ f_0^{-1}(f-f_0))  \in f_0\Of^{**}(\tilde{\Sigma}^1_{L,T})\] et on a exhib\'e la d\'ecomposition voulue. 
\end{proof}

\subsection{Fin de la preuve}

Revenons \`a la preuve du th\'eor\`eme principal. Soit $v$ une section inversible de $\tilde{\Sigma}^1_L$ et consid\'erons $\frac{v^{\tilde{N}}}{{\rm Nrm}(v)}$. Elle est  de norme galoisienne 1, c'est une racine $\tilde{N}$-i\`eme de l'unit\'e $\zeta$ d'apr\`es \ref{corokernrmsigma1}. Ainsi on obtient \[ v^{\tilde{N}}=\zeta {\rm Nrm}(v) \in \Of^*(\H^d_{ L}). \] D'apr\`es cette relation, les translat\'es de $v$ par le groupe de Galois sont de la forme $\lambda v$ avec $\lambda$ une racine $\tilde{N}$-i\`eme de l'unit\'e par connexité  géométrique  de $\tilde{\Sigma}^1_L$. Ainsi, $v$ est un vecteur propre pour tous les automorphismes du rev\^etement. Mais les parties isotypiques sont de la forme $t^i \Of( \H^d_{ L})$ pour $0 \le i \le \tilde{N}-1$ donc il existe $w$ dans $\Of^*(\H^d_{ L})$ tel que $v= t^i w$. 

Prenons maintenant $v$ une section inversible $\tilde{\Sigma}^1_{L,T}$ et consid\'erons encore la section inversible $\frac{v^{\tilde{N}}}{{\rm Nrm}(v)}$ de norme galoisienne $1$. On l'\'ecrit $f_0(1+g)=f_0(1+ \tilde{g})^{\tilde{N}}$ comme dans \ref{corokernrmsigma1T}. On a alors \[ (\frac{v}{1+ \tilde{g}})^{\tilde{N}}= f_0 {\rm Nrm}(v) \in \Of^*(\H^d_{ L,T}). \] L'argument pr\'ec\'edent sur les parties isotypiques prouve l'existence de $i$ et de $w$ dans $\Of^*(\H^d_{ L,T})$ tel que \[ v=t^i w (1+ \tilde{g}). \]

\section{Description de $\XG[\Pi_D]$ au-dessus d'un simplexe maximal \label{secmax}}


Dans cette section, nous chercherons à approfondir la description de $\XG[\Pi_D]$ établie dans \ref{theoniventiersigmasommet}. En effet, nous avons seulement pu étudier ce schéma en groupe au-dessus d'un sommet de l'immeuble de Bruhat-Tits qui ne correspond qu'à quelques ouverts de $\H^d_{\OC_{\breve{K}}}$. Toutefois, on a le recouvrement par les simplexes maximaux \[\H_{  \OC_{\breve{K}}}^d=\bigcup_{\sigma\in\BC\TC_d} \H_{ \OC_{\breve{K}},\sigma}^d\] et il suffit de comprendre la restriction $\XG[\Pi_D]_\sigma$ de  $\XG[\Pi_D]$ sur chacun des ouverts $\H_{  \OC_{\breve{K}},\sigma}^d=\spf(\hat{A}_\sigma)$. Pour cela nous allons utiliser la classification des schémas de Raynaud rappelée dans \ref{ssectionrayn} et la compréhension de la fibre générique obtenue grâce à l'équation \ref{theoeqsigma1}.

Nous rappelons (voir \ref{sssectionhdkcalcexp}) que l'anneau $\hat{A}_{\sigma}$ est la complétion p-adique de
\[\OC_{\breve{K}}[x_0, \dots, x_d, \frac{1}{\prod_{i\in\Z/(d+1)\Z}\prod_{a\in R_i} P_{a}}]/(x_0 \dots x_d- \varpi)\]
avec $P_a=  1+ \tilde{a}_{i-1} x_{i-1}+ \dots + \tilde{a}_{i-d} x_{i-1} \dots x_{i-d}$ pour $a=(  \tilde{a}_0, \dots,  \tilde{a}_{i-1},1, \varpi\tilde{a}_{i+1} , \dots, \varpi\tilde{a}_d) \in R_i$

Le résultat que nous allons prouver est le suivant :

\begin{theo}\label{theoxpid}
Soit $\sigma \subset \BC\TC$ un simplexe maximal, on a 
\begin{equation}
\label{eqxpidsimp} 
\XG[\Pi_D]_{\sigma} \cong \spf(\hat{A}_{\sigma} [y_0, \dots, y_d]/ \langle y_i^q-u_ix_{d-i}y_{i+1} \rangle_{i \in \Z/(d+1)\Z} ) 
\end{equation}
avec 
\begin{equation}
\label{equi}
u_i=\frac{\prod_{\tilde{b}\in R_{d-1-i}} P_{\tilde{b}}}{ \prod_{\tilde{b}\in R_{d-i}} P_{\tilde{b}}} \in \hat{A}_{\sigma}^*.
\end{equation}  
\end{theo}

\begin{rem}\label{remui}
\begin{itemize}
\item Le premier point \eqref{eqxpidsimp} d\'ecoule de la description de la fibre sp\'eciale (cf \ref{propxpidlib}) et a d\'ej\`a \'et\'e donn\'e par \cite{teit2} en dimension $1$ et \cite[2.3, 2.4]{wa} en dimension sup\'erieure. La description des $u_i$ \eqref{equi} proviendra de la description de la fibre g\'en\'erique \ref{lemeqsigsig}. Dans le cas particulier où $d=1$ et $K=\Q_p$, on retrouve le résultat \cite[Corollary 6.7.]{pan}.
\item Avec ce choix de représentants pour la famille $(u_i)_i$ (cf \eqref{equi}), on a $u_0\cdots u_d=1$.
\end{itemize}

\end{rem} 

\subsection{Section globale de $\XG[\Pi_D]_\sigma$ et preuve de \ref{theoxpid} \eqref{eqxpidsimp}}

Pour simplifier, nous écrirons $\hat{B}_{\sigma}= \Of(\XG[\Pi_D]_{\sigma})$ et $B_{\sigma}:= \hat{B}_{\sigma}[\frac{1}{p}]$. Notre but ici est de montrer l'existence d'une classe de fonctions inversibles $(u_i)_i$ pour lesquelles l'anneau $\hat{B}_{\sigma}$ peut s'écrire
\[\hat{B}_{\sigma}=\hat{A}_{\sigma} [y_1, \dots, y_d]/ \langle y_i^q-u_ix_{d-i}y_{i+1} \rangle_{i \in \Z/(d+1)\Z}.\]
Nous donnerons une expression à ces fonctions dans la section suivante. Le résultat découle essentiellement de \ref{propxpidlib} et de
\begin{prop}\label{lempic}

$\pic (\H_{  \OC_{\breve{K}},\sigma}^d)=0.$

\end{prop}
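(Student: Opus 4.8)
Le plan est de se ramener à la fibre spéciale, puis d'exploiter la décomposition de celle-ci en composantes irréductibles par un argument de type Mayer--Vietoris/\v{C}ech, exactement dans l'esprit de la preuve de \ref{coropic} dont ceci est la version détaillée pour les $d+1$ composantes. Tout d'abord, par \cite[3.7.4]{frvdp}, il suffit de prouver l'annulation du groupe de Picard de la fibre spéciale $X:=\H^d_{\bar{\F},\sigma}=\spec(\bar{A}_\sigma)$, avec $\bar{A}_\sigma=\bar{\F}[x_0,\dots,x_d,1/\bar{P}_\sigma]/(x_0\cdots x_d)$. Cette fibre spéciale se décompose en $d+1$ composantes irréductibles $V_i=V(x_i)$ pour $i\in\Z/(d+1)\Z$. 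Pour toute partie $I\subseteq\{0,\dots,d\}$, je poserais $V_I=\bigcap_{i\in I}V_i=V((x_i)_{i\in I})$, dont l'anneau des sections globales est $\bar{\F}[(x_j)_{j\notin I},1/\bar{P}_I]$, où $\bar{P}_I$ est la spécialisation de $\bar{P}_\sigma$ en $x_i=0$ ($i\in I$) : ce polynôme est non nul puisque chaque $P_a$ a pour terme constant $1$. C'est donc un localisé d'un anneau de polynômes, en particulier un anneau factoriel, ce qui donne $\pic(V_I)=0$ et une description explicite de $\Of^*(V_I)$ (produits d'une constante de $\bar{\F}^*$ par des puissances des facteurs irréductibles des formes linéaires inversées). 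Pour $I=\{0,\dots,d\}$ on obtient simplement $V_I=\spec(\bar{\F})$.

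Ensuite, je considérerais le complexe de \v{C}ech augmenté de faisceaux sur $X$
\[1\to\Of^*_X\to\bigoplus_{|I|=1}\iota_{I*}\Of^*_{V_I}\to\bigoplus_{|I|=2}\iota_{I*}\Of^*_{V_I}\to\cdots\to\iota_{I*}\Of^*_{V_{\{0,\dots,d\}}}\to 1,\]
les flèches étant les restrictions à signes alternés. Ce complexe est exact, ce qui se vérifie sur les fibres : en un point $x$ de $X$, notant $Z_x=\{i:x\in V_i\}$ l'ensemble des composantes passant par $x$, la suite des fibres est le complexe de cochaînes simpliciales augmenté du simplexe de sommets $Z_x$ à coefficients dans les groupes d'unités locales, lequel est exact car ce simplexe est contractile. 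L'injectivité à gauche vient de $\bigcap_{i\in Z_x}(x_i)=(\prod_{i\in Z_x}x_i)=0$ dans $\Of_{X,x}$ (les $x_i$, $i\notin Z_x$, étant inversibles localement) ; l'exactitude au milieu s'obtient en filtrant $\Of^*_{V_I,x}=\kappa(x)^*\times(1+\mG_{V_I,x})$, la partie constante redonnant le simplexe contractile et la partie $1+\mG$ se ramenant par graduation à l'exactitude additive analogue (qui se contrôle multidegré par multidegré, chaque multidegré ne voyant que le simplexe des composantes portant le monôme correspondant).

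La suite spectrale de ce recouvrement fermé, $E_1^{p,q}=\bigoplus_{|I|=p+1}{\rm H}^q(V_I,\Of^*)\Rightarrow{\rm H}^{p+q}(X,\Of^*)$, donne alors, puisque ${\rm H}^1(V_I,\Of^*)=\pic(V_I)=0$ par factorialité, que $\pic(X)={\rm H}^1(X,\Of^*_X)$ s'identifie à la cohomologie en degré $1$ du complexe des unités globales
\[\Of^*(X)\to\bigoplus_{|I|=1}\Of^*(V_I)\to\bigoplus_{|I|=2}\Of^*(V_I)\to\cdots.\]
De façon équivalente, on peut procéder par récurrence sur $m$ en appliquant la suite de Mayer--Vietoris aux deux fermés $X_{m-1}=V_0\cup\dots\cup V_{m-1}$ et $V_m$ d'intersection $\bigcup_{i<m}(V_i\cap V_m)$, comme dans \ref{coropic} ; à chaque étape l'annulation $\pic(X_{m-1})=\pic(V_m)=0$ ramène $\pic(X_m)$ au conoyau de la différence des restrictions sur les unités.

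Le point principal — le cœur combinatoire, qui généralise la trivialité du quotient $\Of^*(V_{0,d})/(\Of^*(V_0)\times\Of^*(V_d))$ dans \ref{coropic} — est donc de prouver l'exactitude de ce complexe d'unités globales au terme $\bigoplus_{|I|=1}\Of^*(V_I)$, ou de façon équivalente la surjectivité de la flèche de restriction sur les conoyaux. J'utiliserais pour cela la description factorielle des $\Of^*(V_I)$ : un $1$-cocycle est une famille compatible d'unités sur les intersections $V_{ij}$, et la compatibilité des factorisations des formes linéaires $l_H$ (les facteurs des $P_a$) lorsqu'on passe d'une strate $V_I$ à une strate voisine permet de le relever en une famille provenant d'unités globales sur les $V_i$. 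C'est précisément ce contrôle des ordres d'annulation des formes linéaires le long des différentes strates $V_I$, et leur recollement cohérent, que j'anticipe comme l'obstacle principal ; en revanche les annulations de Picard des strates et l'exactitude du complexe de faisceaux sont, quant à elles, formelles.
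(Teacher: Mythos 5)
Votre réduction à la fibre spéciale via \cite[3.7.4]{frvdp}, la décomposition en composantes $V_i=V(x_i)$, l'identification de $\pic(\H^d_{\bar{\F},\sigma})$ avec le ${\rm H}^1$ du complexe des unités globales $\bigoplus_{|I|=\bullet+1}\Of^*(V_I)$ (une fois acquises l'exactitude du complexe de faisceaux et les annulations $\pic(V_I)=0$ par factorialité) : tout cela suit fidèlement le squelette de la preuve du papier. Mais le point que vous désignez vous-même comme « le cœur combinatoire » — l'annulation de $\hhh^1$ du complexe des unités globales, c'est-à-dire le fait que tout $1$-cocycle $(c_{i,j})$ sur les $V_{\{i,j\}}$ provient d'unités sur les $V_i$ — n'est pas démontré : vous annoncez seulement que vous « utiliseriez la description factorielle » et le « contrôle des ordres d'annulation des formes linéaires le long des strates », et vous reconnaissez que c'est là l'obstacle principal. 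C'est une lacune réelle, car c'est précisément l'étape non formelle de l'énoncé, et la piste que vous esquissez (recoller des factorisations de formes linéaires strate par strate) est nettement plus délicate qu'il n'y paraît : il faut contrôler simultanément tous les facteurs irréductibles sur toutes les intersections, et rien ne garantit a priori que les choix locaux se recollent.

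Le papier contourne cette difficulté par un mécanisme beaucoup plus simple et que vous ne mentionnez pas : pour $I\subset J=I\amalg\{j\}$, l'endomorphisme d'anneaux qui envoie $x_j$ sur $0$ et fixe les autres variables fournit une \emph{section} $\varphi_{I,J}:\Of^*(V_J)\to\Of^*(V_I)$ de la restriction, et ces sections vérifient la condition de compatibilité $\varphi_{I,J}(\ker r_{J,J\cup\{k\}})\subset\ker r_{I,I\cup\{k\}}$. Le lemme combinatoire abstrait \ref{lemcomp} montre alors, par un argument de trivialisation progressive des cocycles (on tue les $c_{i,k}$ pour $k$ parcourant un ensemble $F$ croissant), que $\hhh^1$ du complexe s'annule — et le même lemme, appliqué aux anneaux locaux $\Of^*_{V_I,\pG}$, donne aussi l'exactitude du complexe de faisceaux au cran $|I|=2$, là où votre argument fibre à fibre par « contractibilité du simplexe » reste allusif (les groupes de coefficients varient avec $I$, et la réduction « multidegré par multidegré » de la partie $1+\mG$ demanderait d'être écrite). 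Pour compléter votre preuve, il vous faudrait soit produire ces rétractions et l'argument inductif, soit mener effectivement à bien le recollement des factorisations que vous laissez en suspens.
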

En effet, supposons dans un premier temps cette annulation. Ainsi, on peut \'ecrire d'après \ref{propxpidlib}
\[ \XG[\Pi_D]_{\sigma} \cong \spf(\hat{A}_{\sigma} [y_1, \dots, y_d]/ \langle y_i^q-v_iy_{i+1} \rangle_{i \in \Z/(d+1)\Z}).\]
On a par l'identité  \ref{propliexpi} et \eqref{diagrammeuniv}  
\[\Lf_{\chi_{j}}/d_{j-1}(\Lf_{\chi_{j-1}}^{\otimes p})\cong \begin{cases}(T_{d+1-i})^{\vee}/\Pi((T_{d+2-i})^{\vee}) \cong \Of/x_{d+1-i} \Of & \si j=fi\in f\Z/f(d+1),\\ 0 &\sinon\end{cases} \] pour tout $j \in \Z/f(d+1)\Z$ ie. $v_i=x_{d-i}u_i$  avec $u_{i} \in \hat{A}_{\sigma}^*$. On  en déduit le résultat

Revenons à ce calcul de groupes de Picard. D'apr\`es \cite[3.7.4]{frvdp}, il suffit de raisonner en fibre sp\'eciale $\H_{\bar{\F},\sigma}^d=\spec (\bar{\F}[x_0,\cdots,x_d,\frac{1}{P_{\sigma}}]/(x_0 \cdots x_d))$. On la recouvre par les composantes irr\'eductibles $\H_{\bar{\F},\sigma}^d=\bigcup_{0\le i\le d} V(x_i)=:\bigcup_{0\le i\le d} V_i$. Nous souhaitons justifier que nous pouvons raisonner à la Cech sur ce recouvrement par des fermés. Cet argument est relativement similaire à celui réalisé dans \ref{coropic} et cela découle du résultat technique qui va suivre. 

Introduisons quelques notations nécessaires. Soit $E$ un ensemble fini, nous confondrons $\PC (E)$ ordonné par l'inclusion avec la catégorie associée. Pour tout foncteur covariant $I\subset E \mapsto A_I$ de $\PC (E)$ vers la catégorie des groupes abéliens, on considère le complexe de cochaîne suivant :
\[\CC^\bullet ((A_I)_{I\subset E}): 0 \to \bigoplus_{i\in E}A_{\{i\}}\to \bigoplus_{\{i,j\}\subset  E}A_{\{i,j\}}\to\cdots\to \bigoplus_{\substack{I\subset E\\ |I|=k+1}}A_{I} \to\cdots\]
les différentielles étant données par les sommes alternées des morphismes $r_{I,J}:A_I\to A_J$ (pour $I\subset J$) définis par fonctorialité.

\begin{ex}

Soit un recouvrement ouvert $\UC=\{U_i\}_{i\in E}$ d'un espace topologique $X$ et $\Ff$ un faisceau en groupes abéliens sur $X$. Dans ce cas, $\CC^\bullet ((\Ff(\bigcap_{i\in I}U_i))_{I\subset E})$ est le complexe de Cech $\check{\CC}^\bullet (X,\UC,\Ff)$.

\end{ex}

\begin{lem}\label{lemcomp}

Reprenons l'ensemble fini $E$ et le foncteur $(A_I,r_{I,J})_{I\subset J\subset E}$ comme précédemment. Supposons l'existence, pour tous $I\neq \emptyset$ et $J:=I\amalg \{j\}\subset E$, d'une section $\varphi_{I,J}:A_J\to A_I$ de $r_{I,J}$ qui vérifie pour tout $k\notin J$,
\begin{equation}\label{eq:sec}
\varphi_{I,J}(\ker (r_{J,J\cup \{k\}}))\subset \ker (r_{I,I\cup \{k\}})
\end{equation}
alors $\hhh^1 (\CC^\bullet ((A_I)_{I\subset E}))=0$.

\end{lem}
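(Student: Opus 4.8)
Le plan est de raisonner par récurrence sur le cardinal de $E$ en isolant un élément distingué $e\in E$. On découpe le complexe $\CC^\bullet:=\CC^\bullet((A_I)_{I\subset E})$ selon que $e$ appartient ou non à $I$. Les sous-ensembles contenant $e$ forment un sous-complexe $\bar\CC^\bullet$ (la différentielle n'ajoute que des indices, donc préserve la présence de $e$), de terme $\bar\CC^k=\bigoplus_{|I|=k+1,\,e\in I}A_I$\,; le quotient $\tilde\CC^\bullet=\CC^\bullet/\bar\CC^\bullet$ s'identifie au complexe $\CC^\bullet((A_I)_{I\subset E'})$ associé à $E':=E\setminus\{e\}$, la différentielle induite se contentant d'oublier les composantes aboutissant à un sous-ensemble contenant $e$. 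On obtient ainsi une suite exacte courte de complexes $0\to\bar\CC^\bullet\to\CC^\bullet\to\tilde\CC^\bullet\to 0$. Les hypothèses (existence des sections $\varphi_{I,J}$ pour $I\neq\emptyset$ et condition \eqref{eq:sec}) se restreignent à $E'$, de sorte que $\hhh^1(\tilde\CC^\bullet)=0$ par récurrence. La suite exacte longue fournit alors $\hhh^1(\bar\CC^\bullet)\to\hhh^1(\CC^\bullet)\to\hhh^1(\tilde\CC^\bullet)$, et il suffira donc d'établir $\hhh^1(\bar\CC^\bullet)=0$.

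Le c\oe ur de la preuve est ce dernier point, que l'on démontre par récurrence sur $|E'|$. Soit $a=(a_i)_{i\in E'}$ un $1$-cocycle de $\bar\CC^\bullet$, avec $a_i\in A_{\{i,e\}}$\,; la condition de cocycle s'écrit $r_{\{i,e\},\{i,j,e\}}(a_i)=r_{\{j,e\},\{i,j,e\}}(a_j)$ pour tous $i\neq j$ dans $E'$. On fixe $m\in E'$. Par hypothèse de récurrence appliquée à $E'\setminus\{m\}$, il existe $c\in A_{\{e\}}$ tel que $r_{\{e\},\{i,e\}}(c)=a_i$ pour tout $i\in E'\setminus\{m\}$. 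Il reste à corriger $c$ pour traiter l'indice $m$. Posons $\epsilon:=a_m-r_{\{e\},\{m,e\}}(c)\in A_{\{m,e\}}$. En utilisant la fonctorialité $r_{\{m,e\},\{i,m,e\}}\circ r_{\{e\},\{m,e\}}=r_{\{i,e\},\{i,m,e\}}\circ r_{\{e\},\{i,e\}}$ puis la condition de cocycle pour la paire $\{i,m\}$, un calcul direct montre que $r_{\{m,e\},\{i,m,e\}}(\epsilon)=0$, c'est-à-dire $\epsilon\in\ker r_{\{m,e\},\{i,m,e\}}$, pour tout $i\in E'\setminus\{m\}$.

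On pose alors $c':=c+\varphi_{\{e\},\{m,e\}}(\epsilon)$. La propriété de section donne $r_{\{e\},\{m,e\}}(c')=r_{\{e\},\{m,e\}}(c)+\epsilon=a_m$, tandis que la condition \eqref{eq:sec} (avec $I=\{e\}$, $J=\{m,e\}$ et $k=i$) garantit $\varphi_{\{e\},\{m,e\}}(\epsilon)\in\ker r_{\{e\},\{i,e\}}$, de sorte que $r_{\{e\},\{i,e\}}(c')=a_i$ reste vérifié pour tout $i\in E'\setminus\{m\}$. Ainsi $c'$ résout simultanément toutes les équations, ce qui exhibe $a$ comme cobord et prouve $\hhh^1(\bar\CC^\bullet)=0$\,; les cas de base ($|E'|\le 1$) sont immédiats via $c=\varphi_{\{e\},\{m,e\}}(a_m)$. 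Le point délicat est précisément cette étape de correction : le choix naïf $c=\varphi_{\{e\},\{m,e\}}(a_m)$ ne résout les équations que modulo les noyaux $\ker r_{\{i,e\},\{i,m,e\}}$, et c'est exactement la compatibilité \eqref{eq:sec} entre sections et morphismes de restriction qui permet d'ajouter la correction sans détruire les composantes déjà ajustées.
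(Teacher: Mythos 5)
Votre preuve est correcte, mais elle suit une route réellement différente de celle du texte. L'article procède par une seule récurrence, directement sur les $1$-cocycles : pour $F\subset E$ on dit qu'un cocycle $(c_{i,j})$ vérifie $\Pf_F$ si $c_{i,k}=0$ dès que $k\in F$, et on montre qu'en posant $s_j=0$, $s_i=\varphi_{\{i\},\{i,j\}}(c_{i,j})$, le cocycle $c-\delta s$ vérifie $\Pf_{F\cup\{j\}}$ — la condition \eqref{eq:sec} combinée à la fermeture de $c$ garantissant que les colonnes déjà annulées le restent. Vous remplacez cela par un dévissage homologique : la suite exacte courte $0\to\bar\CC^\bullet\to\CC^\bullet\to\tilde\CC^\bullet\to 0$ isolant les parties contenant un élément distingué $e$, la suite exacte longue, une récurrence externe sur $|E|$ pour le quotient et une récurrence interne sur $|E'|$ pour établir $\hhh^1(\bar\CC^\bullet)=0$ en prolongeant une solution partielle $c\in A_{\{e\}}$ par la correction $\varphi_{\{e\},\{m,e\}}(\epsilon)$. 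Les deux arguments reposent au fond sur le même mécanisme (la compatibilité \eqref{eq:sec} permet de corriger sans détruire ce qui est déjà acquis, le terme de correction étant dans le bon noyau grâce à la condition de cocycle), et tous deux n'utilisent que les sections de source un singleton ; mais l'organisation diffère : la version de l'article est plus élémentaire et plus courte (pas de suite exacte de complexes à mettre en place, une seule récurrence), tandis que la vôtre est plus structurelle — elle identifie le sous-complexe « étoile de $e$ » comme le seul obstacle et se prêterait mieux à une généralisation aux $\hhh^k$ supérieurs si l'on disposait d'hypothèses de compatibilité adaptées. Un seul point mérite d'être explicité dans votre rédaction : l'identification du quotient $\tilde\CC^\bullet$ avec $\CC^\bullet((A_I)_{I\subset E'})$ doit tenir compte des signes de la différentielle, mais ceux-ci sont hérités sans modification puisque retirer $e$ ne change pas l'ordre relatif des autres indices.
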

\begin{coro}\label{coroh1c}

On fixe un point fermé $\pG\in \H_{\bar{\F},\sigma}^d$, on a \[\hhh^1(\CC^\bullet ((\Of^*(V_I))_{I\subset E}))=\hhh^1(\CC^\bullet ((\Of^*_{V_I,\pG})_{I\subset E_{\pG}}))=0\]
avec 
$E=\left\llbracket 0,d\right\rrbracket$, $E_{\pG}=\{i\in E: \pG \in V_i\}$ et $V_I=\bigcap_{i\in I} V_i$ pour $I\subset E$. 
\end{coro}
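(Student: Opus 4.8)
The plan is to deduce both vanishing statements from the combinatorial criterion of Lemma \ref{lemcomp}, applied to the two covariant functors $I \mapsto \Of^*(V_I)$ on $\PC(E)$ and $I \mapsto \Of^*_{V_I,\pG}$ on $\PC(E_\pG)$, the restriction maps $r_{I,J}$ being those induced by the quotient $x_j \mapsto 0$ which cuts $V_I$ down to $V_J = V_I \cap V(x_j)$ for $J = I \amalg \{j\}$. In each case the task is to exhibit sections $\varphi_{I,J}$ of $r_{I,J}$ satisfying the compatibility \eqref{eq:sec}; since every arrow in sight is ``set one more coordinate to zero'', the real point is to build these sections coherently on the whole poset so that they commute with all further restrictions.

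The local functor is the clean case. For $I \neq \emptyset$ the relation $\prod_m x_m = 0$ is already forced on $V_I$, so $V_I$ is a coordinate subspace and, after localising at a point $\pG$ where $P_\sigma(\pG)\neq 0$, the inverted factor $1/P_\sigma$ disappears and $\Of_{V_I,\pG}$ is simply the local ring at $\pG$ of $\spec \bar\F[x_m : m \notin I]$. I would take $\varphi_{I,J}$ to be the map of local rings induced by the subring inclusion $\bar\F[x_m : m \notin J] \hookrightarrow \bar\F[x_m : m\notin I]$, i.e.\ I view an $x_j$-free function as a function on $V_I$. This is a local homomorphism sectioning $r_{I,J}$, hence induces a section on unit groups; and since this inclusion and all the maps $r_{I,I\cup\{k\}}$ are ``include/kill distinct coordinates'' they pairwise commute, giving $r_{I,I\cup\{k\}}\circ \varphi_{I,J} = \varphi_{I\cup\{k\},J\cup\{k\}}\circ r_{J,J\cup\{k\}}$ and therefore \eqref{eq:sec} at once. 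Lemma \ref{lemcomp} then yields the vanishing of the local $\hhh^1$.

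For the global functor the same inclusion fails because of the factor $1/P_\sigma$, so I would first make the unit groups explicit, extending the computation at the end of the proof of \ref{theoniventiersigmasommet}. Each $V_I$ embeds, as in the case $|I|=1$ treated there via $V(x_d)\hookrightarrow \P^d_\F$, as the complement of the $\F$-rational hyperplanes in a coordinate-projective space over $\F$; by the order functions $\ord_H$ of \ref{lemkernrmbarsigma1} one gets that $\Of^*(V_I)/\bar\F^*$ is free abelian on the finite set of $\F$-rational hyperplanes meeting the stratum, the generators being the restrictions of the linear forms $l_H$ (equivalently the distinct irreducible factors of $\bar P_\sigma^{(I)}$). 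The map $r_{I,J}$ acts on these generators by sending each hyperplane $H$ either to the corresponding hyperplane of the smaller stratum or to a unit, according as $H$ does or does not contain $V_J$. I would then define $\varphi_{I,J}$ as the inclusion of free groups lifting each surviving hyperplane to the same global label; because the labelling is by one fixed set of hyperplanes, the resulting system is functorial in the poset and \eqref{eq:sec} reduces again to the commutation of restriction and lifting, after which Lemma \ref{lemcomp} applies.

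The main obstacle is exactly the bookkeeping of hyperplanes along these degenerations: one must check that setting $x_j = 0$ sends distinct $\F$-rational hyperplanes to distinct hyperplanes of the smaller stratum (or to a unit), i.e.\ that no new multiplicative relation among the reduced factors $\bar P_a$ is created when one more coordinate is killed. This non-collision statement is what makes the ``same-label'' lift simultaneously well defined and compatible, and it is the one genuinely geometric input; I would establish it from the explicit monomial form of the $P_a$ recorded in \ref{sssectionhdkcalcexp}, together with the factoriality arguments already used in \ref{coropic} and \ref{lempic}.
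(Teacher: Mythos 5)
Votre stratégie globale — construire des sections $\varphi_{I,J}$ vérifiant \eqref{eq:sec} puis invoquer le lemme \ref{lemcomp} — est exactement celle du texte, et votre traitement du foncteur local coïncide avec le sien : la section est induite par l'inclusion du sous-anneau des fonctions ne faisant pas intervenir les coordonnées tuées. Le problème est dans le cas global, que vous abandonnez au motif que le facteur inversé $1/P_\sigma$ ferait échouer cette inclusion. Ce n'est pas le cas, et c'est précisément le point clé de la preuve du texte : l'endomorphisme de $\bar{\F}[(x_i)_{i\notin I}]$ envoyant $x_j\mapsto 0$ pour $j\in J\setminus I$ transforme chaque facteur $P_a|_{V_I}$ en $P_{a'}|_{V_I}$ pour un autre indice $a'$ (tuer $x_j$ supprime simplement les monômes de $P_a$ contenant $x_j$, et le polynôme obtenu est encore de la forme $P_{a'}$ d'après les formules explicites de \ref{sssectionhdkcalcexp}, les coefficients de $R_i$ étant arbitraires). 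Il préserve donc la partie multiplicative définissant $\Of(V_I)$, se prolonge à la localisation, et la même section « relèvement sans $x_j$ » fonctionne mot pour mot pour $\Of^*(V_I)\to\Of^*(V_J)$, l'identité \eqref{eq:sec} s'obtenant comme dans votre cas local.

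La construction de remplacement que vous proposez comporte en revanche une lacune réelle. Vous affirmez que $\Of^*(V_I)/\bar{\F}^*$ est libre sur l'ensemble des hyperplans $\F$-rationnels rencontrant la strate et que la spécialisation $x_j=0$ envoie des hyperplans distincts sur des hyperplans distincts de la strate plus petite (ou sur des unités). Cet énoncé de non-collision est faux dès que $|J|\ge 2$ : une strate de codimension $\ge 2$ est une section linéaire propre, et des hyperplans distincts de $\P^d$ y découpent le même diviseur. Concrètement, pour $d=2$ sur le simplexe maximal, les facteurs $1+\tilde{a}_2x_2+\tilde{a}_1x_1x_2$ (pour $a\in R_0$) sont irréductibles et deux à deux non associés sur $V_{\{0\}}$, mais se spécialisent tous sur $V_{\{0,1\}}$ en $1+\tilde{a}_2x_2$ dès que $\tilde{a}_2$ est fixé et $\tilde{a}_1$ varie. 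Le groupe $\Of^*(V_{\{0,1\}})/\bar{\F}^*$ n'est donc pas libre sur les hyperplans rencontrant la strate, et le relèvement « à étiquette constante » n'est pas bien défini (plusieurs étiquettes pour un même facteur en bas). L'« obstacle principal » que vous repoussez en fin de preuve n'est donc pas un simple point de comptabilité : l'énoncé dont vous auriez besoin est faux, et la bonne issue est de revenir à la section par inclusion de sous-anneau, qui choisit canoniquement le relevé sans $x_j$ et vérifie \eqref{eq:sec}.
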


\begin{proof}\eqref{lemcomp}
Pour établir le résultat, nous prouvons que pour un $1$-cocycle $c=(c_{i,j})_{\{i,j\}\subset E}$ qui s'annule pour un certain nombre de couples $(i,j)$, on peut trouver un cocycle cohomologue $c-\delta s$ qui s'annule sur plus de couples. Nous pourrons conclure par finitude de l'ensemble de définition de $c$. 

Donnons un sens plus précis à cette affirmation.  Pour une partie $F\subset E$, on dit qu'un $1$-cocycle $c=(c_{i,j})_{i,j}$ vérifie la propriété $\Pf_{F}$ si et seulement si pour tout $k\in F$ et $i\in E\backslash\{k\}$, on a $c_{i,k}=0$. En particulier, tous les cocycles vérifient $\Pf_{\emptyset}$ et un cocycle vérifiant  $\Pf_{E}$ est nul. Il suffit de montrer que pour toute partie $F\subset E$, tout $j\notin F$ et un cocycle $c$ vérifiant $\Pf_{F}$, on peut trouver un cocycle cohomologue $c-\delta s$ vérifiant la propriété  $\Pf_{F\cup\{j\}}$ ou de manière équivalente, $c-\delta s$ vérifie $\Pf_{F}$ et $\Pf_{\{j\}}$. Le résultat se déduira par récurrence immédiate.

Donnons-nous $F$, $j$, $c$ comme précédemment et construisons la $0$-cochaîne voulue $s=(s_i)_i$. Posons $s_j=0$ et $s_i=\varphi_{\{i\},\{i,j\}}(c_{i,j})\in A_{\{i\}}$. En particulier, si $k\in F$, $c_{k,j}=0$ d'où \[s_k=\varphi_{\{k\},\{k,j\}}(c_{k,j})=0.\] Montrons que $c-\delta s$ convient.

$\bullet$ Comme $s_j=0$, on a pour tout $i\in E$ \[(\delta s)_{i,j}=r_{\{i\},\{i,j\}}(s_i)=r_{\{i\},\{i,j\}}(\varphi_{\{i\},\{i,j\}}(c_{i,j}))=c_{i,j}.\]
Donc,  $c-\delta s$ vérifie  $\Pf_{\{j\}}$.

$\bullet$ Fixons  $k\in F$ et $i\in E\backslash\{k\}$. Comme $s_k=0$, on a  comme précédemment \[(\delta s)_{i,k}=r_{\{i\},\{i,k\}}(s_i)=r_{\{i\},\{i,k\}}(\varphi_{\{i\},\{i,j\}}(c_{i,j})).\] Pour avoir $(\delta s)_{i,k}=c_{i,k}=0$, il suffit de vérifier  $c_{i,j}\in\ker(r_{\{i,j\},\{i,j,k\}})$ d'après \eqref{eq:sec}. Mais   par fermeture de $c$,  on a \[r_{\{i,j\},\{i,j,k\}}(c_{i,j})=r_{\{i,k\},\{i,j,k\}}(c_{i,k})-r_{\{j,k\},\{i,j,k\}}(c_{j,k})=0\]  car  $c$ vérifie  $\Pf_{F}$. Ainsi, $(\delta s)_{i,k}=c_{i,k}=0$ et $c-\delta s$ vérifie  $\Pf_{F}$ ce qui termine la preuve.

\end{proof}



\begin{proof}\eqref{coroh1c}
Il suffit de construire des sections aux projections naturelles qui vérifient l'identité \eqref{eq:sec}.
Pour toute partie $I\subset E$ (resp. $I\subset E_{\pG}$), on peut trouver des parties multiplicatives $S^{(I)}$ (resp. $S^{(I)}_{\pG}$) pour lesquels on a $\Of(V_I)=(S^{(I)})^{-1}\bar{\F}[(x_i)_{i\notin I}]$ (resp. $\Of_{V_I,\pG}=(S^{(I)}_{\pG})^{-1}\bar{\F}[(x_i)_{i\notin I}]$). Si on a  $I\subset J\subset E$ (resp. $I\subset J\subset E_{\pG}$), l'endomorphisme de l'anneau $\bar{\F}[(x_i)_{i\notin I}]$ \[x_i\mapsto \begin{cases}x_i&\si i\notin J\\
0 &\sinon\end{cases}\] préserve les parties $S^{(I)}$ (resp. $S^{(I)}_{\pG}$)  et  se prolonge donc en un endomorphisme $\psi$ sur $\Of(V_I)$ (resp. $\psi_{\pG}$ sur $\Of_{V_I,\pG}$). La projection modulo l'idéal $\left\langle x_j \right\rangle_{j\in J\backslash I}$ induit un isomorphisme ${\rm Im}(\psi)\iso \Of(V_J)$ (resp. ${\rm Im}(\psi_{\pG})\iso \Of_{V_J,\pG}$) et on peut construire par composition une section à $\Of(V_I)\to \Of(V_J)$ (resp. $\Of_{V_I,\pG}\to \Of_{V_J,\pG}$):
\[\Of(V_J)\iso{\rm Im}(\psi)\flinj \Of(V_I)\ {\rm (resp.}\ \Of_{V_J,\pG}\iso{\rm Im}(\psi_{\pG})\flinj \Of_{V_I,\pG}).\]
Cette dernière envoie l'idéal $x_k\Of(V_J)$ (resp. $x_k\Of_{V_J,\pG}$) sur l'idéal $x_k \Of(V_J)$ (resp. $x_k \Of_{V_I,\pG})$) pour $k\notin J$. Ainsi, les sections obtenues au niveau des fonctions inversibles   vérifient l'identité \eqref{eq:sec} et on en déduit les annulations voulues.

\end{proof}

Nous voulons maintenant prouver l'exactitude de la suite de faisceau 
\begin{equation}\label{eq:cechf}
0 \to \Of^*_{\H_{\bar{\F},\sigma}^d}\fln{(\delta^0)^*}{} \bigoplus_{i\in E}\iota^* \Of^*_{V_i} \fln{(\delta^1)^*}{} \bigoplus_{\{i,j\}\subset  E}\iota^* \Of^*_{V_{\{i,j\}}}\fln{(\delta^2)^*}{}\bigoplus_{\{i,j,k\}\subset  E}\iota^* \Of^*_{V_{\{i,j,k\}}}.
\end{equation}
Pour l'exactitude en $\bigoplus_{\{i,j\}\subset  E}\iota^* \Of^*_{V_{\{i,j\}}}$, on peut raisonner en chaque point fermé $\pG\in \H_{\bar{\F},\sigma}^d$ et cela découle de l'annulation $\hhh^1(\CC^\bullet ((\Of^*_{V_I,\pG})_{I\subset E_{\pG}}))=0$. Pour le reste de la suite, on s'intéresse d'abord à l'exactitude de
\[0 \to \Of_{\H_{\bar{\F},\sigma}^d}\fln{(\delta^0)^+}{} \bigoplus_{i\in E}\iota^* \Of_{V_i} \fln{(\delta^1)^+}{} \bigoplus_{\{i,j\}\subset  E}\iota^* \Of_{V_{\{i,j\}}}.\]
Par cohérence, il suffit de l'établir au niveau des sections globales où cela résulte de l'identité $I(V_{i})+I(V_{j})=I(V_{\{i,j\}})$ pour tous $i,j$. En observant les égalités ensemblistes \[{\rm Im}((\delta^0)^*)={\rm Im}((\delta^0)^+)\cap \Of^*\et \ker((\delta^n)^*)=\ker((\delta^n)^+)\cap \Of^*\] pour $n=0,1$, on en déduit l'exactitude de \eqref{eq:cechf}. 

La suite exacte longue associée à $0 \to \Of^*_{\H_{\bar{\F},\sigma}^d}\to \bigoplus_{i\in E}\iota^* \Of^*_{V_i}\to\ker((\delta^2)^*)\to 0$ entraîne l'exactitude de
\[\bigoplus_{i\in E}\Of^*(V_i)\to\ker((\delta^2)^*) (\H_{\bar{\F},\sigma}^d)\to \pic(\H_{\bar{\F},\sigma}^d) \to \bigoplus_{i\in E}\pic(V_i).\]
D'après \ref{coroh1c}, on a \[\ker((\delta^2)^*) (\H_{\bar{\F},\sigma}^d)/(\delta^1)^*(\bigoplus_{i\in E}\Of^*(V_i))=\hhh^1(\CC^\bullet ((\Of^*_{V_I})_{I\subset E}))=0.\] De plus, $\pic(V_i)=0$ car $V_i$ est le spectre d'une $\bar{\F}$-algèbre factorielle de type fini d'où $\pic(\H_{\bar{\F},\sigma}^d)=0$.



\subsection{Preuve de \ref{theoxpid} \eqref{equi} et conséquences}

Nous donnons maintenant l'expression des fonctions $(u_i)_i$ caractérisant le schéma en groupe $\XG[\Pi_D]_{\sigma}$. Cette famille d'éléments est unique modulo une relation d'équivalence décrite dans \ref{propxpidlib}. Un des points essentiels est de comprendre comment déterminer cette classe équivalence à partir de la géométrie du revêtement $\Sigma^1$. Pour cela, nous associons deux fonctions à toute famille de fonctions inversibles $(v_i)_i$,

\[V((v_i)_i)=v_0 v_{d}^q v_{d-1}^{q^2}\dots v_1^{q^d},\]\[\tilde{V}((v_i)_i)=x_0^{q-1} \dots x_{d-1}^{q^d-1}V((v_i)_i).\]

Nous pourront caractériser les classes d'équivalence recherchées grâce aux fonctions $V((u_i)_i)$, $\tilde{V}((u_i)_i)$. Plus précisément, les fonctions  $(u_i)_i$ sont uniques modulo la relation d'\'equivalence $\sim$ suivante 
\begin{align*}
(x_{d-i}u_i) \sim (x_{d-i} \tilde{u}_i) & \text{ ssi } \exists (w_i)_i \in \hat{A}_{\sigma}^*, \; \frac{u_i}{\tilde{u}_i}= \frac{w_i^q}{w_{i+1}} \\
& \text{ ssi } \exists w_1 \in \hat{A}_{\sigma}^*, \; w_1^NV((u_i)_i)= V((\tilde{u}_i)_i) \\ 
& \text{ ssi } \tilde{V}((u_i)_i) = \tilde{V}((\tilde{u}_i)_i)\pmod{({A}_{\sigma}^*)^N}
\end{align*}
avec $N=q^{d+1}-1=(q-1) \tilde{N}$. Dit autrement, on a des inclusions induites par $\tilde{V}$ et le morphisme de bord dans la suite exacte longue de Kummer $\kappa$


\[(\prod_{i \in \Z/(d+1)\Z} \hat{A}_{\sigma}^*)/\sim\  \flinj A_\sigma/(A_\sigma)^N\flinj\het{1}(\H_{ \breve{K}, \sigma}^d,\mu_N).\] 
Cette composition de flèches $(u_i)_i\fla \kappa(\tilde{V}((u_i)_i))$ a une interprétation géométrique ce qui nous permet de déterminer la famille $(u_i)_i$ grâce à la géométrie de $\Sigma^1_{\sigma}$. Plus précisément, on a \[ B_{\sigma}:= \hat{B}_{\sigma}[\frac{1}{p}]=A_{\sigma}[y_1]/ (y_1^{q^{d+1}}- \varpi \tilde{V}((u_i)_i)y_1).\]
Comme $\Sigma^1_{\sigma}= (\XG[\Pi_D]_{\sigma} \setminus \{0\})^{rig}$, on a \[ \Sigma^1_{\sigma}=\spg (B_\sigma [\frac{1}{y_1}])= \H_{ \breve{K}, \sigma}^d(( \varpi  \tilde{V}((u_i)_i))^{1/N}).\]  D'après cette observation, il suffit  de prouver

\begin{lem}\label{lemeqsigsig}

Pour un simplexe maximal $\sigma$ et $N=q^{d+1}-1$, on a l'identité
\[\Sigma^1_\sigma=\H_{ \breve{K}, \sigma}^d(( \varpi \tilde{V}((\frac{\prod_{\tilde{b}\in R_{d-1-i}} P_{\tilde{b}}}{ \prod_{\tilde{b}\in R_{d-i}} P_{\tilde{b}}} )_i))^{1/N}).\]
\end{lem}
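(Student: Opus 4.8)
The plan is to derive the identity by restricting the global equation of Theorem \ref{theoeqsigma1} to the chamber tube and then matching with the explicit function $\tilde V$. Writing $Q_k=\prod_{\tilde b\in R_k}P_{\tilde b}$ and $v_i=Q_{d-1-i}/Q_{d-i}$, I would first restrict $\Sigma^1=\H^d_{\breve K}((\varpi u^{(q-1)})^{1/N})$ to $\H^d_{\breve K,\sigma}$. Since $\pic(\H^d_{\OC_{\breve K},\sigma})=0$ by Proposition \ref{lempic}, the class of $\Sigma^1_\sigma$ in $\het{1}(\H^d_{\breve K,\sigma},\mu_N)$ is exactly the image of $\varpi u^{(q-1)}$ in $A_\sigma^*/(A_\sigma^*)^N$, so the lemma reduces to the single congruence $\tilde V((v_i)_i)\equiv u^{(q-1)}\pmod{(A_\sigma^*)^N}$.

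The local input is the identity $P_a=l_a(z)/z_{d_i}$ for $a\in R_i$: since $|R_i|=q^i$ it gives $\prod_{a\in R_i}l_a=z_i^{q^i}Q_i$, hence $\prod_{H\in\HC_1}l_H=\prod_i z_i^{q^i}Q_i$. Feeding this into the compatible family $(u_n)_n$ describing $u$, and using $z_i/z_j=\prod x_k^{\pm1}$, $\prod_i x_i=\varpi$ and $q^{d+1}\equiv 1\pmod N$, I would express $u^{(q-1)}\vert_\sigma$ as a product of the $Q_k$ and the $x_j$ with all exponents reduced mod $N$. In parallel, expanding the definitions of $V$ and $\tilde V$ on $v_i=Q_{d-1-i}/Q_{d-i}$ yields, after a telescoping computation, $\tilde V((v_i)_i)\equiv \prod_{j=0}^{d-1}x_j^{\,q^{j+1}-1}\prod_k Q_k^{\,q^{k+1}(q-1)}\pmod{(A_\sigma^*)^N}$ (each $Q_k$-exponent becoming $q^{k+1}(q-1)$ after reduction, the $x_j$-exponents matching directly); the telescoping also gives $\prod_i v_i=1$, consistent with $u_0\cdots u_d=1$ of Remark \ref{remui}.

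The delicate point, and the main obstacle, is that the chamber tube $\tau^{-1}(\sigma)$ lies in $\bar U_1$ but not in $\bar U_0$ (a point of a single chamber spreads the values of $\tau(z)$ on primitive vectors over at most one $\varpi$-step, so the bound holds for $\HC_2$ but equality fails for $\HC_1$). Consequently $u^{(q-1)}$ must be evaluated at the correct level of the family rather than naively at level one, and the class on the closed tube genuinely differs from the level-one expression by $N$-th powers. The heart of the computation is therefore to show that the strata products $Q_k$ — which are Moore/Dickson-type products of linear forms — obey Frobenius relations modulo $(A_\sigma^*)^N$ that promote the naive exponent $q-1$ to $q^{k+1}(q-1)$; this is precisely what reconciles the two sides. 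As a sanity check, for $d=1$ the two exponents already agree because $q^2\equiv 1\pmod N$, recovering the Teitelbaum case.

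As an alternative route that sidesteps the level bookkeeping, I would reduce to the vertices of $\sigma$. Each vertex tube $\H^d_{\breve K,s_r}$ sits inside $\H^d_{\breve K,\sigma}$, and $\XG[\Pi_D]_{s_r}$ is the localization of $\XG[\Pi_D]_\sigma$ at one coordinate, giving gluing relations between $u^{(s_r)}$ and the $u_i,x_j$ exactly in the spirit of Corollary \ref{corouni}, where the $q$-powers enter. Matching against the known vertex covering $u^{(s_r)}=\prod_H l_H^{q-1}$ (Theorem \ref{theoniventiersigmasommet}, transported by $\gln_{d+1}(\OC_K)$-equivariance) at all $d+1$ vertices, together with $u_0\cdots u_d=1$, pins down $(u_i)_i\sim(v_i)_i$; injectivity of restriction to all vertices follows from a \v Cech argument on the special fiber like Corollary \ref{coroh1c}. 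Either route concludes $\Sigma^1_\sigma=\H^d_{\breve K,\sigma}((\varpi\tilde V((v_i)_i))^{1/N})$.
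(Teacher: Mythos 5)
Your first route follows the paper's outline — restrict the global equation of \ref{theoeqsigma1} to $\H_{\breve{K},\sigma}^d$, use $\pic(\H_{\OC_{\breve{K}},\sigma}^d)=0$ to reduce everything to a congruence in $A_\sigma^*/(A_\sigma^*)^N$, and your telescoping computation of $\tilde{V}((v_i)_i)\equiv\prod_{j}x_j^{q^{j+1}-1}\prod_k Q_k^{q^{k+1}(q-1)}$ is correct. But the other half of the congruence, which you yourself call ``the heart of the computation'', is left as an assertion, and the one concrete identity you offer for it is false: for a maximal simplex each $R_i$ has $q^d$ elements (not $q^i$), so $\coprod_i R_i$ has $(d+1)q^d$ elements while $\HC_1\simeq\P^d(\F_q)$ has only $\tilde{N}$, and there is no identity $\prod_{H\in\HC_1}l_H=\prod_i z_i^{q^i}Q_i$. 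The forms $P_{\tilde{b}}$, $\tilde{b}\in R_i$, are not indexed by mod-$\varpi$ hyperplanes at all; they correspond to strata of $\HC_2=\P^d(\OC_K/\varpi^2)$.

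That observation is precisely the missing mechanism. Since $\H_{\breve{K},\sigma}^d\subset\bar{U}_{1,\breve{K}}$, the relevant representative of the class of $\Sigma^1$ is the level-two one, $\varpi\prod_{a\in\P^d(\OC_K/\varpi^2)}(l_{\tilde{a}}/z_d)^{q(q-1)}$. The paper then uses the bijection $(\tilde{b},\tilde{c})\in\coprod_i(R_i\times\F^i)\mapsto[\tilde{b}+\varpi\tilde{c}]\in\P^d(\OC_K/\varpi^2)$ together with $l_{\tilde{a}}/z_d\equiv(x_{i}\cdots x_{d-1})P_{\tilde{b}}\pmod{1+\varpi\hat{A}_\sigma}$ and $1+\varpi\hat{A}_\sigma\subset(A_\sigma^*)^N$: each $\tilde{b}\in R_i$ occurs with multiplicity $q^i$, which combined with the exponent $q(q-1)$ yields exactly $Q_i^{q^{i+1}(q-1)}$ and the stated powers of the $x_j$. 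It is this counting of lifts modulo $\varpi^2$ — not a ``Frobenius relation among Moore/Dickson products'' — that promotes $q-1$ to $q^{k+1}(q-1)$, and without it the proof does not close. Your alternative vertex route is likewise underdeveloped: it rests on the injectivity of $A_\sigma^*/(A_\sigma^*)^N\to\prod_r A_{s_r}^*/(A_{s_r}^*)^N$, which is not a covering statement (the vertex tubes do not cover the generic fibre of the simplex tube) and is not what Corollary \ref{coroh1c} provides.
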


\begin{proof}
On a $\H_{  \breve{K}, \sigma}^d\subset \bar{U}_{1,\breve{K}}$ et d'après \ref{theoeqsigma1}, $\Sigma^1|_{\bar{U}_{1,\breve{K}}}=\bar{U}_{1,\breve{K}}(V^{1/N})$ avec \[V=\varpi \prod_{a \in \P^d(\OC_K/\varpi^2)} (\frac{l_{\tilde{a}}(z)}{z_d})^{q(q-1)}  \]
pour $z=[z_0, \dots, z_d]$   des coordonn\'ees homog\`enes de $\P^d_{ K}$ adapt\'ees \`a $\sigma$. Prouver le résultat revient à montrer la congruence suivante :
\[ V \equiv \varpi \tilde{V}((\frac{\prod_{\tilde{b}\in R_{d-1-i}} P_{\tilde{b}}}{ \prod_{\tilde{b}\in R_{d-i}} P_{\tilde{b}}})_i)\pmod{ (A_{\sigma}^*)^N}.\]
Pour cela, réécrivons chaque $\frac{l_{\tilde{a}}(z)}{z_d}$ en fonction des variables $(x_i)_i$ de $\hat{A}_\sigma$. Nous aurons besoin de 
quelques conventions supplémentaires. 

 Tout hyperplan $a \in \P^d(\OC_K/\varpi^2)$ s'écrit comme le noyau d'une forme linéaire $\ker(l_{\tilde{a}})$ pour un unique vecteur $\tilde{a}\in\OC_K^{d+1}$ de la forme :
\[\tilde{a}=( \tilde{b}_{0}, \dots,  \tilde{b}_{i(a)-1}, 1, \varpi\tilde{b}_{i(a)+1},  \dots, \varpi\tilde{b}_{d})+(\varpi\tilde{c}_{0}, \dots,  \varpi\tilde{c}_{i(a)-1},0, \dots, 0)=\tilde{b}+\varpi\tilde{c}\]
avec  $\tilde{b}_j,\tilde{c}_j\in\mu_{q-1}(K)\cup\{0\}$ pour tout $j$ et $i(a)$ l'unique entier $i$ tel que $\tilde{a} \in M_{i} \setminus M_{i-1}$. 

Quand $a$ parcourt les éléments de $\P^d(\OC_K/\varpi^2)$ tel que    $i(a)=i_0$ pour $i_0$ fixé, les vecteurs $\tilde{b}$ décrivent un système de représentants $R_{i_0}$ défini dans \ref{paragraphbtgeosimpstd}. Dit autrement, la flèche :
\[(\tilde{b},c)\in\amalg_i (R_i \times \F^i)\fla [\tilde{b}+\varpi\tilde{c}]\in \P^d(\OC_K/\varpi^2)\] est une bijection qui envoie $ R_i \times \F^i$ sur $\{a \in \P^d(\OC_K/\varpi^2):i(a)=i\}$. 

Revenons à la description des fractions rationnelles $\frac{l_{\tilde{a}}(z)}{z_d}$ pour $\tilde{a}=\tilde{b}+\varpi\tilde{c}$. On rappelle les relations $x_d= \varpi \frac{z_d}{z_0}$, $x_i= \frac{z_i}{z_{i+1}}$ et $\frac{l_{\tilde{b}}}{z_{i}}=P_{\tilde{b}}(x)$ quand $\tilde{b}\in R_i$. On en déduit

\[ \frac{l_{\tilde{a}}}{z_d}\equiv\frac{z_{i(a)}}{z_d}\frac{l_{\tilde{b}}}{z_{i(a)}}\equiv (x_{i(a)} \dots x_{d-1})P_{\tilde{b}}(x) \pmod{1+\varpi\hat{A}_\sigma}. \] 
Ainsi, (on rappelle l'inclusion $1+\varpi\hat{A}_\sigma\subset  (A_\sigma^*)^N$) 
\begin{align*}
V & \equiv\varpi\prod_{a \in \P^d(\OC_K/\varpi^2)} [(x_{i(a)}\dots x_{d-1})P_{\tilde{b}}(x)]^{q(q-1)}\equiv  \varpi \prod_i \prod_{\substack{\tilde{b} \in R_{i} \\ (c_j) \in \F^{i}}} [ (x_{i} \dots x_{d-1})P_{\tilde{b}}(x))]^{q(q-1)} \\ 
& \equiv \varpi \prod_i x_i^{q^{d+2+i}-q^{d+1}} \prod_{\tilde{b}\in R_{i}} P_{\tilde{b}}^{q^{i+1}(q-1)}\equiv \varpi \prod_i x_i^{q^{i+1}-1} \left(\frac{\prod_{\tilde{b}\in R_{d-1-i}} P_{\tilde{b}}}{ \prod_{\tilde{b}\in R_{d-i}} P_{\tilde{b}}}  \right)^{q^{d+1-i}} \pmod{(A_\sigma^*)^N}.
\end{align*}
La dernière expression est exactement la quantité $\varpi \tilde{V}((\frac{\prod_{\tilde{b}\in R_{d-1-i}} P_{\tilde{b}}}{ \prod_{\tilde{b}\in R_{d-i}} P_{\tilde{b}}} )_i)$ ce qui termine la preuve.
\end{proof}

Nous terminons cette discussion par cette conséquence remarquable.

\begin{coro}
\label{corol0l1triv}
$\Lf_0 \otimes \dots \otimes \Lf_d$ est un fibré trivial sur $\H_{  \OC_{\breve{K}}}^d$.  
\end{coro}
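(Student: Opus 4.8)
The plan is to produce a nowhere-vanishing global section of $\Lf := \Lf_0 \otimes \dots \otimes \Lf_d$ by gluing the obvious local generators coming from Theorem~\ref{theoxpid}. Over a maximal simplex $\sigma$ we have $\XG[\Pi_D]_\sigma = \spf(\hat{A}_\sigma[y_0,\dots,y_d]/\langle y_i^q - u_i x_{d-i} y_{i+1}\rangle_i)$, where each $y_i$ is a generator of $\Lf_i|_{\H^d_{\OC_{\breve K},\sigma}}$. First I would set $r_\sigma := y_0 y_1 \cdots y_d$, which generates $\Lf|_{\H^d_{\OC_{\breve K},\sigma}}$, and check the functional equation $r_\sigma^q = \varpi\, r_\sigma$. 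This is a one-line computation: multiplying the $d+1$ relations gives $r_\sigma^q = \prod_i y_i^q = \big(\prod_i u_i\big)\big(\prod_i x_{d-i}\big)\, r_\sigma$, and the normalization $\prod_i u_i = 1$ of Remark~\ref{remui} together with $\prod_i x_i = \varpi$ (the defining relation of $\hat{A}_\sigma$) yields exactly $\varpi\, r_\sigma$.

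Next I would compare these generators on overlaps. For two maximal simplices the intersection $\H^d_{\OC_{\breve K},\sigma} \cap \H^d_{\OC_{\breve K},\sigma'} = \H^d_{\OC_{\breve K},\sigma\cap\sigma'}$ is the tube over their common face, which is connected (Proposition~\ref{propdécompsimpouv}), and $r_\sigma = \lambda_{\sigma\sigma'}\, r_{\sigma'}$ for a unit $\lambda_{\sigma\sigma'}$ of the base. Here the functional equation rigidifies the generators: on the generic fibre $r_\sigma$ is invertible (it is a unit on $\Sigma^1_{\sigma\cap\sigma'}$), so $r_\sigma^{q-1} = \varpi = r_{\sigma'}^{q-1}$ forces $\lambda_{\sigma\sigma'}^{q-1} = 1$; being a unit of a connected base with $q-1$ prime to $p$, $\lambda_{\sigma\sigma'}$ is a constant in $\mu_{q-1}$. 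Thus $\{\lambda_{\sigma\sigma'}\}$ is a \v{C}ech $1$-cocycle with values in the constant sheaf $\mu_{q-1}$ for the covering $\{\H^d_{\OC_{\breve K},\sigma}\}_{\sigma\in\BC\TC_d}$; already at this stage $\Lf^{\otimes(q-1)}$ is trivial, since its transition functions are the $\lambda_{\sigma\sigma'}^{q-1}=1$.

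It remains to trivialize $\Lf$ itself, i.e.\ to show this $\mu_{q-1}$-cocycle is a coboundary, and this is the crux. Conceptually it vanishes because the nerve of the covering by simplex tubes is homotopy equivalent to the contractible building $|\BC\TC|$, so $\hcech{1}$ with constant coefficients is zero. I would, however, carry this out through the generic fibre, which also yields the promised link with connected components. The cocycle $\{\lambda_{\sigma\sigma'}\}$ classifies the $\mu_{q-1}$-torsor over $\H^d_{\OC_{\breve K}}$ obtained from the relations $r_\sigma^{q-1}=\varpi$, i.e.\ the $\mu_{q-1}$-quotient cover of $\Sigma^1$. By Theorem~\ref{theoeqsigma1} this cover is globally $\H^d_{\breve K}((\varpi u^{q-1})^{1/N})$, and the global function $t_0$ with $t_0^{q-1}=1$, together with the Lagrange idempotents $\Lf_a(t_0)$ for $a\in\mu_{q-1}$, exhibits $\pi_0(\Sigma^1)\cong\mu_{q-1}$ as a split torsor (Corollary~\ref{coropi0}). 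A split $\mu_{q-1}$-torsor is the trivial class, so the cocycle is a coboundary: there exist constants $c_\sigma\in\mu_{q-1}$ with $c_\sigma r_\sigma = c_{\sigma'} r_{\sigma'}$ on overlaps, and the sections $c_\sigma r_\sigma$ glue to a global generator of $\Lf$. The main obstacle is exactly this identification, matching the abstract $(q-1)$-torsion class of $\Lf$ with the component structure of $\Sigma^1$, and bookkeeping the passage between the integral generators $r_\sigma$ on the base and the generic-fibre idempotents, the Lagrange interpolation polynomials being precisely what repackages the per-component constants into a single global section.
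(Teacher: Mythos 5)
Your first half reproduces the paper's proof: the local generators $r_\sigma=y_0\cdots y_d$, the functional equation $r_\sigma^q=\varpi r_\sigma$ obtained from $\prod_i u_i=1$ and $\prod_i x_i=\varpi$, and the rigidity argument ($f^q=f$ on a geometrically connected tube forces $f\in\mu_{q-1}(\breve{K})\cup\{0\}$) which makes the transition functions constant roots of unity — all of this is exactly what the paper does. The gap is at the step you yourself identify as the crux. You assert that the $\mu_{q-1}$-cocycle $\{\lambda_{\sigma\sigma'}\}$ is the class of the $\mu_{q-1}$-quotient cover of $\Sigma^1$ and that this class vanishes because $\pi_0(\Sigma^1)\cong\mu_{q-1}$ is a split torsor. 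Over $\breve{K}$ both claims are false: $\Sigma^1$ is connected over $\breve{K}$ (its $q-1$ geometric components are only separated by $t_0$ with $t_0^{q-1}=\varpi$, hence are permuted by ${\rm Gal}(C/\breve{K})$), and the quotient cover $\Sigma^1/\mu_{\tilde{N}}=\H^d_{\breve{K}}((\varpi u^{q-1})^{1/(q-1)})$ has the \emph{nonzero} class $\kappa(\varpi)$ in $\het{1}(\H^d_{\breve{K}},\mu_{q-1})$. Taken literally, your identification would compute the obstruction as $\kappa(\varpi)\neq 0$ and conclude that $\Lf_0\otimes\cdots\otimes\Lf_d$ is nontrivial.

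The torsor whose class actually equals your cocycle is the torsor of generators $Z$ of $\Lf_0\otimes\cdots\otimes\Lf_d$ satisfying $Z^q=\varpi Z$, and it differs from $\Sigma^1/\mu_{\tilde{N}}$ precisely by the twist that the paper makes explicit: on the generic fibre the maps $d_i$ are isomorphisms and $\pic(\H^d_{\breve{K}})=0$, so $(\Lf_0\otimes\cdots\otimes\Lf_d)[\frac{1}{\varpi}]\cong\Lf_1[\frac{1}{\varpi}]^{\otimes\tilde{N}}$ is trivialized by $Y_1^{\tilde{N}}$ with $Y_1^{N}=\varpi u^{q-1}$; writing $Z=WY_1^{\tilde{N}}$ turns $Z^q=\varpi Z$ into $(Wu)^{q-1}=1$, whose Kummer class is $\kappa(\varpi/(\varpi u^{q-1}))=\kappa(u^{-(q-1)})=0$ because $u$ is a \emph{global unit} — this cancellation of $\varpi$ against the $\varpi$ in Théorème \ref{theoeqsigma1} is the entire content of the corollary, and it is what the paper's global section $t_0=Y_1^{\tilde{N}}/u$ (which satisfies $t_0^q=\varpi t_0$, not $t_0^{q-1}=1$) encodes; its restriction to each simplex is then a $\mu_{q-1}$-multiple of your $r_\sigma$ by the rigidity you already established, hence a global integral generator. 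Your argument becomes correct once you either carry out this computation, or base-change to $C$ (harmless here since $\mu_{q-1}(C)=\mu_{q-1}(\breve{K})$, so a coboundary over $C$ descends) and there honestly match the solution torsor with a split one; as written, the vanishing of the obstruction class is asserted via an identification that does not hold.
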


\begin{proof}

Intéressons-nous d'abord à la fibre générique. On a encore un schéma de Raynaud $\XG[\Pi_D]^{rig}$ sur $\H_{  {\breve{K}}}^d$ qui est libre d'après \ref{theocohoanhdk} point 1. Il admet la décomposition en partie isotypique $\pi_*\Of_{\XG[\Pi_D]^{rig}}=\Of_{\H_{ {\breve{K}}}^d}\oplus \bigoplus_{\chi} \Lf_{\chi}[\frac{1}{\varpi}]$ où $\Lf_{\chi}$ sont les parties isotypiques de l'idéal d'augmentation  du schéma de Raynaud $\XG[\Pi_D]$ et $\Lf_{\chi}[\frac{1}{\varpi}]$ sont les fibrés induits en fibre générique. De plus, d'après \ref{remlierep} et \ref{propliexpi}, le morphisme induit par la multiplication $d_i :\Lf_{\chi_i}[\frac{1}{\varpi}]^{\otimes p}\iso \Lf_{\chi_{i+1}}[\frac{1}{\varpi}]$ est un isomorphisme de fibré en droite. On en déduit l'écriture d'après corollaire \ref{cororaynlib} et la note \eqref{foouni}
\[\XG[\Pi_D]^{rig}=\underline{\spg}_{\H_{ {\breve{K}}}^d}(\Of_{\H_{ {\breve{K}}}^d}[Y_1]/(Y_1^{q^d}-wY_1))\]
avec $w\in \Of^*(\H_{ {\breve{K}}}^d)$ et $Y_1$ un générateur de $\Lf_{1}[\frac{1}{\varpi}]$. Mais, on observe
\[\Sigma^1=\underline{\spg}_{\XG[\Pi_D]^{rig}}(\Of_{\XG[\Pi_D]^{rig}}[1/Y_1])=\H_{ {\breve{K}}}^d(w^{1/N})\]
et  la fonction inversible $w$ s'écrit sous la forme $\varpi u^{q-1}$ avec $u\in \Of^*(\H_{ {\breve{K}}}^d)$ d'après \ref{theoeqsigma1}. On introduit alors $t_0:=\frac{Y_1^{\tilde{N}}}{u}$. C'est un élément de $\Lf_{1}[\frac{1}{\varpi}]^{\otimes \tilde{N}}(\H_{  {\breve{K}}}^d)\cong (\Lf_0 \otimes \dots \otimes \Lf_d)[\frac{1}{\varpi}](\H_{ {\breve{K}}}^d)$ qui vérifie l'identité $t_0^q=\varpi t_0$. Il s'agit de voir que $t_0$ engendre $\Lf_0 \otimes \dots \otimes \Lf_d$.

Pla\c{c}ons-nous sur un simplexe maximal $\sigma$ et reprenons les donn\'ees $y_i$, $u_i$ de \ref{theoxpid}. Au-dessus de $\sigma$, $Z_{\sigma}=y_0 \dots y_d$ est un g\'en\'erateur de $\Lf_0 \otimes \dots \otimes \Lf_d$ qui v\'erifie d'après \eqref{equi} et \ref{remui} point 2.
\begin{align*}
Z_{\sigma}^q& = (u_0 x_1) (u_1 x_2) \dots (u_d x_0) Z_{\sigma} \\
&= \varpi u_0 \dots u_d Z_{\sigma}= \varpi Z_{\sigma}.
\end{align*}
\'Etudions les solutions de $Z^q= \varpi Z$ dans $\Lf_0 \otimes \dots \otimes \Lf_d[\frac{1}{\varpi}](\H_{ {\breve{K}}, \sigma}^d)\cong Z_{\sigma}\Of(\H_{  {\breve{K}},\sigma}^d)$ ie. celles de la forme  $Z_{\sigma} f$ avec $f\in A_\sigma$. On a \[\varpi Z_{\sigma} f=Z_{\sigma}^q f^q=\varpi Z_{\sigma} f^q\] et $f=f^q$ par unicité de l'écriture d'où $f \in \mu_{q-1}(\breve{K})\cup \{0\}$ ($\H_{  {\breve{K}}, \sigma}^d$ est géométriquement connexe). Ainsi, toutes les solutions non-nulles engendrent le faisceau ${\Lf_0 \otimes \dots \otimes \Lf_d}_{|\H_{  \OC_{\breve{K}}, \sigma}^d}$ et $t_0$ est donc un générateur global du faisceau $\Lf_0 \otimes \dots \otimes \Lf_d$.



\end{proof}

\begin{rem}

Ce résultat induit une décomposition de l'idéal d'augmentation $\IC=\bigoplus_{i=0}^{q-2}\IC_i$ où les termes \[\IC_i:=(\Lf_0 \otimes \dots \otimes \Lf_d)^{\otimes i} \bigoplus_{{\substack{(\alpha_j)_{j}\in \left\llbracket 0,q-1\right\rrbracket^{d}}}}\Lf^{{\otimes \alpha_1}}_1\otimes  \cdots\otimes \Lf^{{\otimes \alpha_d}}_d\] sont isomorphes deux à deux. C'est une manifestation en niveau entier du faite que $\Sigma^1$ admettent $q-1$ composantes connexes géométriques. En effet, $\Lf_0 \otimes \dots \otimes \Lf_d$ est engendré par une solution de l'équation $Z^q= \varpi Z$ et les racines du polynôme $Z^{q-1}= \varpi $ sont en bijection avec $\pi_0^{geo}(\Sigma^1)$.

\end{rem}

\newpage

\bibliographystyle{alpha}
\bibliography{eqsig_v1}

\end{document}